\newtheorem{Thm}{Theorem}[section]
\newtheorem{Lem}[Thm]{Lemma}
\newtheorem{Cor}[Thm]{Corollary}
\newtheorem{Prop}[Thm]{Proposition}
\newtheorem{Conj}[Thm]{Conjecture}
\newtheorem{Problem}[Thm]{Problem}
\newcommand{\A}{\mathbb{A}}
\newcommand{\Z}{\mathbb{Z}}
\newcommand{\Q}{\mathbb{Q}}
\newcommand{\N}{\mathbb{N}}
\newcommand{\C}{\mathbb{C}}
\newcommand{\df}{\colon}
\newcommand{\cA}{{\mathcal A}}
\newcommand{\cB}{{\mathcal B}}
\newcommand{\cC}{{\mathcal C}}
\newcommand{\cM}{{\mathcal M}}
\newcommand{\cN}{{\mathcal N}}
\newcommand{\cO}{{\mathcal O}}
\newcommand{\cP}{{\mathcal P}}
\newcommand{\cS}{{\mathcal S}}
\newcommand{\cT}{{\mathcal T}}
\newcommand{\cU}{{\mathcal U}}
\newcommand{\m}{\mathfrak{m}}
\newcommand{\ba}{\mathbf{a}}
\newcommand{\bb}{\mathbf{b}}
\newcommand{\bd}{\mathbf{d}}
\newcommand{\be}{\mathbf{e}}
\newcommand{\bff}{\mathbf{f}}
\newcommand{\bp}{{\mathbf p}}
\newcommand{\bq}{{\mathbf q}}
\newcommand{\bv}{{\mathbf v}}
\newcommand{\bx}{{\mathbf x}}
\newcommand{\LL}{\Lambda}
\newcommand{\rk}{\operatorname{rank}}
\newcommand{\md}{\operatorname{mod}}
\newcommand{\Mod}{\operatorname{Mod}}
\newcommand{\nil}{\operatorname{nil}}
\newcommand{\rep}{\operatorname{rep}}
\newcommand{\decrep}{\operatorname{decrep}}
\newcommand{\add}{\operatorname{add}}
\newcommand{\dimv}{\underline{\dim}}
\newcommand{\soc}{\operatorname{soc}}
\newcommand{\inj}{{\rm inj}}
\newcommand{\Hom}{\operatorname{Hom}}
\newcommand{\Ext}{\operatorname{Ext}}
\newcommand{\End}{\operatorname{End}}
\newcommand{\gEnd}{\operatorname{end}}
\newcommand{\gHom}{\operatorname{hom}}
\newcommand{\gExt}{\operatorname{ext}}
\newcommand{\length}{\operatorname{length}}
\newcommand{\Ima}{\operatorname{Im}}
\newcommand{\Ker}{\operatorname{Ker}}
\newcommand{\Coker}{\operatorname{Coker}}
\newcommand{\irr}{\operatorname{Irr}}
\newcommand{\decirr}{\operatorname{decIrr}}
\newcommand{\bsm}{\begin{smallmatrix}}
\newcommand{\esm}{\end{smallmatrix}}
\newcommand{\bbm}{\begin{matrix}}
\newcommand{\ebm}{\end{matrix}}
\newcommand{\bbsm}{\left(\begin{smallmatrix}}
\newcommand{\besm}{\end{smallmatrix}\right)}
\newcommand{\bbbm}{\left(\begin{matrix}}
\newcommand{\bebm}{\end{matrix}\right)}
\newcommand{\GL}{\operatorname{GL}}
\newcommand{\Gr}{\operatorname{Gr}}
\newcommand{\xra}{\xrightarrow}
\newcommand{\CQ}[1]{\C\langle\hspace{-0.05cm}\langle #1\rangle\hspace{-0.05cm}\rangle}
\newcommand{\up}{{\rm up}}
\newcommand{\sr}{{\rm s.r.}}
\newcommand{\alg}{{\rm alg}}
\newcommand{\dec}{{\rm dec}}
\begin{document}

\date{03.12.2012}

\title{Caldero-Chapoton algebras}

\author[G. Cerulli Irelli]{Giovanni Cerulli Irelli}
\address{Giovanni Cerulli Irelli\newline
Mathematisches Institut\newline
Universit\"at Bonn\newline
Endenicher Allee 60\newline
53115 Bonn\newline
Germany}
\email{cerulli@math.uni-bonn.de}

\author[D. Labardini-Fragoso]{Daniel Labardini-Fragoso}
\address{Daniel Labardini-Fragoso\newline
Mathematisches Institut\newline
Universit\"at Bonn\newline
Endenicher Allee 60\newline
53115 Bonn\newline
Germany}
\email{labardini@math.uni-bonn.de}

\author[J. Schr\"oer]{Jan Schr\"oer}
\address{Jan Schr\"oer\newline
Mathematisches Institut\newline
Universit\"at Bonn\newline
Endenicher Allee 60\newline
53115 Bonn\newline
Germany}
\email{schroer@math.uni-bonn.de}

\subjclass[2010]{Primary 13F60; Secondary 16G10, 16G20}


\begin{abstract}
Motivated by the 
representation theory of quivers with potential introduced by Derksen, Weyman and Zelevinsky and by work of 
Caldero and Chapoton, who gave explicit formulae for the 
cluster variables of cluster algebras of Dynkin type,
we associate a \emph{Caldero-Chapoton algebra} $\cA_\LL$
to any (possibly infinite dimensional) basic algebra $\LL$.
By definition, $\cA_\LL$ is (as a vector space) generated by the \emph{Caldero-Chapoton functions} $C_\LL(\cM)$ of the decorated representations $\cM$ of $\LL$.
If $\LL = \cP(Q,W)$ is the Jacobian algebra defined by a 
2-acyclic quiver $Q$ with non-degenerate potential $W$, then we have
$\cA_Q \subseteq \cA_\LL \subseteq \cA_Q^\up$, where
$\cA_Q$ and $\cA_Q^\up$ are the cluster algebra and
the upper cluster algebra associated to $Q$.
The set $\cB_\LL$ of generic Caldero-Chapoton functions
is parametrized by the strongly reduced components
of the varieties of representations of the Jacobian algebra $\cP(Q,W)$ and
was introduced by Geiss, Leclerc and Schr\"oer.
Plamondon parametrized the strongly reduced components
for finite-dimensional basic algebras.
We generalize this to arbitrary basic algebras.
Furthermore, we prove
a decomposition theorem for strongly reduced components.
We define $\cB_\LL$ for arbitrary $\LL$, and we conjecture that
$\cB_\LL$ is a basis of the Caldero-Chapoton algebra $\cA_\LL$.
Thanks to the decomposition theorem, all elements of $\cB_\LL$ can be seen as generalized cluster monomials.
As another application, we obtain a new proof for the sign-coherence of
$g$-vectors.
\end{abstract}

\maketitle

\setcounter{tocdepth}{1}
\tableofcontents

\parskip2mm



\section{Introduction}


\subsection{}
Let $\cA_Q$ be the Fomin-Zelevinsky cluster
algebra \cite{FZ1,FZ2} associated to 
a finite 2-acyclic quiver $Q$.
By definition $\cA_Q$ 
is generated by an inductively defined
set of rational functions, called cluster variables.
The cluster variables are contained in the set $\cM_Q$
of cluster monomials, which are by definition certain
monomials in the cluster variables.

Now let $W$ be a non-degenerate potential for $Q$, and let
$\LL = \cP(Q,W)$ be the associated Jacobian algebra introduced by Derksen, Weyman and Zelevinsky
\cite{DWZ1,DWZ2}.
The category of decorated representations of $\LL$ is
denoted by $\decrep(\LL)$.
To any $\cM \in \decrep(\LL)$ one can associate a
Laurent polynomial
$C_\LL(\cM)$, the Caldero-Chapoton function of $\cM$.
It follows from 
\cite{DWZ1,DWZ2}
that the cluster monomials form a subset of the set $\cC_\LL$ of
Caldero-Chapoton functions.

\subsection{The generic basis conjecture}
One of the main problems in cluster algebra theory is 
to find a basis of $\cA_Q$ with favourable properties.
As an important requirement, this basis should contain the set $\cM_Q$ of cluster monomials in a natural way.

The concept of strongly reduced irreducible components
of varieties of decorated representations of a Jacobian algebra $\LL$ was introduced
in \cite{GLSChamber}.
To each strongly reduced component $Z$ one can associate a
generic Caldero-Chapoton function $C_\LL(Z)$, see 
Sections~\ref{defCC} and \ref{defgenericCC}.
It was conjectured in \cite{GLSChamber} that the set
$\cB_\LL$ of generic Caldero-Chapoton functions forms a $\C$-basis
of $\cA_Q$.
Using a non-degenerate potential defined by Labardini \cite{La1,La2},
Plamondon \cite{P2} 
found a counterexample and then conjectured that
$\cB_\LL$ is a basis of the upper cluster algebra $\cA_Q^\up$.
This conjecture should also be wrong in general. 
We replace it by yet another conjecture. 

We study the \emph{Caldero-Chapoton algebra} 
$$
\cA_\LL := \langle C_\LL(\cM) \mid \cM \in \decrep(\LL) \rangle_\alg
$$
generated by all Caldero-Chapoton functions.
We do not restrict ourselves to Jacobian algebras, but
work with algebras $\LL$ defined as arbitrary quotients
of completed path algebras.
In particular, we generalize the notion of a Caldero-Chapoton
function to this general setup.
One easily checks that the functions $C_\LL(\cM)$ do not only generate
$\cA_\LL$ as an algebra but also as a vector space over
the ground field $\C$.

\begin{Conj}
$\cB_\LL$ is a $\C$-basis of $\cA_\LL$.
\end{Conj}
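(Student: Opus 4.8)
Since the statement is a conjecture, what follows is a strategy rather than a complete argument. The plan is to split the claim into two halves — linear independence of $\cB_\LL$ and the assertion that $\cB_\LL$ spans $\cA_\LL$ over $\C$ — and to reduce each half to structural results about $g$-vectors and about the geometry of strongly reduced components, most of which are among the results announced in the abstract.

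For linear independence I would exploit the $g$-vector grading. Every Caldero-Chapoton function $C_\LL(\cM)$ is a Laurent polynomial in the initial variables $x_1,\dots,x_n$ which, directly from the definition in Section~\ref{defCC}, carries a distinguished term whose exponent is the $g$-vector $g_\cM \in \Z^n$; by the sign-coherence of $g$-vectors — reproved later in the paper — this term is the unique minimal one for a suitable partial order on $\Z^n$. If one knows that the assignment $Z \mapsto g_Z$ from strongly reduced components to generic $g$-vectors is injective, which is precisely the content of the generalization of Plamondon's parametrization to arbitrary basic algebras, then distinct elements of $\cB_\LL$ have distinct minimal terms, so no nontrivial $\C$-linear combination of them can cancel. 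Thus linear independence would follow formally from the parametrization theorem together with sign-coherence.

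For spanning I would first invoke the decomposition theorem for strongly reduced components: a general representation in a strongly reduced component $Z$ decomposes as a direct sum of representations in indecomposable strongly reduced components $Z_1,\dots,Z_r$, and since $C_\LL$ is multiplicative on direct sums one gets $C_\LL(Z) = C_\LL(Z_1)\cdots C_\LL(Z_r)$; this realizes the elements of $\cB_\LL$ as generalized cluster monomials and reduces the indecomposable building blocks to the decorated simples together with the "rigid" and "band-type" components. Then, given an arbitrary $\cM \in \decrep(\LL)$, I would stratify the representation varieties by the value of the constructible map $\cM \mapsto C_\LL(\cM)$, which on a dense open subset of each irreducible component equals the generic value, and attempt to express each of the finitely many exceptional values as a $\C$-linear combination of generic functions, proceeding by Noetherian induction on orbit closures and using $E$-invariant and upper-semicontinuity estimates in the spirit of \cite{DWZ2,GLSChamber} to control the degenerations.

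The hard part will be this last step: there is no reason a priori that a special $C_\LL(\cM)$ equals a single generic $C_\LL(Z)$, and the failure of the earlier conjectures of \cite{GLSChamber} (basis of $\cA_Q$) and \cite{P2} (basis of $\cA_Q^\up$) shows that the interplay between $\cA_\LL$, $\cA_Q$ and $\cA_Q^\up$ is genuinely delicate. For infinite-dimensional $\LL$ one must moreover ensure that $\cA_\LL$, the $E$-invariant, and the relevant truncations behave well enough (finitely many strata, enough projectives and injectives where needed) for the induction to terminate. In my view the crux is to establish a clean degeneration formula writing an arbitrary $C_\LL(\cM)$ as an explicit $\C$-linear combination of generic functions $C_\LL(Z)$ indexed by strongly reduced components lying below the orbit closure of $\cM$; proving such a formula in full generality is presumably exactly what keeps the statement at the level of a conjecture.
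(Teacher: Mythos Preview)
The statement is a conjecture and the paper offers no proof; the right comparison is between your strategy and the partial results the paper does establish. For linear independence, your argument has a genuine gap: sign-coherence (Theorem~\ref{signcohintro}) applies only to the $g$-vectors of components $Z_1,\ldots,Z_t$ lying in a single component cluster, i.e.\ with $\overline{Z_1\oplus\cdots\oplus Z_t}$ strongly reduced. It says nothing about an arbitrary family of strongly reduced components, and indeed when $\overline{\LL}$ is finite-dimensional their $g$-vectors fill all of $\Z^n$ by Theorem~\ref{parametrization}(ii), so no global sign-coherence can hold. More importantly, sign-coherence is the wrong tool: the monomials in $C_\LL(\cM)$ have exponents $g_\LL(\cM)+B_Q\be$ with $\be\in\N^n$, so making the $\be=0$ term extremal requires an order under which $B_Q\bff$ has a definite sign for all $\bff\in\N^n$. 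The paper (Proposition~\ref{li2}) defines $\ba\le\bb$ iff $\ba=\bb+B_Q\bff$ for some $\bff\in\Q_{\ge 0}^n$, which is a partial order precisely when $\Ker(B_Q)\cap\Q_{\ge 0}^n=0$; under that hypothesis linear independence of $\cB_\LL$ follows from the injectivity of $Z\mapsto g_\LL(Z)$. Without that hypothesis the paper proves nothing about linear independence, and neither does your argument.

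For spanning you are right that this is the open part; the paper does not attempt it and only records the acyclic case via \cite{GLSChamber}. Your Noetherian-induction sketch is a plausible heuristic, but note that the decomposition theorem only explains how generic functions on strongly reduced components factor into products of generic functions on indecomposable ones; expressing an arbitrary $C_\LL(\cM)$ as a $\C$-linear combination of such generic functions is exactly the unresolved content of the conjecture.
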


We show that the set
$\cB_\LL$ of generic Caldero-Chapoton functions is linearly independent provided
the kernel of the skew-symmetric incidence matrix $B_Q$ of $Q$ does not contain any non-zero element in $\Q_{\ge 0}^n$.
This generalizes \cite[Proposition~3.19]{P2}.

For $\LL = \cP(Q,W)$ a Jacobian algebra associated to a quiver
$Q$ with non-degenerate potential $W$ we have
$$
\cA_Q \subseteq \cA_\LL \subseteq \cA_Q^\up
$$
where $\cA_Q$ is the cluster algebra and $\cA_Q^\up$ is
the upper cluster algebra associated to $Q$.
(We refer to \cite{BFZ,DWZ1,FZ1} for missing definitions.)
For this special case, we give a list of open problems, which hopefully will lead
to a better understanding of the rather mysterious relation
between $\cA_Q$ and $\cA_Q^\up$.

\subsection{Parametrization of strongly reduced components}
Plamondon \cite[Theorem~1.2]{P2} parametrized the strongly reduced components
for finite-dimensional basic algebras. 
(For our (non-standard) definition of a basic algebra we refer to Section~\ref{sec2.1}.)
We generalize Plamondon's result 
to arbitrary basic algebras.
Let $\LL = \CQ{Q}/I$ be a basic algebra, where the
quiver $Q$ has $n$ vertices.
Let $\decirr(\LL)$ be the set of irreducible
components of all varieties $\decrep_{\bd,\bv}(\LL)$ of
decorated representations of $\LL$, where $(\bd,\bv)$ runs
through $\N^n \times \N^n$.
By $\decirr^\sr(\LL)$ we denote the subset of strongly
reduced components. (The definition is in Section~\ref{sec5}.)
Recall that $\decirr^\sr(\LL)$ parametrizes the elements in $\cB_\LL$.

Let
$$
G_\LL^\sr\df \decirr^\sr(\LL) \to \Z^n
$$
be the map sending $Z \in \decirr^\sr(\LL)$
to the generic $g$-vector $g_\LL(Z)$ of $Z$.
(The definition of a $g$-vector is in Section~\ref{sec3}.)
Using Plamondon's result for finite-dimensional
algebras, and a long-path truncation argument,
we get the following parametrization of
strongly reduced components for arbitrary $\LL$.

\begin{Thm}\label{paraintro}
For a basic algebra $\LL = \CQ{Q}/I$ the following hold:
\begin{itemize}

\item[(i)]
The map
$$
G_\LL^\sr\df \decirr^\sr(\LL) \to \Z^n
$$
is injective.

\item[(ii)]
The following are equivalent:
\begin{itemize}

\item[(a)]
$G_\LL^\sr$ is surjective.

\item[(b)]
$\overline{\LL} := \CQ{Q}/\overline{I}$ is finite-dimensional,
where $\overline{I}$ is the $\m$-adic closure of $I$.

\end{itemize}

\end{itemize}
\end{Thm}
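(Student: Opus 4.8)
The strategy is to reduce everything to Plamondon's finite-dimensional case \cite[Theorem~1.2]{P2} via a truncation argument, using the fact that a strongly reduced component, being cut out by support and $E$-interior conditions, is insensitive to high powers of the arrow ideal.

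For injectivity (i), suppose $Z_1, Z_2 \in \decirr^\sr(\LL)$ have the same generic $g$-vector. The key observation is that every $\cM \in \decrep(\LL)$ with $\cM$ lying in a strongly reduced component is in fact a module over some finite-dimensional truncation $\LL_N := \CQ{Q}/(\overline{I} + \m^N)$; indeed strong reducedness forces the generic module along $Z$ to have bounded $\dimv$, so for $N$ large enough both $Z_1$ and $Z_2$ (or rather their defining data) descend to $\decirr^\sr(\LL_N)$. The truncation functor and its effect on $E$-invariants, $g$-vectors, and $\decrep_{\bd,\bv}$ must be checked to be compatible — this is the "long-path truncation" bookkeeping alluded to in the text and presumably established in an earlier section. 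Once $Z_1, Z_2$ are realized inside the finite-dimensional algebra $\LL_N$ with a common $g$-vector, Plamondon's injectivity (part (i) of his theorem, which we may invoke) gives $Z_1 = Z_2$. The only subtlety is that different components might need different truncation levels $N$, but one takes the maximum of the two.

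For (ii), the implication (b)$\Rightarrow$(a) is again a direct appeal to Plamondon: if $\overline{\LL}$ is finite-dimensional, then $\decirr^\sr(\LL) = \decirr^\sr(\overline{\LL})$ (the closure $\overline{I}$ does not change the representation varieties or the strongly reduced components, since representations of $\LL$ are automatically representations of $\CQ{Q}/\overline{I}$), and Plamondon's theorem gives that $G_{\overline{\LL}}^\sr$ is a bijection onto $\Z^n$, in particular surjective. For the contrapositive of (a)$\Rightarrow$(b): assume $\overline{\LL}$ is infinite-dimensional. Then there exists a vertex $i$ such that the projective $\overline{\LL} e_i$ has arbitrarily long paths in its support, and one must produce a $g$-vector in $\Z^n$ not hit by $G_\LL^\sr$. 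The natural candidate is the $g$-vector of a would-be "negative projective" or shifted object: in the finite-dimensional truncations $\LL_N$ the negative simple $S_i^-$ has $g$-vector $e_i$ and the corresponding strongly reduced component has generic module $P_i(\LL_N)$ of growing dimension as $N \to \infty$; the limit does not exist as an $\LL$-module, so no strongly reduced component of $\LL$ can carry that $g$-vector — or more precisely, the $g$-vectors realized by $\decirr^\sr(\LL)$ are exactly those realized at \emph{some} finite truncation level, and one shows this set is a proper subset of $\Z^n$ precisely when the truncations do not stabilize, i.e. when $\overline{\LL}$ is infinite-dimensional.

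The main obstacle is the (a)$\Rightarrow$(b) direction: one needs a clean characterization of $\Ima(G_\LL^\sr)$ as the union over $N$ of $\Ima(G_{\LL_N}^\sr) = \Z^n$ — but this union is all of $\Z^n$ even when $\overline{\LL}$ is infinite-dimensional! So the real content is that a $g$-vector $g \in \Z^n$ is hit by $G_\LL^\sr$ only if the corresponding strongly reduced component, which exists at every truncation level $\LL_N$ containing it, has its generic $\dimv$ \emph{stabilize} as $N$ grows; equivalently the component must come from a genuine $\LL$-module. Translating "$g$ is realized by an honest strongly reduced component of $\LL$" into a statement about the stabilization of truncations, and then showing the set of such $g$ is all of $\Z^n$ iff $\overline\LL$ is finite-dimensional, is where the argument requires care — one likely uses that the $E$-invariant and the generic dimension vector are controlled by the $g$-vector via the finite-dimensional theory, so boundedness of dimension along the family $(\LL_N)$ is equivalent to finite-dimensionality of $\overline{\LL}$ itself (take $g = e_i$ to detect whether $P_i(\overline{\LL})$ is finite-dimensional).
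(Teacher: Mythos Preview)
Your outline for part (i) and for (b)$\Rightarrow$(a) in part (ii) is correct and matches the paper's approach: reduce to the truncations $\LL_p$, invoke Plamondon, and use that $\decirr^\sr(\LL) = \bigcup_p \decirr_{<p}^\sr(\LL_p)$.

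The gap is in the hard direction (a)$\Rightarrow$(b). Two concrete problems. First, you have the wrong test vector and the wrong module class: the $g$-vector $e_i$ is \emph{always} in the image of $G_\LL^\sr$ (it is realized by the negative simple $\cS_i^-$ itself), so it cannot witness non-surjectivity. The paper instead shows that $-e_i$ is not in the image when $\overline{\LL}$ is infinite-dimensional, and the module attached to $-e_i$ in each truncation $\LL_p$ is the \emph{injective} envelope $I_{i,p}$ of $S_i$, not a projective. One picks $i$ so that $\overline{I}_i$ is infinite-dimensional, then proves $\dim(I_{i,p}) \ge p$ for all $p$. Second, your ``main obstacle'' paragraph correctly senses a difficulty but misdiagnoses it: the point is not that $\bigcup_p \Ima(G_{\LL_p}^\sr) = \Z^n$ (which is true and irrelevant), but that $\decirr^\sr(\LL)$ equals $\bigcup_p \decirr_{<p}^\sr(\LL_p)$, with the dimension bound $<p$ built in. So if $Z \in \decirr^\sr(\LL)$ had $g_\LL(Z) = -e_i$, then $Z \in \decirr_{<p}^\sr(\LL_p)$ for some $p$, and Plamondon's theorem (specifically the part asserting $\add(I_0^{\LL_p}(Z)) \cap \add(I_1^{\LL_p}(Z)) = 0$) forces $Z$ to be the orbit closure of $(I_{i,p},0)$; but $\dim(I_{i,p}) \ge p$ contradicts the $<p$ bound. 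Your ``stabilization'' language is a reasonable heuristic for this, but the actual mechanism is this direct dimension contradiction, and it relies on Plamondon's disjointness statement, which you do not invoke.
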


\subsection{A decomposition theorem for strongly reduced components}
The notion of a direct sum
of irreducible components of representation varieties was introduced
in \cite{CBS}.
The Zariski closure $Z := \overline{Z_1 \oplus \cdots \oplus Z_t}$
of a direct sum of irreducible components $Z_1,\ldots,Z_t$
of varieties of representations of $\LL$
is always irreducible, but in general $Z$ is not an irreducible
component.
It was shown in
\cite{CBS} that $Z$ is an irreducible component
provided the dimension of the first extension group between
the components is generically zero.
The following decomposition theorem is an analogue for strongly reduced components.
Instead of extension groups, we work with
a generalization $E_\LL(-,?)$ of the Derksen-Weyman-Zelevinsky $E$-invariant \cite{DWZ2}. (We define $E_\LL(-,?)$ in Section~\ref{sec3}.)

\begin{Thm}\label{decomp2intro}
For $Z_1,\ldots,Z_t \in \decirr(\LL)$ the following are equivalent:
\begin{itemize}

\item[(i)]
$\overline{Z_1 \oplus \cdots \oplus Z_t}$ is a strongly reduced irreducible component.

\item[(ii)]
Each $Z_i$ is strongly reduced and
$E_\LL(Z_i,Z_j) = 0$ for all $i \not= j$.

\end{itemize}
\end{Thm}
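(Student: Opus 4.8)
The plan is to mimic the strategy of \cite{CBS} for ordinary irreducible components, replacing the first extension group by the invariant $E_\LL(-,?)$ and the constructibility/semicontinuity arguments for $\dim\Ext^1$ by the corresponding ones for $E_\LL$. The first task is to record the basic properties of $E_\LL(-,?)$ that we will need: (1) it is upper semicontinuous on $\decrep_{\bd,\bv}(\LL)\times\decrep_{\bd',\bv'}(\LL)$, so that it attains a generic value on any product of two irreducible components, and we may define $E_\LL(Z_i,Z_j)$ as that generic value; (2) it is additive under direct sums, i.e.\ $E_\LL(\cM\oplus\cN,\cM'\oplus\cN') = E_\LL(\cM,\cM') + E_\LL(\cM,\cN') + E_\LL(\cN,\cM') + E_\LL(\cN,\cN')$; and (3) $E_\LL(\cM)=E_\LL(\cM,\cM)$ controls strong reducedness in the sense recalled from Section~\ref{sec5}: an irreducible component $Z$ is strongly reduced iff the generic value of $E_\LL(\cM)$ on $Z$ equals a certain explicit lower bound (so ``$E_\LL$-rigidity'' characterizes strong reducedness). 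These facts are either stated in the cited parts of the excerpt or are formal consequences of the definition of $E_\LL$ given in Section~\ref{sec3}.

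For the implication (ii) $\Rightarrow$ (i), assume each $Z_i$ is strongly reduced with $E_\LL(Z_i,Z_j)=0$ for $i\neq j$. Write $Z := \overline{Z_1\oplus\cdots\oplus Z_t}$; this is irreducible by \cite{CBS}. I would first show $Z$ is an irreducible \emph{component}. Pick generic decorated representations $\cM_i$ in $Z_i$; by semicontinuity the generic point of $Z$ has $E_\LL$-value $\sum_i E_\LL(\cM_i,\cM_i) + \sum_{i\neq j} E_\LL(\cM_i,\cM_j) = \sum_i E_\LL(\cM_i)$ using additivity and the vanishing of the cross terms. Since each $Z_i$ is strongly reduced this sum equals the lower bound that defines strong reducedness for the dimension vector $(\bd,\bv) = \sum_i(\bd_i,\bv_i)$; hence the generic point of $Z$ already realizes the minimal possible $E_\LL$-value, which by the characterization in Section~\ref{sec5} forces $Z$ to lie in a strongly reduced component $Z'$, and the same minimality (together with injectivity of $g$-vectors, Theorem~\ref{paraintro}, or directly the dimension bookkeeping of \cite{CBS}) forces $Z = Z'$. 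This simultaneously gives that $Z$ is a component and that it is strongly reduced, i.e.\ (i).

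For (i) $\Rightarrow$ (ii), suppose $Z = \overline{Z_1\oplus\cdots\oplus Z_t}$ is a strongly reduced component. The generic $E_\LL$-value on $Z$ is the defining lower bound $c(\bd,\bv)$; on the other hand it equals $\sum_i E_\LL(\cM_i) + \sum_{i\neq j}E_\LL(\cM_i,\cM_j)$ as above, and each summand is bounded below by the corresponding piece of $c$, with $\sum_i c(\bd_i,\bv_i) = c(\bd,\bv)$ and each $E_\LL(\cM_i,\cM_j)\ge 0$. Comparing, every inequality must be an equality: thus $E_\LL(\cM_i,\cM_j)=0$ for $i\neq j$ — which is exactly $E_\LL(Z_i,Z_j)=0$ — and $E_\LL(\cM_i)=c(\bd_i,\bv_i)$, so each $Z_i$ is strongly reduced. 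The one point requiring care is the additivity of the lower bound $c$ and the fact that generic representations in $Z_i$ remain generic (have the generic $E_\LL$-value of $Z_i$) after forming the direct sum; this follows because the direct-sum map $\prod_i Z_i \to \decrep_{\bd,\bv}(\LL)$ has image dense in $Z$ and $E_\LL$ is constructible, so its generic value on the image is computed at a generic point of the source.

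\textbf{Main obstacle.} The crux is establishing the semicontinuity and additivity of $E_\LL(-,?)$ in the infinite-dimensional (completed) setting, and — more subtly — proving that the generic value of $E_\LL$ on a direct sum of components equals the sum of the generic values on the factors plus the pairwise $E_\LL(Z_i,Z_j)$. In the finite-dimensional case this kind of ``genericity is compatible with direct sums'' statement is standard, but here one must make sure the $E$-invariant, which is defined via homomorphism spaces to the injectives of $\LL$ and to truncations, behaves well under the relevant limits; a long-path truncation argument as in the proof of Theorem~\ref{paraintro} is likely needed to reduce to the finite-dimensional situation where \cite{CBS} and Plamondon's techniques apply directly.
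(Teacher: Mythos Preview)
Your overall strategy is the paper's: reduce to $t=2$, use additivity of $E_\LL$ under direct sums to write $E_\LL(Z)=E_\LL(Z_1)+E_\LL(Z_2)+E_\LL(Z_1,Z_2)+E_\LL(Z_2,Z_1)$, compare with the lower bound $c_\LL$, and read off both directions. But there is a genuine gap precisely at the point you yourself flag as ``requiring care''.

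The quantity $c_\LL(Z)$ is \emph{not} a function $c(\bd,\bv)$ of the dimension vector alone: it equals $\dim(Z)-\dim(G_\bd)+\gEnd_\LL(Z)$, and both $\dim(Z)$ and $\gEnd_\LL(Z)$ depend on the specific component $Z$. What must actually be proved is the identity
\[
c_\LL(\overline{Z_1\oplus Z_2})=c_\LL(Z_1)+c_\LL(Z_2),
\]
and this is where essentially all the content of the theorem lives. In the paper this is done by computing $\dim(\overline{Z_1\oplus Z_2})$ via the morphism $G_\bd\times Z_1\times Z_2\to\overline{Z_1\oplus Z_2}$, determining the generic fibre dimension in terms of $\gHom_\LL(Z_1,Z_2)+\gHom_\LL(Z_2,Z_1)$, and then observing that these Hom terms cancel exactly against the corresponding contribution to $\gEnd_\LL(\overline{Z_1\oplus Z_2})$. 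None of this is present in your sketch; writing ``$\sum_i c(\bd_i,\bv_i)=c(\bd,\bv)$'' as if it were a formal bookkeeping identity hides the only nontrivial step. Without it neither implication closes, since both reduce to this unproved additivity of $c_\LL$.

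A smaller issue: in (ii)$\Rightarrow$(i) you must know that $Z=\overline{Z_1\oplus Z_2}$ is already an irreducible \emph{component} before calling it strongly reduced. Your proposed route via Theorem~\ref{paraintro} is circular, since that injectivity statement is only for components already known to lie in $\decirr^\sr(\LL)$. The direct argument is simpler: $E_\LL(Z_i,Z_j)=0$ forces $\gExt_\LL^1(Z_i,Z_j)=0$ (since $\gExt_\LL^1\le E_\LL$), and then Theorem~\ref{decomp1} gives that $Z$ is a component. Finally, the ``main obstacle'' you identify---semicontinuity and truncation issues for infinite-dimensional $\LL$---is not where the difficulty lies; those reductions are routine once $p$ is chosen large relative to the dimension vectors, and the real work is purely the dimension count above.
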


Based on Theorem~\ref{decomp2intro}, we show that all 
elements of $\cB_\LL$ can be seen as $CC$-cluster monomials. (The $CC$-cluster monomials generalize 
Fomin and Zelevinsky's notion of cluster monomials.)

\subsection{Sign-coherence of $g$-vectors}
A subset $U$ of $\Z^n$ is
called \emph{sign-coherent} if for each $1 \le i \le n$
we have either $a_i \ge 0$ for all $(a_1,\ldots,a_n) \in U$,
or we have $a_i \le 0$ for all $(a_1,\ldots,a_n) \in U$.

The following theorem generalizes
\cite[Theorem~3.7(1)]{P2}.

\begin{Thm}\label{signcohintro}
Let $\LL$ be a basic algebra, and
let $Z_1,\ldots,Z_t \in \decirr^\sr(\LL)$ be strongly reduced
components. 
Assume that 
$$
\overline{Z_1 \oplus \cdots \oplus Z_t}
$$
is a strongly reduced component.
Then 
$\{ g_\LL(Z_1),\ldots,g_\LL(Z_t) \}$ 
is
sign-coherent.
\end{Thm}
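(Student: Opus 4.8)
The plan is to deduce Theorem~\ref{signcohintro} from the decomposition theorem (Theorem~\ref{decomp2intro}) together with the basic compatibility between $g$-vectors and direct sums of components. First I would recall that for decorated representations $\cM$ and $\cN$ one has $g_\LL(\cM \oplus \cN) = g_\LL(\cM) + g_\LL(\cN)$, and that the generic $g$-vector is constant on a strongly reduced component; hence $g_\LL(\overline{Z_1 \oplus \cdots \oplus Z_t}) = g_\LL(Z_1) + \cdots + g_\LL(Z_t)$. So the sign-coherence of the family $\{g_\LL(Z_1),\ldots,g_\LL(Z_t)\}$ amounts to showing that in each coordinate $i$, the integers $g_\LL(Z_1)_i,\ldots,g_\LL(Z_t)_i$ do not have mixed signs. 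By the hypothesis and Theorem~\ref{decomp2intro}, each $Z_j$ is itself strongly reduced and $E_\LL(Z_j,Z_k) = 0$ for $j \neq k$.

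The key reduction is the two-summand case: it suffices to show that if $Z_1, Z_2 \in \decirr^\sr(\LL)$ with $E_\LL(Z_1,Z_2) = E_\LL(Z_2,Z_1) = 0$, then $\{g_\LL(Z_1), g_\LL(Z_2)\}$ is sign-coherent, because sign-coherence of an arbitrary family follows by applying this to every pair $Z_j, Z_k$ (sign-coherence of $U$ is equivalent to sign-coherence of every two-element subset of $U$). For the two-summand statement I would invoke the interpretation of the $E$-invariant in terms of $g$-vectors. Recall from \cite{DWZ2} and its generalization in Section~\ref{sec3} that, writing $g_\LL(\cM) = (g_1,\ldots,g_n)$, one has a formula of the shape $E_\LL(\cM,\cN) = \sum_i [\text{something nonnegative}] + \langle$ pairing of $g_\LL(\cM)$ against $\dimv \cN \rangle$ type terms; more precisely the $E$-invariant decomposes as $E_\LL(\cM,\cN) = E_\LL^{\inj}(\cM,\cN) + \hom_\LL$-type contributions, all of which are nonnegative, and the homogeneous $g$-vector part enters through the expression $-\langle g_\LL(\cM), \dimv N\rangle - \langle g_\LL(\cN), \dimv M\rangle$ plus nonnegative correction terms. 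Vanishing of $E_\LL(Z_1,Z_2)$ and $E_\LL(Z_2,Z_1)$ then forces each of these nonnegative summands to vanish generically, and in particular pins down the mutual positions of the two $g$-vectors enough to exclude a coordinate $i$ with $g_\LL(Z_1)_i > 0 > g_\LL(Z_2)_i$.

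The cleanest route, which I would actually follow, is to reduce to the finite-dimensional case already handled by Plamondon \cite[Theorem~3.7(1)]{P2} via the long-path truncation argument used to prove Theorem~\ref{paraintro}. Concretely: choose $N$ large enough that the truncated algebra $\LL_N = \CQ{Q}/(I + \m^N)$ sees all of $Z_1,\ldots,Z_t$, i.e.\ the generic points of each $Z_j$ are modules over $\LL_N$, the truncation induces a bijection of the relevant strongly reduced components (as in the proof of Theorem~\ref{paraintro}), the truncation preserves $g$-vectors, and it preserves the $E$-invariant and the direct-sum decomposition (so $\overline{Z_1 \oplus \cdots \oplus Z_t}$ truncates to a strongly reduced component of $\decirr(\LL_N)$ by Theorem~\ref{decomp2intro} applied to $\LL_N$). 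Then Plamondon's theorem for the finite-dimensional algebra $\LL_N$ gives sign-coherence of $\{g_{\LL_N}(Z_1),\ldots,g_{\LL_N}(Z_t)\}$, and since $g_{\LL_N}(Z_j) = g_\LL(Z_j)$ for all $j$, we are done.

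The main obstacle is the second step: making precise that truncation is compatible with all the relevant invariants simultaneously --- the $g$-vector, the $E$-invariant, strong reducedness, and the direct-sum operation on components --- so that the finite-dimensional result transfers verbatim. The $g$-vector compatibility should be immediate from its definition in Section~\ref{sec3} (it depends only on a minimal projective presentation, which survives truncation for $N$ large), and the $E$-invariant compatibility is the substance of the reduction arguments already used for Theorems~\ref{paraintro} and \ref{decomp2intro}; the point is to check that a single choice of $N$ works for the whole finite collection $Z_1,\ldots,Z_t$, which follows since each $Z_j$ individually admits such an $N_j$ and one takes $N = \max_j N_j$. Everything else is formal.
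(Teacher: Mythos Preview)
Your truncation idea is right, and the reduction to pairs is fine, but the final step has a genuine gap. You propose to invoke Plamondon's sign-coherence result \cite[Theorem~3.7(1)]{P2} for the finite-dimensional truncation $\LL_N$. However, as the paper states explicitly at the start of Section~\ref{sec8}, that result of Plamondon is proved only in the special case where $\LL = \cP(Q,W)$ is a Jacobian algebra with non-degenerate potential and the component cluster is $E$-rigid. The truncation $\LL_N = \CQ{Q}/(I+\m^N)$ is an arbitrary finite-dimensional basic algebra, not a Jacobian algebra in general, and your components need not be $E$-rigid. So \cite[Theorem~3.7(1)]{P2} simply does not apply to $\LL_N$, and the reduction collapses. (Your first vague attempt, via a decomposition of $E_\LL(\cM,\cN)$ into nonnegative pieces, is not correct either: the $g$-vector enters $E_\LL$ only as $\sum_i \dim(M_i) g_i(\cN)$, not as a symmetric pairing with guaranteed nonnegative corrections.)

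What the paper actually does after truncating to $\LL_p$ is \emph{not} to cite a sign-coherence theorem, but to use the other Plamondon result, Theorem~\ref{plamondon1}(ii), which holds for arbitrary finite-dimensional basic algebras: for any strongly reduced $Z$ one has $\add(I_0^{\LL_p}(Z)) \cap \add(I_1^{\LL_p}(Z)) = 0$. Applied to $Z = \overline{Z_1 \oplus Z_2}$ (whose generic injective presentation is the direct sum of those of $Z_1$ and $Z_2$), this immediately rules out a coordinate where $g_\LL(Z_1)$ and $g_\LL(Z_2)$ have opposite signs, \emph{provided both decoration parts vanish}. When one decoration part is nonzero the component is a negative simple $\cO(\cS_i^-)$, and the paper handles this separately using Lemma~\ref{negsimple}, namely $E_\LL(\cM,\cS_i^-) = \dim(M_i)$: if $g_\LL(Z_1)_i < 0$ then generically $\dim(M_i) > 0$, contradicting $E_\LL(Z_1,Z_2) = 0$. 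You are missing this case analysis entirely, and also the observation that the $g$-vector is governed here by minimal \emph{injective} (not projective) presentations.
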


\subsection{}
The paper is organized as follows.
In Section~\ref{sec2} we recall definitions and basic properties
of basic algebras and their (decorated) representations.
We also introduce truncations of basic algebras, which play a crucial
role in some of our proofs.
In Section~\ref{sec3} we introduce and study $g$-vectors
and $E$-invariants of decorated representations. 
Caldero-Chapoton functions and
Caldero-Chapoton algebras are defined in Section~\ref{sec4}.
Our main results Theorem~\ref{paraintro} and \ref{decomp2intro}
are proved in Section~\ref{sec5}.
In Section~\ref{sec6} we introduce component graphs, 
component clusters and $CC$-clusters, and we show that the
cardinality of loop-complete subgraphs of a component graph
is bounded by the number of simple modules. 
Section~\ref{sec7} explains the relation between Caldero-Chapoton algebras and cluster algebras.
Section~\ref{sec8} contains the proof of 
Theorem~\ref{signcohintro}.
Finally, in Section~\ref{sec9} we discuss several examples
of Caldero-Chapoton algebras.

\subsection{Notation}
We denote 
the composition of maps $f\colon M \to N$ and
$g\colon N \to L$ by $gf = g \circ f\colon M \to L$.
We write $|U|$ for the cardinality of a set $U$. 

A finite-dimensional module $M$ is \emph{basic} provided
it is a direct sum of pairwise non-isomorphic indecomposable
modules.
For a module $M$ and some $m \ge 1$ let $M^m$
be the direct sum of $m$ copies of $M$.

For a finite-dimensional algebra $\LL$ let
$\tau_\LL$ be its
Auslander-Reiten translation.
For an introduction to Auslander-Reiten theory we refer to the
books \cite{ARS} and \cite{ASS}.

For $n \ge 1$ and a set $S$, depending on the situation and
if no misunderstanding can occur,
we identify $S^n$ with the set of $(n \times 1)$- or
$(1 \times n)$-matrices with entries in $S$.
By $\N$ we denote the natural numbers, including zero.
For 
$\bd = (d_1,\ldots,d_n) \in \N^n$ let $|\bd| := d_1 + \cdots + d_n$.
For $n \in \N$ let $M_n(\Z)$ be the set of 
$(n \times n)$-matrices with integer entries.

For a ring $R$ let
$R[x_1^\pm,\ldots,x_n^\pm]$ be the algebra
of Laurent polynomials over $R$ in $n$ independent variables
$x_1,\ldots,x_n$.
For $\ba = (a_1,\ldots,a_n) \in \Z^n$ 
set 
$\bx^\ba := x_1^{a_1} \cdots x_n^{a_n}$.


\section{Basic algebras and decorated representations}\label{sec2}


\subsection{Basic algebras and quiver representations}\label{sec2.1}
Throughout, let $\C$ be the field of complex numbers.
A \emph{quiver} is a quadruple $Q = (Q_0,Q_1,s,t)$, where
$Q_0$ is a finite set of \emph{vertices}, $Q_1$ is a finite
set of \emph{arrows}, and $s,t\colon Q_1 \to Q_0$ are
maps.
For each arrow $a \in Q_1$ we call $s(a)$ and $t(a)$ the
starting and terminal vertex of $a$, respectively.
If not mentioned otherwise, we always assume that 
$Q_0 = \{ 1,\ldots,n \}$.
Let $B_Q = (b_{ij}) \in M_n(\Z)$, where
$$
b_{ij} := |\{ a \in Q_1 \mid s(a) = j,\, t(a) = i \}| - 
|\{ a \in Q_1 \mid s(a) = i,\, t(a) = j\}|.
$$

A \emph{path} in $Q$ 
is a tuple $p = (a_m,\ldots,a_1)$ of
arrows $a_i \in Q_1$ such that $s(a_{i+1}) = t(a_i)$ for all
$1 \le i \le m-1$.
Then $\length(p) := m$ is the \emph{length} of $p$.
Additionally, for each vertex $i \in Q_0$ there is a path
$e_i$ of length $0$.
We often just write $a_m \cdots a_1$ instead of $(a_m,\ldots,a_1)$.

A path $p = (a_m,\ldots,a_1)$ of length $m \ge 1$ is a
\emph{cycle} in $Q$, or more precisely an
$m$-\emph{cycle} in $Q$, if $s(a_1) = t(a_m)$.
The quiver $Q$ is \emph{acyclic} if there are no cycles in $Q$,
and for $s \ge 1$ the quiver $Q$ is called 
$s$-\emph{acyclic} if there are no $m$-cycles for
$1 \le m \le s$.

A \emph{representation} of a quiver $Q = (Q_0,Q_1,s,t)$ is a 
tuple $M = (M_i,M_a)_{i \in Q_0,a \in Q_1}$, where each 
$M_i$ is
a finite-dimensional $\C$-vector space, and $M_a\colon M_{s(a)} \to M_{t(a)}$ is a $\C$-linear map for 
each arrow $a \in Q_1$.
We call $\dimv(M) := (\dim(M_1),\ldots,\dim(M_n))$ the
\emph{dimension vector} of $M$.
Let $\dim(M) := \dim(M_1) + \cdots + \dim(M_n)$ be the
\emph{dimension} of $M$. 
For a path $p = (a_m,\ldots,a_1)$ in $Q$ let 
$M_p := M_{a_m} \circ \cdots \circ M_{a_1}$.
The representation $M$ is called \emph{nilpotent} provided
there exists some $N > 0$ such that
$M_p = 0$ for all paths $p$ in $Q$ with 
$\length(p) > N$.

For $i \in Q_0$ let
$S_i := (M_i,M_a)_{i,a}$ be the representation of $Q$ with
$M_i = \C$, $M_j = 0$ for all $j \not= i$, and
$M_a = 0$ for all $a \in Q_1$.
For a nilpotent representation $M$ the $i$th entry
$\dim(M_i)$ of its dimension vector $\dimv(M)$ 
equals the
Jordan-H\"older multiplicity $[M:S_i]$ of $S_i$ in $M$.

For $m \in \N$ let $\C Q_m$ be a $\C$-vector space with a $\C$-basis labeled by  
the set $Q_m$ of paths of length $m$ in $Q$.
Note that $\C Q_m$ is finite-dimensional.
We do not distinguish between a path $p$ of length $m$ and the corresponding basis vector in $\C Q_m$.

The \emph{completed path algebra} of a quiver $Q$ is
denoted by $\CQ{Q}$.
As a $\C$-vector space we have
$$
\CQ{Q} = \prod_{m \ge 0} \C Q_m.
$$
We write the elements in $\CQ{Q}$ as infinite sums
$\sum_{m \ge 0} a_m$ with $a_m \in \C Q_m$.
The product in $\CQ{Q}$ is then defined as
$$
(\sum_{i \ge 0} a_i)(\sum_{j \ge 0} b_j) := 
\sum_{k \ge 0} \sum_{i+j = k}a_ib_j.
$$
A \emph{potential} of $Q$ is an element $W = \sum_{m \ge 1} w_m$ of $\CQ{Q}$, where each $w_m$ is a $\C$-linear
combination of $m$-cycles in $Q$.
By definition, $W=0$ is also a potential.
The definition of a \emph{non-degenerate} potential can
be found in \cite[Section~7]{DWZ1}.

The category $\md(\CQ{Q})$ of finite-dimensional left $\CQ{Q}$-modules
can be identified with the category $\nil(Q)$ of nilpotent representations
of $Q$.

By $\m$ we denote the \emph{arrow ideal} in $\CQ{Q}$,
which is generated by the arrows of $Q$.
Thus for $p \ge 0$ we have
$$
\m^p = \prod_{m \ge p} \C Q_m.
$$
An ideal $I$ of $\CQ{Q}$ is \emph{admissible} if
$I \subseteq \m^2$.
We call an algebra $\LL$ \emph{basic} if $\LL = \CQ{Q}/I$ for
some quiver $Q$ and some admissible ideal $I$ of $\CQ{Q}$.

A \emph{representation} of a basic algebra $\LL = \CQ{Q}/I$ is a nilpotent representation of $Q$, which is annihilated by the ideal $I$.
We identify the category $\rep(\LL)$ of representations of $\LL$ 
with the category $\md(\LL)$ of finite-dimensional left $\LL$-modules.
Up to isomorphism the simple representations of $\LL$ are
the 1-dimensional representations 
$S_1,\ldots,S_n$.

The category of all (possibly infinite dimensional) $\LL$-modules
is denoted by $\Mod(\LL)$, we consider $\rep(\LL)$ as a subcategory of $\Mod(\LL)$.

\subsection{Decorated representations of quivers}
Let $\LL = \CQ{Q}/I$ be a basic algebra.
Following \cite{DWZ1} (and in fact the earlier publication \cite{MRZ}),
a \emph{decorated representation} of $\LL$
is a pair $\cM = (M,V)$, where $M$ is 
a representation of $\LL$ and $V = (V_1,\ldots,V_n)$ is
a tuple of finite-dimensional $\C$-vector spaces.
Let $\dimv(V) := (\dim(V_1),\ldots,\dim(V_n))$
and $\dim(V) := \dim(V_1) + \cdots + \dim(V_n)$. 
We call $\dimv(\cM) := (\dimv(M),\dimv(V))$ the \emph{dimension vector} of $\cM$.

One defines morphisms and direct sums of decorated representations in
the obvious way.
Let $\decrep(\LL)$ be the category of decorated representations of $\LL$.

Let $\cM = (M,V) \in \decrep(\LL)$.
We write $M = 0$ if all $M_i$ are zero, and $V = 0$ if all $V_i$ are zero.
Furthermore, $\cM = 0$ if $M=0$ and $V=0$.

For $1 \le i \le n$ set $\cS_i := (S_i,0)$, and let
$\cS_i^- := (0,V)$, where
$V_i = \C$ and $V_j = 0$ for all $j \not= i$.
The representations $\cS_i^-$ are
the \emph{negative simple}
decorated representations of $\LL$.

\subsection{Varieties of representations}
For $\bd = (d_1,\ldots,d_n) \in \N^n$ let
$\rep_\bd(\LL)$ be the affine variety of representations
of $\LL$
with dimension vector $\bd$.
By definition the closed points of $\rep_\bd(\LL)$ are the representations $M = (M_i,M_a)_{i \in Q_0,a \in Q_1}$ of $\LL$
with $M_i = \C^{d_i}$ for all $i \in Q_0$.
One can regard $\rep_\bd(\LL)$ as
a Zariski closed subset of the affine space
$$
\rep_\bd(Q) := \prod_{a \in Q_1} 
\Hom_\C(\C^{d_{s(a)}},\C^{d_{t(a)}}).
$$
For $\bd = (d_1,\ldots,d_n)$ let
$G_\bd := \prod_{i =1}^n \GL(\C^{d_i})$.
The group $G_\bd$ acts on $\rep_\bd(\LL)$ by conjugation.
More precisely, 
for $g = (g_1,\ldots,g_n) \in G_\bd$ 
and $M \in \rep_\bd(\LL)$ let
$$
g.M := (M_i,g_{t(a)}^{-1}M_ag_{s(a)})_{i \in Q_0,a \in Q_1}.
$$
For $M \in \rep_\bd(\LL)$ let $\cO(M)$ be the $G_\bd$-orbit
of $M$. 
The $G_\bd$-orbits are in bijection with the isomorphism
classes of representations of $\LL$ with dimension vector $\bd$.

For $(\bd,\bv) \in \N^n \times \N^n$ let
$\decrep_{\bd,\bv}(\LL)$ be the affine
variety of decorated representations $\cM = (M,V)$
with $M \in \rep_\bd(\LL)$ and
$V = \C^\bv := (\C^{v_1},\ldots,\C^{v_n})$,
where $\bv = (v_1,\ldots,v_n)$.

For $\cM = (M,V) \in \decrep_{\bd,\bv}(\LL)$ define
$g.\cM := (g.M,V)$.
This defines a $G_\bd$-action on $\decrep_{\bd,\bv}(\LL)$.
The $G_\bd$-orbit of $\cM$ is denoted by $\cO(\cM)$.
We have
\begin{equation}\label{orbitdim}
\dim \cO(\cM) = \dim \cO(M) = \dim G_\bd - \dim\End_\LL(M),
\end{equation}
see for example \cite{G}.

\subsection{Quiver Grassmannians}
Let $\LL = \CQ{Q}/I$ be a basic algebra.
For a representation $M = (M_i,M_a)_{i \in Q_0, a \in Q_1}$ 
of $\LL$ and $\be \in \N^n$
let $\Gr_\be(M)$ be the \emph{quiver Grassmannian}
of subrepresentations $U$ of $M$ with $\dimv(U) = \be$.
(By definition a \emph{subrepresentation} of $M$ is a tuple 
$U = (U_i)_{i \in Q_0}$ of subspaces $U_i \subseteq M_i$ such that
$M_a(U_{s(a)}) \subseteq U_{t(a)}$
for all $a \in Q_1$.)
So $\Gr_\be(M)$ is a projective variety, which can be seen
as a closed subvariety of the product of the classical Grassmannians $\Gr_{e_i}(M_i)$ of $e_i$-dimensional
subspaces of $M_i$, where $\be = (e_1,\ldots,e_n)$.
Let $\chi(\Gr_\be(M))$ be the Euler-Poincar\'e 
characteristic of $\Gr_\be(M)$.

\subsection{Truncations of basic algebras}\label{sectiontruncation}
For a basic algebra $\LL = \CQ{Q}/I$ and some
$p \ge 2$ let
$$
\LL_p := \CQ{Q}/(I+\m^p)
$$
be 
the $p$-\emph{truncation} of $\LL$.
Clearly, $\LL_p$ is a finite-dimensional basic algebra. 
Let
$$
\overline{\LL} := \CQ{Q}/\overline{I}
$$
where 
$$
\overline{I} := \bigcap_{p \ge 0} (I+\m^p)
$$
is the $\m$-adic closure of $I$ in $\CQ{Q}$.
We obtain the
following commutative diagram with exact rows, where all
morphisms, whose label contains the symbol $\iota$ (resp. $\pi$) 
are the obvious canonical monomorphisms (resp. epimorphisms).
$$
\xymatrix{
I \ar[r]^{\iota}\ar[d]_{\overline{\iota}} & \CQ{Q} \ar[r]^{\pi}\ar@{=}[d] & \LL \ar[d]^{\overline{\pi}}\\
\overline{I} \ar[r]^{\iota_\infty}\ar@/_4pc/[dd]_{\iota_{\infty,p}} & \CQ{Q} \ar[r]^{\pi_\infty}
& \overline{\LL}\ar@/^4pc/[dd]^{\pi_{\infty,p}}\\
\vdots \ar[d]_{\iota_{p+1,p}}& \vdots \ar@{=}[d]& \vdots \ar[d]^{\pi_{p+1,p}}\\
I+\m^p \ar[r]^{\iota_p}\ar[d]_{\iota_{p,p-1}} & \CQ{Q} \ar[r]^{\pi_p}\ar@{=}[d] & 
\LL_p \ar[d]^{\pi_{p,p-1}}\\
I+\m^{p-1} \ar[r]^{\iota_{p-1}}\ar[d]_{\iota_{p-1,p-2}} & \CQ{Q} \ar[r]^{\pi_{p-1}}\ar@{=}[d] & 
\LL_{p-1} \ar[d]^{\pi_{p-1,p-2}}\\
\vdots \ar[d]_{\iota_{3,2}}& \vdots \ar@{=}[d]& \vdots \ar[d]^{\pi_{3,2}} \\
I+\m^2 \ar[r]^{\iota_2} & \CQ{Q} \ar[r]^{\pi_2} & 
\LL_2
}
$$
We have
$$
\overline{\LL} =
\underleftarrow{\lim}(\LL_p),
$$
i.e.
the algebra $\overline{\LL}$ is the inverse limit of
the inverse system 
$$
(\cdots \to \LL_p \to \cdots \to \LL_3 \to \LL_2).
$$
The epimorphisms in the third column of the above diagram induce  sequences
$$
\rep(\LL_2) \to \cdots \to \rep(\LL_p) \to \cdots 
\rep(\overline{\LL}) \to \rep(\LL)
$$
and
$$
\decrep(\LL_2) \to \cdots \to \decrep(\LL_p) \to \cdots 
\decrep(\overline{\LL}) \to \decrep(\LL)
$$
of embedding functors.
We can consider these as chains of subcategories of
$\rep(\LL)$ and $\decrep(\LL)$, respectively.
The following lemma is straightforward.

\begin{Lem}
For any basic algebra $\LL$ we have
$$
\rep(\LL) = \rep(\overline{\LL}) =
\bigcup_{p \ge 2} \rep(\LL_p)
\text{\;\;\; and \;\;\;}
\decrep(\LL) = \decrep(\overline{\LL}) =
\bigcup_{p \ge 2} \decrep(\LL_p).
$$
\end{Lem}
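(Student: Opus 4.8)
The plan is to unwind the definitions and show that each of the three categories coincides by a "finiteness" argument: every finite-dimensional representation is annihilated by a high enough power of the arrow ideal, and a decorated representation adds no new subtlety since the vector space part carries no module structure.

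First I would treat the module (non-decorated) case. A representation $M$ of $\LL = \CQ{Q}/I$ is by definition a nilpotent representation of $Q$ annihilated by $I$. Nilpotency means there is $N > 0$ with $M_p = 0$ for all paths $p$ of length $> N$; equivalently, $M$ is annihilated by $\m^{N+1}$. Hence $M$ is annihilated by $I + \m^{N+1}$, so $M$ lies in $\rep(\LL_{N+1}) \subseteq \bigcup_{p \ge 2} \rep(\LL_p)$. Conversely each $\LL_p = \CQ{Q}/(I + \m^p)$ is a quotient of $\LL$, so every $\LL_p$-module is an $\LL$-module, and moreover it is automatically nilpotent (it is killed by $\m^p$) and killed by $I$; thus $\rep(\LL_p) \subseteq \rep(\LL)$ for all $p$. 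This gives $\rep(\LL) = \bigcup_{p \ge 2} \rep(\LL_p)$. For the equality with $\rep(\overline{\LL})$: since $\overline{\LL} = \CQ{Q}/\overline{I}$ with $\overline{I} = \bigcap_{p}(I + \m^p) \supseteq I$, the algebra $\overline{\LL}$ is a quotient of $\LL$, so $\rep(\overline{\LL}) \subseteq \rep(\LL)$. For the reverse inclusion, take $M \in \rep(\LL)$; it is nilpotent, annihilated by some $\m^{N+1}$, and annihilated by $I$, hence annihilated by $I + \m^{N+1} \supseteq \overline{I}$ (indeed $\overline{I} \subseteq I + \m^{N+1}$), so $M \in \rep(\overline{\LL})$. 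This settles the first chain of equalities.

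Next I would handle the decorated case. A decorated representation $\cM = (M, V)$ of $\LL$ consists of a representation $M$ of $\LL$ together with a tuple $V = (V_1, \dots, V_n)$ of finite-dimensional vector spaces on which $\LL$ does not act at all. Since the category structure and the notion of being "a representation of $\LL$" only constrain the $M$-part, we have $\cM \in \decrep(\LL')$ if and only if $M \in \rep(\LL')$, for $\LL' \in \{\LL, \overline{\LL}, \LL_p\}$. Therefore the decorated statement follows immediately from the module statement just proved: $\decrep(\LL) = \decrep(\overline{\LL}) = \bigcup_{p \ge 2}\decrep(\LL_p)$, where the union is interpreted via the chain of embedding functors described above.

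There is no real obstacle here; the only point requiring a moment's care is the identification of nilpotency with annihilation by a power of $\m$ and the observation that $\overline{I} \subseteq I + \m^p$ for every $p$, which is immediate from the definition of $\overline{I}$ as the intersection. One should also note that the unions are genuinely increasing (since $\iota_{p+1,p}$ makes $I + \m^{p+1} \subseteq I + \m^p$, hence $\rep(\LL_p) \subseteq \rep(\LL_{p+1})$ via the epimorphism $\pi_{p+1,p}$), so "$\bigcup$" is the directed union along the chain of embedding functors, consistent with the diagram preceding the lemma.
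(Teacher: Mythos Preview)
Your argument is correct and is precisely the routine verification the paper has in mind; the paper itself gives no proof, declaring the lemma ``straightforward.'' The only detail worth noting is that you have correctly identified the key points (nilpotency gives annihilation by some $\m^{N+1}$, and $\overline{I} \subseteq I + \m^p$ for every $p$), so nothing is missing.
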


For $1 \le i \le n$ and $p \ge 2$ let $I_{i,p} \in \rep(\LL_p)$,
$\overline{I}_i \in {\rm Mod}(\overline{\LL})$ and 
$I_i \in {\rm Mod}(\LL)$ be the
injective envelopes of the simple module $S_i$.
The above embedding functors yield a chain
$$
I_{i,2} \subseteq I_{i,3} \subseteq \cdots \subseteq I_{i,p} \subseteq \cdots \overline{I}_i
$$
of submodules of $\overline{I}_i$, and we have
$$
\overline{I}_i = \bigcup_{p \ge 2} I_{i,p}.
$$

\begin{Lem}\label{trunclemma}
Let $\LL = \CQ{Q}/I$ be a basic algebra.
Then the following hold:
\begin{itemize}

\item[(i)]
Let $\cM = (M,V) \in \decrep(\LL)$.
If $p \ge \dim(M)$, then $\cM$ is in the image of the embedding $\decrep(\LL_p) \to \decrep(\LL)$.

\item[(ii)]
Let $M,N \in \rep(\LL)$.
If $p  \ge \dim(M),\dim(N)$, then 
$$
\dim \Hom_{\LL_p}(M,N) = \dim \Hom_\LL(M,N).
$$

\item[(iii)]
Let $M,N \in \rep(\LL)$.
If $p  \ge \dim(M)+\dim(N)$, then 
$$
\dim \Ext_{\LL_p}^1(M,N) = \dim \Ext_\LL^1(M,N).
$$

\item[(iv)]
Let $(\bd,\bv)  \in \N^n \times \N^n$.
If $p \ge |\bd|$, then
$\decrep_{\bd,\bv}(\LL_p) = \decrep_{\bd,\bv}(\LL)$.

\end{itemize}
\end{Lem}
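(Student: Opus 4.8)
The plan is to deduce (ii), (iii) and (iv) from (i) together with one elementary observation about nilpotent representations: if $M$ is a nilpotent representation of $Q$, then $\m^{\dim(M)}M=0$. Indeed, nilpotency gives $\m^N M=0$ for some $N$, hence $\m M\subsetneq M$ whenever $M\neq 0$, hence $\dim(\m^{k+1}M)<\dim(\m^k M)$ as long as $\m^k M\neq 0$, so the descending chain $M\supseteq\m M\supseteq\m^2 M\supseteq\cdots$ reaches $0$ after at most $\dim(M)$ steps. Now for (i), let $\cM=(M,V)\in\decrep(\LL)$ with $p\ge\dim(M)$. Since $M$ is a representation of $\LL=\CQ{Q}/I$ it is nilpotent and $IM=0$, so by the observation $\m^p M\subseteq\m^{\dim(M)}M=0$ and therefore $(I+\m^p)M=0$, i.e. $M\in\rep(\LL_p)$. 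The decoration $V$ imposes no condition, so $\cM$ lies in the image of $\decrep(\LL_p)\hookrightarrow\decrep(\LL)$.

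For (ii) one first notes that the surjection $\LL\twoheadrightarrow\LL_p$ realises $\rep(\LL_p)=\md(\LL_p)$ as a \emph{full} subcategory of $\rep(\LL)=\md(\LL)$: a $\C$-linear map between two $\LL_p$-modules is $\LL$-linear if and only if it is $\LL_p$-linear, because the $\LL$-action on such modules factors through $\LL_p$. Applying (i) to the undecorated representations $M$ and $N$ separately, the hypothesis $p\ge\dim(M),\dim(N)$ gives $M,N\in\rep(\LL_p)$, whence $\Hom_{\LL_p}(M,N)=\Hom_\LL(M,N)$ and in particular the two dimensions agree.

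Part (iii) is the step that needs an idea, and I expect it to be the main obstacle, since $\Ext^1$ is not preserved under an arbitrary surjection of algebras and the sharper bound $p\ge\dim(M)+\dim(N)$ must be used essentially. I would view $\Ext^1$ as the group of equivalence classes of short exact sequences and compare $\Ext^1_{\LL_p}(M,N)$ with $\Ext^1_\LL(M,N)$ through restriction of scalars along $\LL\twoheadrightarrow\LL_p$, which gives a group homomorphism $\Phi$. It is injective because any $\LL$-linear splitting of (the restriction of) an $\LL_p$-extension is automatically $\LL_p$-linear, by the same factoring-through argument as in (ii). It is surjective because, given an $\LL$-extension $0\to N\to E\to M\to 0$, the middle term $E$ is a nilpotent $\LL$-module with $\dim(E)=\dim(M)+\dim(N)\le p$, hence $E\in\rep(\LL_p)$ by the argument of (i); thus the whole sequence already lies in $\md(\LL_p)$ and represents a preimage of the given class. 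Therefore $\Phi$ is an isomorphism and $\dim\Ext^1_{\LL_p}(M,N)=\dim\Ext^1_\LL(M,N)$.

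For (iv), since $\decrep_{\bd,\bv}(-)$ only appends the fixed decoration $\C^\bv$, it suffices to prove $\rep_\bd(\LL)=\rep_\bd(\LL_p)$ for $p\ge|\bd|$. One inclusion is immediate from $I\subseteq I+\m^p$. Conversely, if $M\in\rep_\bd(\LL)$ then $\dim(M)=|\bd|\le p$, so $M\in\rep(\LL_p)$ by (i), with the same dimension vector, i.e. $M\in\rep_\bd(\LL_p)$. As closed subsets of $\rep_\bd(Q)$ the two in fact have the same defining equations: for a fixed dimension vector both $\rep_\bd(\LL)$ and $\rep_\bd(\LL_p)$ are cut out by the relations coming from $I+\m^{|\bd|}$, the extra relations from $\m^p$ being redundant once $p\ge|\bd|$.
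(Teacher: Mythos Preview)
Your proof is correct and follows the same overall strategy as the paper: establish that $\m^{\dim(M)}M=0$ for any nilpotent representation $M$, then deduce (i)--(iv). The only notable difference is in how this key observation is justified. The paper argues directly with paths: given a path $a_m\cdots a_1$ and a nonzero $v_0$, it shows that if $v_i:=a_i\cdots a_1 v_0$ are all nonzero then $v_0,\dots,v_m$ are linearly independent (using a longest-path trick), forcing $m<\dim(M)$. Your argument via the strictly descending radical filtration $M\supsetneq\m M\supsetneq\cdots$ is shorter and more conceptual; it is essentially Nakayama's lemma for nilpotent modules. Your treatment of (iii) is also more explicit than the paper's one-line remark, spelling out both injectivity and surjectivity of the comparison map on extension classes.
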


\begin{proof}
Let $a_m \cdots a_1$ be a path of length $m$ in $Q$,
and let $M$ be a
representation of $\LL$.
We can see $M$ as a nilpotent representation of $Q$.
For any non-zero vector $v_0 \in M$ set 
$v_i := a_i \cdots a_1v_0$ for $1 \le i \le m$.
Assume that each of the vectors $v_1,\ldots,v_m$ is non-zero.
We claim that $v_0,v_1,\ldots,v_m$ are pairwise different and
linearly independent.
Let  
$b$ be a path of maximal length such that $bv_0 \not= 0$.
Such a path $b$ exists, because $M$ is nilpotent.
By induction $v_1,\ldots,v_m$ are linearly independent.
Assume now that
$$
v_0 = \sum_{i=1}^m \lambda_i v_i 
$$
for some $\lambda_i \in \C$.
We have $v_i = a_i \cdots a_1v_0$.
Therefore we get
$$
bv_0 = \sum_{i=1}^m \lambda_i ba_i \cdots a_1v_0.
$$
Since $b a_i\cdots a_1$ is either zero or a path of length
$i+\length(b)$, we have $ba_i \cdots a_1v_0 = 0$
for all $1 \le i \le m$.
Since $bv_0 \not= 0$, this is a contradiction.
Therefore $v_0,v_1,\ldots,v_m$ are linearly independent.
It follows that for any $\cM \in \decrep(\LL)$ with
$\dimv(\cM) = (\bd,\bv)$ 
and any path $b$ with 
$\length(b) \ge |\bd|$ we have $b\cM = 0$.
This implies (i). 
Parts (ii) and (iv) are easy consequences of (i).
Any extension of representations $M$ and $N$ of $\LL$
is a representation of $\LL$ of dimension $\dim(M) + \dim(N)$.
This implies (iii).
\end{proof}


\section{$E$-invariants and $g$-vectors of decorated
representations}\label{sec3}


\subsection{Definition of $E$-invariants and $g$-vectors}\label{sec3.1}
Let $Q$ be a quiver, and let $W$ be a potential of $Q$. 
Let $\LL = \cP(Q,W)$ be the associated
Jacobian algebra \cite[Section~3]{DWZ1}.
For decorated representations $\cM$ and $\cN$ of $\LL$
the $g$-\emph{vector} $g(\cM)$ and the invariants
$E^\inj(\cM)$ and $E^\inj(\cM,\cN)$
were defined in \cite{DWZ2}, where
$E^\inj(\cM)$ is called the $E$-\emph{invariant}
of $\cM$.
We define invariants $g_\LL(\cM)$, $E_\LL(\cM)$ and
$E_\LL(\cM,\cN)$ of decorated representations $\cM$ and $\cN$ of an arbitrary basic algebra $\LL = \CQ{Q}/I$ as follows.

For a decorated representation $\cM = (M,V)$ of
$\LL$ let
$$
g_\LL(\cM) := (g_1,\ldots,g_n)
$$
with 
$$
g_i := g_i(\cM) := 
-\dim \Hom_\LL(S_i,M) + \dim \Ext_\LL^1(S_i,M) + \dim(V_i)
$$
be the $g$-\emph{vector} of $\cM$.

For decorated representations $\cM = (M,V)$ and
$\cN = (N,W)$ of $\LL$ let
$$
E_\LL(\cM,\cN) := \dim \Hom_\LL(M,N) + \sum_{i=1}^n
\dim(M_i)g_i(\cN).
$$
The $E$-\emph{invariant} of $\cM$ is defined as
$E_\LL(\cM) := E_\LL(\cM,\cM)$.

\begin{Lem}
Let $\LL = \cP(Q,W)$, where $W$ is a potential
of $Q$.
For $\cM,\cN \in \decrep(\LL)$ the
following hold:
\begin{itemize}

\item[(i)]
$g_\LL(\cM) = g(\cM)$.

\item[(ii)]
$E_\LL(\cM,\cN) = E^\inj(\cM,\cN)$.

\end{itemize}
\end{Lem}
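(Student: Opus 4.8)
The plan is to reduce this to the definitions given in Derksen--Weyman--Zelevinsky \cite{DWZ2} for Jacobian algebras and to match them term by term with the definitions of $g_\LL$ and $E_\LL$ stated above. First, for part (i), recall that in \cite{DWZ2} the $g$-vector $g(\cM) = (g_1,\ldots,g_n)$ of a decorated representation $\cM = (M,V)$ is defined via a minimal presentation of $M$: one writes a minimal projective presentation $P_1 \to P_0 \to M \to 0$ in $\rep(\LL)$, and $g_i(\cM)$ is the difference of the multiplicities of the indecomposable projective $P_i$ in $P_0$ and $P_1$, corrected by $\dim(V_i)$. The point is to translate this homological quantity into the $\Hom$/$\Ext^1$ formula: for the simple $S_i$ one has $\dim\Hom_\LL(S_i,M)$ equal to the multiplicity of $P_i$ in $P_0$ (this is the dimension of $\Hom_\LL(P_i/\rad P_i, M) = \Hom_\LL(S_i, M)$, matching the number of generators of $M$ at vertex $i$), while $\dim\Ext^1_\LL(S_i,M)$ counts, via the same minimal presentation, the multiplicity of $P_i$ appearing in $P_1$ minus the contribution already accounted for; a short diagram chase with the long exact sequence $\Hom(P_0,M)\to\Hom(P_1,M)\to\Ext^1(S_i,M)\to 0$ (using $\Ext^1(P_0,M)=0$ and minimality, so $\Hom(\rad P_0, M)$ has no part factoring through $S_i$) pins down $\dim\Ext^1_\LL(S_i,M)$ as exactly the multiplicity of $P_i$ in $P_1$. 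Subtracting gives $-\dim\Hom_\LL(S_i,M) + \dim\Ext^1_\LL(S_i,M) = [P_1:P_i] - [P_0:P_i]$, and adding $\dim(V_i)$ recovers the DWZ definition, proving (i).

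Next, for part (ii), I would simply substitute (i) into the definition of $E_\LL(\cM,\cN)$ and compare with the DWZ definition of $E^\inj(\cM,\cN)$. In \cite{DWZ2} the invariant $E^\inj(\cM,\cN)$ is defined (for Jacobian algebras, in terms of the injective presentation side of the mutation story) in a form equivalent to $\dim\Hom_\LL(M,N) + \langle \dimv(M), g(\cN)\rangle$ where the pairing is the standard dot product $\sum_{i=1}^n \dim(M_i) g_i(\cN)$; more precisely DWZ show $E^\inj(\cM,\cN) = \dim\Hom_\LL(M,N) + \sum_i \dim(M_i) g_i(\cN)$ (see \cite[Section~10]{DWZ2}, combining their Lemma on the relation between $E^{\inj}$ and $g$-vectors with the half-exactness identities). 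Since we have already identified $g_i(\cN) = g_i^{\mathrm{DWZ}}(\cN)$ in (i), the two expressions literally coincide, and taking $\cN = \cM$ gives $E_\LL(\cM) = E^\inj(\cM)$ as well.

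The main obstacle I anticipate is purely bookkeeping: \cite{DWZ2} phrases the $E$-invariant through the triangulated-category / presentation-space language (spaces of the form $\operatorname{Hom}(P_1, N)$ and the "$E$" computed as a dimension of a certain homotopy-theoretic $\operatorname{Hom}$), and one has to be careful to cite the exact lemma there that rewrites $E^\inj$ in the closed form $\dim\Hom(M,N) + \langle\dimv M, g(\cN)\rangle$ rather than re-deriving it. Once that citation is in place, and once the minimal-presentation computation of $g_i$ in part (i) is written out carefully (the only genuinely nontrivial step, requiring that the presentation be \emph{minimal} so that no spurious copies of $P_i$ contribute), the rest is immediate. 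Everything uses only finite-dimensional modules over $\LL$, so there are no completion subtleties here; the truncation machinery of Section~\ref{sectiontruncation} is not needed for this particular lemma, since $\cP(Q,W)$ is treated directly via its finite-dimensional representations.
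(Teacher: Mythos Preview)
Your reduction for part (ii) is exactly what the paper does: once (i) is known, both sides of (ii) are literally the same expression. So the only content is in (i), and there your argument has a genuine gap.

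The problem is a projective/injective mix-up. You assert that $\dim\Hom_\LL(S_i,M)$ equals the multiplicity of $P_i$ in $P_0$ for a minimal projective presentation $P_1 \to P_0 \to M \to 0$. This is false: $\Hom_\LL(S_i,M)$ detects the $i$th component of the \emph{socle} of $M$, whereas the multiplicity of $P_i$ in a minimal projective cover is $[\tp(M):S_i] = \dim\Hom_\LL(M,S_i)$. Likewise $\dim\Ext^1_\LL(S_i,M)$ is not the multiplicity of $P_i$ in $P_1$; that multiplicity is $\dim\Ext^1_\LL(M,S_i)$. The quantity $g_i$ defined in this paper is governed by a minimal \emph{injective} presentation (this is precisely Lemma~\ref{HomologicalE1}), not a projective one. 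So the whole minimal-projective-presentation computation you outline proves an identity about a different vector.

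A second issue is that Derksen--Weyman--Zelevinsky do \emph{not} define $g(\cM)$ via a minimal projective presentation; their definition is in terms of the linear-algebra data of the decorated representation at each vertex (the maps $\alpha_i,\beta_i,\gamma_i$), and $\cP(Q,W)$ may well be infinite dimensional, so projective covers of finite-dimensional modules need not behave as you want anyway. The paper's route is either to cite Plamondon \cite[Lemma~4.7, Proposition~4.8]{P1}, who makes the translation to the homological language, or to argue directly from the exact sequence \cite[Equation~(10.4)]{DWZ2}, which at each vertex $i$ expresses the DWZ quantity in a form from which one can read off $\dim\Hom_\LL(S_i,M)$ and $\dim\Ext^1_\LL(S_i,M)$. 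If you want a self-contained argument, work with that exact sequence rather than with projective presentations.
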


\begin{proof}
Part (i) follows from \cite[Lemma~4.7, Proposition~4.8]{P1}.
It can also be shown in a more elementary way by using
the exact sequence displayed in
\cite[Equation~(10.4)]{DWZ2}.
Part (ii) is a direct consequence of (i) and the definition of
$E_\LL(\cM,\cN)$ and
$E^\inj(\cM,\cN)$.
\end{proof}

\subsection{Homological interpretation of the $E$-invariant}
For $1 \le i \le n$ let $I_i \in \Mod(\LL)$ be the injective envelope of the
simple representation $S_i$ of $\LL$.
One easily checks that the socle $\soc(I_i)$ of $I_i$ is
isomorphic to $S_i$, and that
\begin{equation}\label{dimMi}
\dim \Hom_\LL(M,I_i) = \dim(M_i)
\end{equation}
for all $M \in \rep(\LL)$.
Note that 
in general $I_i$ is infinite dimensional.
For $M \in \rep(\LL)$ let
$$
0 \to M \xra{f} I_0^\LL(M) \to I_1^\LL(M)
$$
denote a minimal injective presentation of $M$.
The modules $I_0^\LL(M)$ and $I_1^\LL(M)$ are 
up to isomorphism uniquely determined by $M$.

We will need the following theorem due to Auslander and Reiten.

\begin{Thm}[{\cite[Theorem~1.4 (b)]{AR}}]\label{AR}
Let $M$ and $N$ be representations of a finite-dimensional basic
algebra $\LL$.
Then we have
\begin{multline*}
\dim \Hom_\LL(\tau_\LL^-(N),M) =
\dim \Hom_\LL(M,N) - \dim \Hom_\LL(M,I_0^\LL(N)) \\ 
+ \dim \Hom_\LL(M,I_1^\LL(N)).
\end{multline*}
\end{Thm}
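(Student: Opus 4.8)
The statement is [AR, Theorem 1.4 (b)], a classical identity of Auslander--Reiten theory over a finite-dimensional algebra, so I only indicate the argument. Write $D := \Hom_\C(-,\C)$ for the standard $\C$-duality on $\md(\LL)$, let $\nu := D\Hom_\LL(-,\LL)$ be the Nakayama functor, and let $\nu^{-1} := \Hom_\LL(D\LL,-)$. The inputs I would use from the finite-dimensional theory are: for an injective module $I$ the module $\nu^{-1}(I)$ is projective and $\nu\nu^{-1}(I) \cong I$; and there is a natural isomorphism $\Hom_\LL(M,\nu(P)) \cong D\Hom_\LL(P,M)$ for $P$ projective, which combined with the previous fact gives a natural isomorphism $\Hom_\LL(\nu^{-1}(I),M) \cong D\Hom_\LL(M,I)$, natural in $I$. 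Recall finally that, by the very definition of $\tau_\LL^-$ via transpose-dual, if $0 \to N \xra{\,f\,} I_0^\LL(N) \xra{\,c\,} I_1^\LL(N)$ is the minimal injective presentation of $N$, then $\nu^{-1}(I_0^\LL(N)) \xra{\nu^{-1}(c)} \nu^{-1}(I_1^\LL(N)) \to \tau_\LL^-(N) \to 0$ is a projective presentation of $\tau_\LL^-(N)$.

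The plan is then a short diagram chase. First I would apply the (contravariant, left-exact) functor $\Hom_\LL(-,M)$ to this projective presentation; left-exactness identifies $\Hom_\LL(\tau_\LL^-(N),M)$ with the kernel of $\Hom_\LL(\nu^{-1}(c),M)$. Transporting along the natural isomorphism above, $\Hom_\LL(\nu^{-1}(c),M)$ becomes $D\bigl(\Hom_\LL(M,c)\bigr)$, so its kernel is $D\bigl(\Coker(\Hom_\LL(M,c))\bigr)$; in particular
$$
\dim\Hom_\LL(\tau_\LL^-(N),M) = \dim\Coker\bigl(\Hom_\LL(M,I_0^\LL(N)) \xra{\Hom_\LL(M,c)} \Hom_\LL(M,I_1^\LL(N))\bigr).
$$
Next I would apply the (covariant, left-exact) functor $\Hom_\LL(M,-)$ to $0 \to N \xra{\,f\,} I_0^\LL(N) \xra{\,c\,} I_1^\LL(N)$; exactness at the first two terms gives $\Ker(\Hom_\LL(M,c)) = \operatorname{Im}(\Hom_\LL(M,f)) \cong \Hom_\LL(M,N)$, whence the cokernel in the display above has dimension $\dim\Hom_\LL(M,I_1^\LL(N)) - \dim\Hom_\LL(M,I_0^\LL(N)) + \dim\Hom_\LL(M,N)$. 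Comparing the two computations of that cokernel's dimension yields the asserted equality.

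The one point that genuinely needs care is the naturality of $\Hom_\LL(\nu^{-1}(I),M) \cong D\Hom_\LL(M,I)$ in the variable $I$: this is exactly what lets one replace $\nu^{-1}(c)$ by $c$, and it is where finite-dimensionality of $\LL$ enters (so that $D$ is a duality and $\nu^{-1}$ carries injectives to projectives). Note that no $\Ext$-vanishing is required — only left-exactness of the two Hom-functors. As an alternative route, applying $D$ everywhere reduces the statement to its dual, $\dim\Hom_\LL(N,\tau_\LL(M)) = \dim\Hom_\LL(M,N) - \dim\Hom_\LL(P_0^\LL(M),N) + \dim\Hom_\LL(P_1^\LL(M),N)$ over $\LL^{\op}$ (here $P_0^\LL(M) \to M$ a projective cover and $P_1^\LL(M) \to \Ker$ the next one in a minimal projective presentation), which is proved by the identical chase with a minimal projective presentation of $M$ in place of the injective copresentation of $N$.
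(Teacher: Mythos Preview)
Your argument is correct. Note, however, that the paper does not give its own proof of this statement: it is simply quoted as \cite[Theorem~1.4(b)]{AR} and used as a black box. What you have written is the standard derivation via the Nakayama functor and the definition of $\tau_\LL^-$, and each step checks out (in particular, your use of naturality of $\Hom_\LL(\nu^{-1}(I),M)\cong D\Hom_\LL(M,I)$ to transport $\nu^{-1}(c)$ to $D\Hom_\LL(M,c)$, and the identification $\Ker(Dg)=D\Coker(g)$ for a linear map $g$ between finite-dimensional spaces). So there is nothing to compare against here; your sketch simply supplies a proof where the paper offers a citation.
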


\begin{Lem}\label{HomologicalE1}
Let $\LL = \CQ{Q}/I$ be a finite-dimensional basic algebra, and let $M \in \rep(\LL)$.
Let 
$$
0 \to M \xra{f} I_0^\LL(M) \to I_1^\LL(M)
$$ 
be a minimal injective
presentation of $M$.
Then for $1 \le i \le n$ we have
\begin{itemize}

\item[(i)]
$[\soc(I_0^\LL(M)):S_i] = [\soc(M):S_i] = \dim \Hom_\LL(S_i,M)$ and
$$
I_0^\LL(M) \cong I_1^{\dim \Hom_\LL(S_1,M)} \oplus \cdots \oplus
I_n^{\dim \Hom_\LL(S_n,M)}.
$$

\item[(ii)]
$[\soc(I_1^\LL(M)):S_i] = [\soc(\Coker(f)):S_i] = 
\dim \Ext_\LL^1(S_i,M)$ and
$$
I_1^\LL(M) \cong I_1^{\dim \Ext_\LL^1(S_1,M)} \oplus \cdots \oplus
I_n^{\dim \Ext_\LL^1(S_n,M)}.
$$

\end{itemize}
\end{Lem}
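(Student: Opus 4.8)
The statement to prove is Lemma~\ref{HomologicalE1}, which describes the minimal injective presentation $0 \to M \xra{f} I_0^\LL(M) \to I_1^\LL(M)$ of a module $M$ over a finite-dimensional basic algebra $\LL$ in terms of socles and $\Ext$-groups. My plan is to reduce everything to two elementary facts: first, that over a basic algebra the indecomposable injectives are exactly $I_1,\ldots,I_n$ with $\soc(I_i) \cong S_i$; and second, that a minimal injective \emph{envelope} $0 \to X \to I_0^\LL(X)$ induces an isomorphism on socles $\soc(X) \cong \soc(I_0^\LL(X))$. The latter is standard: the inclusion $\soc(X) \hookrightarrow \soc(I_0^\LL(X))$ is essential (since $X \hookrightarrow I_0^\LL(X)$ is) and $\soc(I_0^\LL(X))$ is semisimple, so an essential submodule of a semisimple module is the whole thing. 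Combined with the Krull--Schmidt decomposition of an injective module as a direct sum of the $I_i$, each appearing with multiplicity $[\soc(I):S_i]$, this already yields the displayed direct-sum decompositions once the socle multiplicities are identified.

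For part (i): applying $\Hom_\LL(S_i,-)$ is left exact, and since $S_i$ is simple any nonzero map $S_i \to M$ lands in $\soc(M)$, so $\dim\Hom_\LL(S_i,M) = [\soc(M):S_i]$. By the socle-preservation fact applied to the envelope $M \hookrightarrow I_0^\LL(M)$ we get $[\soc(I_0^\LL(M)):S_i] = [\soc(M):S_i] = \dim\Hom_\LL(S_i,M)$, and then the Krull--Schmidt decomposition of the injective $I_0^\LL(M)$ gives the asserted formula $I_0^\LL(M) \cong \bigoplus_i I_i^{\dim\Hom_\LL(S_i,M)}$.

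For part (ii): minimality of the injective presentation means that $I_1^\LL(M)$ is the injective envelope of $\Coker(f)$, so the same socle-preservation fact gives $[\soc(I_1^\LL(M)):S_i] = [\soc(\Coker(f)):S_i]$. It remains to identify $[\soc(\Coker(f)):S_i]$ with $\dim\Ext_\LL^1(S_i,M)$. I would apply $\Hom_\LL(S_i,-)$ to the short exact sequence $0 \to M \to I_0^\LL(M) \to \Coker(f) \to 0$: since $\Hom_\LL(S_i,M) \xrightarrow{\sim} \Hom_\LL(S_i,I_0^\LL(M))$ (both equal $[\soc(M):S_i]$ by part (i)) and $\Ext_\LL^1(S_i,I_0^\LL(M)) = 0$ (injectivity), the long exact sequence collapses to $0 \to \Hom_\LL(S_i,\Coker(f)) \to \Ext_\LL^1(S_i,M) \to 0$, whence $\dim\Ext_\LL^1(S_i,M) = \dim\Hom_\LL(S_i,\Coker(f)) = [\soc(\Coker(f)):S_i]$. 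Again Krull--Schmidt gives the direct-sum decomposition.

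The argument is essentially a bookkeeping exercise and I do not anticipate a serious obstacle; the only point requiring mild care is the precise meaning of minimality of the injective presentation (that $I_0^\LL(M)$ is the injective envelope of $M$ and $I_1^\LL(M)$ that of $\Coker(f)$), which must be invoked to pass between socles of $I_0^\LL(M), I_1^\LL(M)$ and socles of $M, \Coker(f)$. One should also note that although $\LL$ is only assumed basic and the $I_i$ may in principle be large, finite-dimensionality of $\LL$ guarantees the $I_i$ are finite-dimensional, so Krull--Schmidt applies without qualification. Note that the Auslander--Reiten formula (Theorem~\ref{AR}) stated just above is \emph{not} needed for this particular lemma; it will presumably be combined with Lemma~\ref{HomologicalE1} in the sequel to give a homological interpretation of $E_\LL(\cM,\cN)$.
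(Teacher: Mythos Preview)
Your proposal is correct and follows essentially the same route as the paper's proof: both use that $I_0^\LL(M)$ is the injective envelope of $M$ (so $\soc(M)\cong\soc(I_0^\LL(M))$), that $I_1^\LL(M)$ is the injective envelope of $\Coker(f)$, and then apply $\Hom_\LL(S_i,-)$ to the short exact sequence $0\to M\to I_0^\LL(M)\to\Coker(f)\to 0$, using injectivity of $I_0^\LL(M)$ and the isomorphism from part~(i) to identify $\Hom_\LL(S_i,\Coker(f))\cong\Ext_\LL^1(S_i,M)$. Your write-up is in fact slightly more explicit than the paper's about the Krull--Schmidt step and the reason the socle is preserved under an essential extension.
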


\begin{proof}
Since $I_0^\LL(M)$ is the injective envelope  of $M$,
we have $\soc(M) \cong \soc(I_0^\LL(M))$.
This implies (i).
By the construction of minimal injective presentations,
$I_1^\LL(M)$ is the injective envelope  of $\Coker(f)$. 
It follows that $\soc(\Coker(f)) \cong \soc(I_1^\LL(M))$.
We apply the functor $\Hom_\LL(S_i,-)$ to 
the exact sequence
$$
0 \to M \xra{f} I_0^\LL(M) \to \Coker(f) \to 0.
$$
This yields an exact sequence
\begin{multline*}
0 \to \Hom_\LL(S_i,M) \xra{F} \Hom_\LL(S_i,I_0^\LL(M)) \to
\Hom_\LL(S_i,\Coker(f))
\xra{G} 
\Ext_\LL^1(S_i,M) \to 0
\end{multline*}
Here we used that
$I_0^\LL(M)$ is injective, which implies
$\Ext_\LL^1(S_i,I_0^\LL(M)) = 0$.
By (i) we know that $F$ is an isomorphism. 
Thus $G$ is also an isomorphism.
This implies (ii).
\end{proof}

Combinining Lemma~\ref{trunclemma} and Lemma~\ref{HomologicalE1}
yields the following result.

\begin{Lem}\label{lemma3.3}
Let $\cM = (M,V)$ be a decorated representation of a basic algebra $\LL$, and let
$g_\LL(\cM) = (g_1,\ldots,g_n)$ be the $g$-vector of $\cM$.
If $p > \dim(M)$, then
$$
g_i = -[\soc(I_0^{\LL_p}(M)):S_i] + [\soc(I_1^{\LL_p}(M)):S_i] + \dim(V_i)
$$
for all $1 \le i \le n$.
\end{Lem}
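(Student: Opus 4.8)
The plan is to combine the definition of the $g$-vector with the two lemmas cited just before the statement. First I would recall that by definition
$$
g_i = -\dim\Hom_\LL(S_i,M) + \dim\Ext_\LL^1(S_i,M) + \dim(V_i),
$$
so the assertion reduces to showing that for $p > \dim(M)$ one has
$$
\dim\Hom_\LL(S_i,M) = [\soc(I_0^{\LL_p}(M)):S_i]
\quad\text{and}\quad
\dim\Ext_\LL^1(S_i,M) = [\soc(I_1^{\LL_p}(M)):S_i].
$$
Since $S_i$ is one-dimensional and $p > \dim(M) \ge \dim(S_i)$, Lemma~\ref{trunclemma}(ii) gives $\dim\Hom_{\LL_p}(S_i,M) = \dim\Hom_\LL(S_i,M)$, and Lemma~\ref{trunclemma}(iii) with $p > \dim(M) \ge \dim(S_i)+\dim(M)$—here I need $p \ge \dim(S_i)+\dim(M)$, which follows from $\dim(S_i)=1$ and $p>\dim(M)$ when $\dim(M)\ge 1$, and is trivial when $M=0$—gives $\dim\Ext_{\LL_p}^1(S_i,M) = \dim\Ext_\LL^1(S_i,M)$. (One should note the degenerate case $M=0$ separately, where everything is zero and the identity $g_i=\dim(V_i)$ is immediate.)

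Next I would apply Lemma~\ref{HomologicalE1} to the \emph{finite-dimensional} algebra $\LL_p$ and the module $M$, viewed as an $\LL_p$-module via Lemma~\ref{trunclemma}(i) (legitimate since $p \ge \dim(M)$). Part (i) of that lemma yields
$$
[\soc(I_0^{\LL_p}(M)):S_i] = \dim\Hom_{\LL_p}(S_i,M),
$$
and part (ii) yields
$$
[\soc(I_1^{\LL_p}(M)):S_i] = \dim\Ext_{\LL_p}^1(S_i,M).
$$
Chaining these equalities with the truncation comparisons from the previous paragraph gives exactly
$$
g_i = -[\soc(I_0^{\LL_p}(M)):S_i] + [\soc(I_1^{\LL_p}(M)):S_i] + \dim(V_i),
$$
which is the claim.

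There is no real obstacle here; the statement is essentially a bookkeeping assembly of Lemma~\ref{trunclemma} and Lemma~\ref{HomologicalE1}, which is why the text introduces it with the words ``Combining Lemma~\ref{trunclemma} and Lemma~\ref{HomologicalE1} yields the following result.'' The only point requiring a moment's care is checking that the hypothesis $p > \dim(M)$ is strong enough to license \emph{both} the $\Hom$-comparison (needs $p \ge \dim(M)$) and the $\Ext^1$-comparison (needs $p \ge \dim(S_i)+\dim(M) = 1+\dim(M)$), and then treating $M=0$ as a trivial special case; once that is observed, the proof is a one-line substitution.
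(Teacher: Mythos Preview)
Your proposal is correct and follows exactly the approach the paper indicates: the paper does not write out a proof at all, only stating that ``Combining Lemma~\ref{trunclemma} and Lemma~\ref{HomologicalE1} yields the following result,'' and your argument is precisely that combination. One small simplification: since $p$ is an integer, the strict inequality $p > \dim(M)$ already gives $p \ge \dim(M)+1 = \dim(S_i)+\dim(M)$ without any case distinction on whether $M=0$.
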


The following result is a homological interpretation
of the $E$-invariant in terms of Auslander-Reiten translations.
This can be seen as a generalization of \cite[Corollary~10.9]{DWZ2}.

\begin{Prop}\label{HomologicalE2}
Let $\cM = (M,V)$ and $\cN = (N,W)$ be decorated representations of a basic algebra
$\LL$.
If $p > \dim(M),\dim(N)$, then
$$
E_\LL(\cM,\cN) = E_{\LL_p}(\cM,\cN) = \dim \Hom_{\LL_p}(\tau_{\LL_p}^-(N),M) 
+  \sum_{i=1}^n\dim(M_i)\dim(W_i).
$$
In particular, we have
$$
E_{\LL_p}(\cM,\cN) = E_{\LL_q}(\cM,\cN)
$$
and
$$
\dim \Hom_{\LL_p}(\tau_{\LL_p}^-(N),M) = \dim \Hom_{\LL_q}(\tau_{\LL_q}^-(N),M)
$$
for all $p,q > \dim(M),\dim(N)$.
\end{Prop}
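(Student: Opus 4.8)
The plan is to reduce everything to the finite-dimensional truncations $\LL_p$, where the statement becomes an instance of the Auslander--Reiten formula (Theorem~\ref{AR}), and then use Lemma~\ref{trunclemma} to transfer the relevant dimensions between $\LL$ and $\LL_p$. Concretely, fix $p > \dim(M),\dim(N)$. First I would observe that by Lemma~\ref{trunclemma}(i) both $\cM$ and $\cN$ lie in the image of $\decrep(\LL_p)\to\decrep(\LL)$, so it makes sense to compute $E_{\LL_p}(\cM,\cN)$, and by Lemma~\ref{trunclemma}(ii) we have $\dim\Hom_{\LL_p}(M,N) = \dim\Hom_\LL(M,N)$. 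Next, by Lemma~\ref{lemma3.3}, for $p > \dim(N)$ the $g$-vector $g_\LL(\cN) = (g_1,\ldots,g_n)$ satisfies $g_i(\cN) = -[\soc(I_0^{\LL_p}(N)):S_i] + [\soc(I_1^{\LL_p}(N)):S_i] + \dim(W_i)$. Hence $g_\LL(\cN) = g_{\LL_p}(\cN)$, and in particular the two definitions of the $E$-invariant agree: $E_\LL(\cM,\cN) = E_{\LL_p}(\cM,\cN)$. This gives the first equality.

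For the second equality I would work entirely inside the finite-dimensional algebra $\LL_p$, so I may drop the subscript. Using Lemma~\ref{HomologicalE1}, the decompositions $I_0^{\LL_p}(N) \cong \bigoplus_i I_i^{\dim\Hom_{\LL_p}(S_i,N)}$ and $I_1^{\LL_p}(N) \cong \bigoplus_i I_i^{\dim\Ext^1_{\LL_p}(S_i,N)}$ together with the identity \eqref{dimMi}, namely $\dim\Hom_{\LL_p}(M,I_i) = \dim(M_i)$, yield
$$
\dim\Hom_{\LL_p}(M,I_0^{\LL_p}(N)) = \sum_{i=1}^n \dim(M_i)\dim\Hom_{\LL_p}(S_i,N)
$$
and
$$
\dim\Hom_{\LL_p}(M,I_1^{\LL_p}(N)) = \sum_{i=1}^n \dim(M_i)\dim\Ext^1_{\LL_p}(S_i,N).
$$
Plugging these into Theorem~\ref{AR} gives
$$
\dim\Hom_{\LL_p}(\tau_{\LL_p}^-(N),M) = \dim\Hom_{\LL_p}(M,N) + \sum_{i=1}^n \dim(M_i)\bigl(-\dim\Hom_{\LL_p}(S_i,N) + \dim\Ext^1_{\LL_p}(S_i,N)\bigr).
$$
The bracketed term is exactly $g_i(\cN) - \dim(W_i)$ by the definition of the $g$-vector. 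Rearranging, $\dim\Hom_{\LL_p}(M,N) + \sum_i \dim(M_i)g_i(\cN) = \dim\Hom_{\LL_p}(\tau_{\LL_p}^-(N),M) + \sum_i \dim(M_i)\dim(W_i)$, and the left-hand side is $E_{\LL_p}(\cM,\cN)$ by definition. This is the desired formula.

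Finally, the two "in particular" assertions follow immediately: $E_{\LL_p}(\cM,\cN) = E_\LL(\cM,\cN) = E_{\LL_q}(\cM,\cN)$ for any $p,q > \dim(M),\dim(N)$ since the middle term does not depend on $p$, and then the equality $\dim\Hom_{\LL_p}(\tau_{\LL_p}^-(N),M) = \dim\Hom_{\LL_q}(\tau_{\LL_q}^-(N),M)$ follows by subtracting the common term $\sum_i \dim(M_i)\dim(W_i)$ from both sides of the main formula applied at $p$ and at $q$.

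I do not expect a serious obstacle here; the proof is essentially bookkeeping. The one point that needs a little care is making sure the injective envelopes used in Lemma~\ref{HomologicalE1} are those of the finite-dimensional algebra $\LL_p$ (so that they are finite-dimensional and Theorem~\ref{AR} applies), rather than the infinite-dimensional injectives $I_i \in \Mod(\LL)$ — this is precisely why the hypothesis $p > \dim(M),\dim(N)$ is imposed, via Lemma~\ref{trunclemma} and Lemma~\ref{lemma3.3}. One should also confirm that \eqref{dimMi} is being applied over $\LL_p$, which is legitimate since the injective envelope of $S_i$ over $\LL_p$ is $I_{i,p}$ and $\dim\Hom_{\LL_p}(M,I_{i,p}) = \dim(M_i) = [M:S_i]$ for $M \in \rep(\LL_p)$.
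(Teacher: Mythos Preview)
Your proof is correct and follows essentially the same approach as the paper: reduce to the finite-dimensional truncation $\LL_p$ via Lemma~\ref{trunclemma} and Lemma~\ref{lemma3.3}, compute $\dim\Hom_{\LL_p}(M,I_0^{\LL_p}(N))$ and $\dim\Hom_{\LL_p}(M,I_1^{\LL_p}(N))$ using Lemma~\ref{HomologicalE1} together with \eqref{dimMi}, and then apply Theorem~\ref{AR}. Your organization separates the two equalities a bit more cleanly than the paper's chained display, but the content is the same.
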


\begin{proof}
Since $p > \dim(M),\dim(N)$ we can apply
Lemma~\ref{trunclemma} and get
\begin{align*}
\dim \Hom_{\LL_p}(M,N) &= \dim \Hom_\LL(M,N),\\
\dim \Hom_{\LL_p}(S_i,N) &= \dim \Hom_\LL(S_i,N),\\
\dim \Ext_{\LL_p}^1(S_i,N) &= \dim \Ext_\LL^1(S_i,N).
\end{align*}
Let
$$
0 \to N \to I_0^{\LL_p}(N) \to I_1^{\LL_p}(N)
$$
be a minimal injective presentation
of $N$, where we regard $N$ now as a representation
of $\LL_p$.
It follows from
Lemma~\ref{HomologicalE1} and Equation~(\ref{dimMi})
that
\begin{align*}
\dim \Hom_{\LL_p}(M,I_0^{\LL_p}(N)) &= \sum_{i=1}^n \dim(M_i)\dim \Hom_{\LL_p}(S_i,N),\\
\dim \Hom_{\LL_p}(M,I_1^{\LL_p}(N)) &= \sum_{i=1}^n \dim(M_i)\dim \Ext_{\LL_p}^1(S_i,N).
\end{align*}
This implies
\begin{align*}
E_\LL(\cM,\cN) &= 
\dim \Hom_\LL(M,N) + 
\sum_{i=1}^n  \dim(M_i)(-\dim \Hom_\LL(S_i,N) + \dim \Ext_\LL^1(S_i,N))\\ 
&\;\;\;\; + 
\sum_{i=1}^n \dim(M_i)\dim(W_i)\\
&= 
\dim \Hom_{\LL_p}(M,N) + 
\sum_{i=1}^n  \dim(M_i)(-\dim \Hom_{\LL_p}(S_i,N) + \dim \Ext_{\LL_p}^1(S_i,N))\\ 
&\;\;\;\; + 
\sum_{i=1}^n \dim(M_i)\dim(W_i)\\
&= \dim \Hom_{\LL_p}(M,N) - \dim \Hom_{\LL_p}(M,I_0^{\LL_p}(N))
+ \dim \Hom_{\LL_p}(M,I_1^{\LL_p}(N))\\
&\;\;\;\; +\sum_{i=1}^n \dim(M_i)\dim(W_i).
\end{align*}
The first equality follows from Lemmas~\ref{trunclemma}, 
\ref{HomologicalE1} and \ref{lemma3.3}.
The second equality says that
$E_\LL(\cM,\cN) = E_{\LL_p}(\cM,\cN)$.
Applying 
Theorem~\ref{AR} yields 
$$
E_{\LL_p}(\cM,\cN) = \dim \Hom_{\LL_p}(\tau_{\LL_p}^-(N),M) 
+  \sum_{i=1}^n\dim(M_i)\dim(W_i).
$$
This finishes the proof.
\end{proof}

\begin{Cor}
For decorated representations $\cM$ and $\cN$ of a basic
algebra $\LL$ we have
$$
E_\LL(\cM,\cN) \ge 0.
$$
\end{Cor}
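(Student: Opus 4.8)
The plan is to reduce everything to the finite-dimensional truncation $\LL_p$ and then use the homological formula from Proposition~\ref{HomologicalE2}. Given decorated representations $\cM=(M,V)$ and $\cN=(N,W)$ of $\LL$, choose any $p>\dim(M),\dim(N)$. By Proposition~\ref{HomologicalE2} we have
$$
E_\LL(\cM,\cN) = \dim \Hom_{\LL_p}(\tau_{\LL_p}^-(N),M) + \sum_{i=1}^n \dim(M_i)\dim(W_i).
$$
Both summands on the right-hand side are manifestly non-negative: a dimension of a $\Hom$-space is a non-negative integer, and each term $\dim(M_i)\dim(W_i)$ is a product of non-negative integers. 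Hence $E_\LL(\cM,\cN)\ge 0$.

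So the proof is essentially immediate from the preceding proposition, and there is no real obstacle — the only thing to note is that such a $p$ always exists (take $p = \dim(M)+\dim(N)+1$, say), and that $\tau_{\LL_p}^-(N)$ is a well-defined $\LL_p$-module since $\LL_p$ is finite-dimensional, so its Auslander--Reiten translate exists. Thus the corollary follows directly.

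\begin{proof}
Let $\cM = (M,V)$ and $\cN = (N,W)$ be decorated representations of $\LL$, and pick any integer $p > \dim(M), \dim(N)$ (for instance $p = \dim(M) + \dim(N) + 1$). Since $\LL_p$ is finite-dimensional, the Auslander--Reiten translate $\tau_{\LL_p}^-(N)$ is defined. By Proposition~\ref{HomologicalE2} we have
$$
E_\LL(\cM,\cN) = \dim \Hom_{\LL_p}(\tau_{\LL_p}^-(N),M) + \sum_{i=1}^n \dim(M_i)\dim(W_i).
$$
The dimension $\dim \Hom_{\LL_p}(\tau_{\LL_p}^-(N),M)$ is a non-negative integer, and each term $\dim(M_i)\dim(W_i)$ is a product of non-negative integers, hence non-negative. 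Therefore $E_\LL(\cM,\cN) \ge 0$.
\end{proof}
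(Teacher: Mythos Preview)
Your proof is correct and is exactly the argument the paper intends: the corollary is stated immediately after Proposition~\ref{HomologicalE2} without proof precisely because both summands in the formula $E_\LL(\cM,\cN) = \dim \Hom_{\LL_p}(\tau_{\LL_p}^-(N),M) + \sum_{i=1}^n\dim(M_i)\dim(W_i)$ are manifestly non-negative.
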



\section{Caldero-Chapoton algebras}\label{sec4}


\subsection{Caldero-Chapoton functions}\label{defCC}
To any basic algebra $\LL = \CQ{Q}/I$
we associate a
set of Laurent polynomials in $n$ independent variables 
$x_1,\ldots,x_n$ as follows.
The \emph{Caldero-Chapoton function} 
associated to a decorated 
representation $\cM = (M,V)$ of $\LL$
is defined as
$$
C_\LL(\cM) :=
\bx^{g_\LL(\cM)} \sum_{\be \in \N^n} \chi(\Gr_\be(M)) 
\bx^{B_Q \be},
$$
where $B_Q$ and $g_\LL(\cM)$ are defined as in Sections~\ref{sec2.1}
and \ref{sec3.1}, respectively.
Note that $C_\LL(\cM) \in \Z[x_1^\pm,\ldots,x_n^\pm]$.
Let 
$$
\cC_\LL := \{ C_\LL(\cM) \mid \cM \in \decrep(\LL) \}
$$
be the set of Caldero-Chapoton functions associated to 
$\LL$.
For $\cM = (M,0)$ we sometimes write $C_\LL(M)$ instead of
$C_\LL(\cM)$.

The definition of $C_\LL(\cM)$ is motivated by 
the (different versions of) Caldero-Chapoton functions appearing in the theory of cluster algebras, see for example
\cite[Section~1]{Pa}.
Such functions first appeared in work of Caldero and Chapoton
\cite[Section~3]{CC}, where they show that the cluster variables
of a cluster algebra of a Dynkin quiver are
Caldero-Chapoton functions.

\begin{Lem}\label{additive}
For decorated representations $\cM = (M,V)$ and
$\cN = (N,W)$ the following hold:
\begin{itemize}

\item[(i)]
$g_\LL(\cM \oplus \cN) = g_\LL(\cM) + g_\LL(\cN)$.

\item[(ii)]
$C_\LL(\cM) = C_\LL(M,0)C_\LL(0,V)$.

\item[(iii)]
$C_\LL(\cM \oplus \cN) = C_\LL(\cM) C_\LL(\cN)$.

\end{itemize}
\end{Lem}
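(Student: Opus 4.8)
The plan is to prove the three parts in order, since (ii) and (iii) will follow from (i) together with the multiplicativity of Euler characteristics and Grassmannians under direct sums. For part (i), I would unwind the definition of $g_\LL$ componentwise: for each $i$ we have $g_i(\cM) = -\dim\Hom_\LL(S_i,M) + \dim\Ext_\LL^1(S_i,M) + \dim(V_i)$, and each of the three terms is additive in $\cM$. Indeed $\Hom_\LL(S_i,-)$ is additive on direct sums, $\Ext_\LL^1(S_i,-)$ is additive on direct sums (both standard facts for module categories, and here $S_i$ is finite-dimensional so everything is finite), and $\dim((V\oplus W)_i) = \dim(V_i) + \dim(W_i)$. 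Summing over $i$ gives $g_\LL(\cM\oplus\cN) = g_\LL(\cM) + g_\LL(\cN)$.

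For part (ii), write $\cM = (M,V)$ and note $\cM = (M,0)\oplus(0,V)$. By definition $C_\LL(0,V) = \bx^{g_\LL(0,V)}\sum_{\be}\chi(\Gr_\be(0))\bx^{B_Q\be}$; since the zero representation has only the empty subrepresentation, $\Gr_\be(0)$ is a point for $\be = 0$ and empty otherwise, so the sum collapses to $1$ and $C_\LL(0,V) = \bx^{g_\LL(0,V)}$, where $g_\LL(0,V) = (\dim V_1,\dots,\dim V_n)$. Meanwhile $C_\LL(M,0) = \bx^{g_\LL(M,0)}\sum_\be \chi(\Gr_\be(M))\bx^{B_Q\be}$, and by part (i) $g_\LL(\cM) = g_\LL(M,0) + g_\LL(0,V)$, while the underlying representation of $\cM$ is just $M$. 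Hence $C_\LL(\cM) = \bx^{g_\LL(M,0)+g_\LL(0,V)}\sum_\be\chi(\Gr_\be(M))\bx^{B_Q\be} = C_\LL(M,0)\,C_\LL(0,V)$.

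For part (iii), by part (ii) it suffices to treat the two factors separately: $C_\LL(0,V\oplus W) = \bx^{g_\LL(0,V\oplus W)} = \bx^{g_\LL(0,V)}\bx^{g_\LL(0,W)} = C_\LL(0,V)C_\LL(0,W)$ using part (i), and for the representation part I must show $C_\LL(M\oplus N,0) = C_\LL(M,0)C_\LL(N,0)$. The exponent prefactors multiply by part (i). For the Grassmannian sum, the key input is that every subrepresentation of $M\oplus N$ is \emph{not} in general a direct sum of a subrepresentation of $M$ and one of $N$, so I cannot simply split the variety; instead I invoke the standard fact that $\chi$ is multiplicative, $\chi(X\times Y) = \chi(X)\chi(Y)$, combined with the decomposition
$$
\Gr_\be(M\oplus N) = \coprod_{\be' + \be'' = \be} \text{(a variety with Euler characteristic } \chi(\Gr_{\be'}(M))\chi(\Gr_{\be''}(N))\text{)}.
$$
The cleanest way to obtain the displayed Euler-characteristic identity is via a torus action: a generic $\C^*$-action scaling $N$ relative to $M$ has fixed-point locus on $\Gr_\be(M\oplus N)$ equal to $\coprod_{\be'+\be''=\be}\Gr_{\be'}(M)\times\Gr_{\be''}(N)$, and $\chi$ of a variety with $\C^*$-action equals $\chi$ of its fixed locus. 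Therefore
$$
\sum_\be \chi(\Gr_\be(M\oplus N))\bx^{B_Q\be}
= \sum_\be\Bigl(\sum_{\be'+\be''=\be}\chi(\Gr_{\be'}(M))\chi(\Gr_{\be''}(N))\Bigr)\bx^{B_Q\be}
= \Bigl(\sum_{\be'}\chi(\Gr_{\be'}(M))\bx^{B_Q\be'}\Bigr)\Bigl(\sum_{\be''}\chi(\Gr_{\be''}(N))\bx^{B_Q\be''}\Bigr),
$$
using linearity of $\be\mapsto B_Q\be$. Multiplying by the exponent prefactor gives (iii). The main obstacle is precisely this last point: the fact that $\Gr_\be$ of a direct sum does not literally decompose as a disjoint union, so one genuinely needs the torus-action (or a cell-decomposition/stratification) argument to push the Euler characteristic through the direct sum. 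Everything else is bookkeeping.
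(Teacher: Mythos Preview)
Your proof is correct and follows essentially the same route as the paper: additivity of $\Hom$ and $\Ext$ for (i), the decomposition $\cM=(M,0)\oplus(0,V)$ for (ii), and the Euler-characteristic identity $\chi(\Gr_\be(M\oplus N))=\sum_{\be'+\be''=\be}\chi(\Gr_{\be'}(M))\chi(\Gr_{\be''}(N))$ for (iii). The only difference is that the paper cites this last identity as a well-known formula (referring to \cite[Proof of Proposition~3.2]{DWZ2}) rather than sketching the torus-action argument you give.
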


\begin{proof}
Part (i) follows directly from the definitions and from
the additivity of the functors $\Hom_\LL(-,?)$ and
$\Ext_\LL^1(-,?)$.
To prove (ii),
let $\cM = (M,V)$ be a decorated representation of $\LL$.
For the decorated representation $(0,V)$ we have
$$
C_\LL(0,V) = \prod_{i=1}^n x_i^{v_i}
$$
where $\dimv(V) = (v_1,\ldots,v_n)$.
For the decorated representation $(M,0)$
we have
$$
C_\LL(M,0) := \bx^{g_\LL(M,0)} \sum_{\be \in \N^n} \chi(\Gr_\be(M)) 
\bx^{B_Q \be}
$$
where $g_i(M,0) = -\dim \Hom_\LL(S_i,M) + 
\dim \Ext_\LL^1(S_i,M)$ for $1 \le i \le n$.
Now one easily checks that $C_\LL(\cM) = C_\LL(M,0)C_\LL(0,V)$.
Thus (ii) holds.
Now (iii) follows from (i), (ii) and the well known formula 
$$
\chi(\Gr_\be(M \oplus N)) =
\sum_{(\be',\be'')} 
\chi(\Gr_{\be'}(M)) \chi(\Gr_{\be''}(N))
$$
where the sum runs over all pairs $(\be',\be'') \in \N^n \times
\N^n$ such that $\be' + \be'' = \be$, see for example
\cite[Proof of Proposition~3.2]{DWZ2}.
\end{proof}

\subsection{Definition of a Caldero-Chapoton algebra}
In the previous section, we associated to a basic algebra 
$\LL$ the set
$$
\cC_\LL = \{ C_\LL(\cM) \mid \cM \in \decrep(\LL) \}
$$
of Caldero-Chapoton functions.
Clearly, $\cC_\LL$ is a subset of the integer Laurent
polynomial ring 
$\Z[x_1^\pm,\ldots,x_n^\pm]$ generated by the variables
$x_1,\ldots,x_n$.
By definition the \emph{Caldero-Chapoton algebra} 
$\cA_\LL$ 
associated to $\LL$ is the 
$\C$-subalgebra of 
$\C[x_1^\pm,\ldots,x_n^\pm]$
generated by $\cC_\LL$.
The following is a direct consequence of 
Lemma~\ref{additive}(iii).

\begin{Lem}\label{alggen}
The set $\cC_\LL$ generates $\cA_\LL$ as a $\C$-vector space.
\end{Lem}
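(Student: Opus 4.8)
The plan is to deduce Lemma~\ref{alggen} directly from Lemma~\ref{additive}(iii) together with the obvious observation that the Caldero-Chapoton algebra $\cA_\LL$ is \emph{defined} as the $\C$-subalgebra of $\C[x_1^\pm,\ldots,x_n^\pm]$ generated by the set $\cC_\LL$. Since $\cC_\LL$ generates $\cA_\LL$ as an algebra, $\cA_\LL$ is spanned as a $\C$-vector space by all finite products $C_\LL(\cM_1)\cdots C_\LL(\cM_r)$ with each $\cM_k\in\decrep(\LL)$ (together with the empty product, i.e. the constant $1$). So it suffices to show that every such product is already a $\C$-multiple of a single element of $\cC_\LL$, or more precisely lies in the $\C$-span of $\cC_\LL$.

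First I would treat the empty product: the constant function $1$ equals $C_\LL(\cM)$ for $\cM = 0$ the zero decorated representation, since then $M=0$, $V=0$, so $g_\LL(\cM)=0$, $\Gr_\be(M)$ is empty unless $\be=0$ in which case $\chi(\Gr_0(0))=1$; hence $C_\LL(0)=\bx^0\cdot 1 = 1\in\cC_\LL$. Next, for a product of $r\ge 1$ Caldero-Chapoton functions $C_\LL(\cM_1)\cdots C_\LL(\cM_r)$, an immediate induction on $r$ using Lemma~\ref{additive}(iii) gives
$$
C_\LL(\cM_1)\cdots C_\LL(\cM_r) = C_\LL(\cM_1\oplus\cdots\oplus\cM_r).
$$
Since $\decrep(\LL)$ is closed under finite direct sums, $\cM_1\oplus\cdots\oplus\cM_r\in\decrep(\LL)$, so the right-hand side lies in $\cC_\LL$. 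Therefore every monomial in the algebra generators of $\cA_\LL$ lies in $\cC_\LL$, and so $\cA_\LL$, being $\C$-spanned by such monomials, is $\C$-spanned by $\cC_\LL$. Combined with the trivial inclusion $\cC_\LL\subseteq\cA_\LL$, this shows $\cC_\LL$ generates $\cA_\LL$ as a $\C$-vector space.

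There is essentially no obstacle here; the content is entirely in the multiplicativity of Lemma~\ref{additive}(iii), which reduces products to direct sums, plus the closure of the class of decorated representations under direct sums. The only point requiring a (trivial) verification is the base case $r=0$ of the induction, i.e. that the unit of $\cA_\LL$ is a Caldero-Chapoton function, which I would handle as above by taking the zero decorated representation. One could also phrase the argument without mentioning the empty product by noting that $\cA_\LL$ is unital with $1\in\cC_\LL$, so the $\C$-subalgebra generated by $\cC_\LL$ coincides with the $\C$-span of all \emph{nonempty} products of elements of $\cC_\LL$, each of which is again in $\cC_\LL$.
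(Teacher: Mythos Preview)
Your proposal is correct and follows exactly the approach indicated in the paper, which simply states that the lemma is a direct consequence of Lemma~\ref{additive}(iii). You have merely spelled out the details (closure under direct sums, the base case $1=C_\LL(0)$) that the paper leaves implicit.
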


In this generality, Caldero-Chapoton algebras might not
be so useful. (One could generalize even more 
by replacing the matrix $B_Q$ in the definition of the functions $C_\LL(\cM)$
by any other matrix in $M_n(\Z)$.)
But the case where $\LL$ is the Jacobian algebra
$\cP(Q,W)$ of a quiver $Q$ with non-degenerate potential
$W$ (see \cite{DWZ1} for missing definitions) should certainly
be of interest.
In this case, based on work of Palu \cite{Pa}, Plamondon \cite{P1} considered a version of Caldero-Chapoton
functions using the Amiot cluster category \cite{A}.
In contrast, we follow Derksen, Weyman and Zelevinsky's \cite{DWZ1,DWZ2} approach and define and study Caldero-Chapoton functions purely
in terms
of the representation theory of the Jacobian algebra without passing
to the cluster category.

\subsection{Linear independence of Caldero-Chapoton functions}
Let $\LL = \CQ{Q}/I$ be a basic algebra.
Except in some trivial cases, the set $\cC_\LL$ of Caldero-Chapoton functions associated
to decorated representations of $\LL$ is 
linearly dependent. 
Often the Caldero-Chapoton functions 
satisfy beautiful relations, which should
be studied more intensively. 
On the other hand, by Lemma~\ref{alggen}, there are $\C$-bases
of $\cA_\LL$
consisting only of Caldero-Chapoton functions.
Our aim is to provide a candidate $\cB_\LL$ for such a basis.
Before constructing $\cB_\LL$ in Section~\ref{sec5}, 
we prove the following
criterion for linear independence of certain sets of
Caldero-Chapoton functions.

Let 
\begin{align*}
\Q_{\ge 0}^n &:= \{ (a_1,\ldots,a_n) \in \Q^n \mid 
a_i \ge 0 \text{ for all } i \},\\
\Q_{> 0}^n &:= \{ (a_1,\ldots,a_n) \in \Q^n \mid 
a_i > 0 \text{ for all } i \}.
\end{align*}

\begin{Prop}\label{li2}
Let $\LL = \CQ{Q}/I$ be a basic algebra.
Let $\cM_j$, $j \in J$ be 
decorated representations of $\LL$.
Assume the following:
\begin{itemize}

\item[(i)]
$\Ker(B_Q) \cap \Q_{\ge 0}^n = 0$.

\item[(ii)]
The
$g$-vectors $g_\LL(\cM_j)$, $j \in J$ are
pairwise different.

\end{itemize}
Then the Caldero-Chapoton functions $C_\LL(\cM_j)$,
$j \in J$ are
pairwise different and linearly independent in $\cA_\LL$.
\end{Prop}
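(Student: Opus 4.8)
The plan is to extract, from each Caldero-Chapoton function $C_\LL(\cM_j)$, a distinguished ``leading'' Laurent monomial and to show that these leading monomials are pairwise distinct; linear independence then follows by a standard triangularity argument. Recall that by definition
$$
C_\LL(\cM_j) = \bx^{g_\LL(\cM_j)} \sum_{\be \in \N^n} \chi(\Gr_\be(M_j)) \bx^{B_Q\be},
$$
and the term $\be = 0$ contributes $\chi(\Gr_0(M_j)) = \chi(\mathrm{pt}) = 1$, so $\bx^{g_\LL(\cM_j)}$ always occurs with coefficient $1$. I would try to show that $\bx^{g_\LL(\cM_j)}$ is actually the leading monomial of $C_\LL(\cM_j)$ with respect to a suitable total order on $\Z^n$, more precisely that every other exponent $g_\LL(\cM_j) + B_Q\be$ appearing with nonzero coefficient is \emph{strictly smaller}.

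The key point is to choose the order correctly using hypothesis (i). Since $\Ker(B_Q) \cap \Q_{\ge 0}^n = 0$, the image of the simplicial cone $\Q_{\ge 0}^n$ under $B_Q$ is a cone containing no line, so there exists a linear functional $\ell \df \Z^n \to \Z$ (or $\Q^n \to \Q$) with $\ell(B_Q\be) < 0$ for all $\be \in \Q_{\ge 0}^n \setminus \{0\}$; this is a standard separation/Farkas-type fact, applied to the pointed cone $B_Q(\Q_{\ge 0}^n)$ and using that $B_Q$ is injective on the part of $\Q^n_{\ge 0}$ it doesn't kill. First I would verify this separation statement carefully, since it is the place where hypothesis (i) is used in an essential way: the cone $B_Q(\Q_{\ge 0}^n)$ is finitely generated, it is \emph{salient} precisely because $\Ker(B_Q)\cap\Q_{\ge 0}^n=0$ together with skew-symmetry of $B_Q$ (which forces $0 = \be^T B_Q \be$, so any $B_Q\be$ and $-B_Q\be=B_Q(-\be)$ cannot both lie in the cone unless $B_Q\be = 0$), and a salient finitely generated cone lies in an open half-space. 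Then for each $j$, refining to a total order $\prec$ on $\Z^n$ that refines ``$a \prec a'$ if $\ell(a) < \ell(a')$'', we get: for every $\be \ne 0$ with $\chi(\Gr_\be(M_j)) \ne 0$, $\ell(g_\LL(\cM_j) + B_Q\be) = \ell(g_\LL(\cM_j)) + \ell(B_Q\be) < \ell(g_\LL(\cM_j))$, so $g_\LL(\cM_j) + B_Q\be \prec g_\LL(\cM_j)$. Hence the $\prec$-maximal monomial of $C_\LL(\cM_j)$ is exactly $\bx^{g_\LL(\cM_j)}$, with coefficient $1$.

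Now suppose $\sum_{j \in J_0} c_j C_\LL(\cM_j) = 0$ for some finite subset $J_0 \subseteq J$ and scalars $c_j \in \C$ not all zero. By hypothesis (ii) the $g$-vectors $g_\LL(\cM_j)$, $j \in J_0$, are pairwise distinct, so among $\{g_\LL(\cM_j) : j \in J_0, c_j \ne 0\}$ there is a unique $\prec$-maximal one, say $g_\LL(\cM_{j_0})$. The monomial $\bx^{g_\LL(\cM_{j_0})}$ occurs in $C_\LL(\cM_{j_0})$ with coefficient $1$ and, by the leading-monomial property just established, occurs in no other $C_\LL(\cM_j)$ ($j \in J_0$, $c_j\ne 0$) — indeed every monomial of $C_\LL(\cM_j)$ is $\preceq g_\LL(\cM_j) \prec g_\LL(\cM_{j_0})$ when $j\ne j_0$, and $= g_\LL(\cM_{j_0})$ only when $j = j_0$. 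Hence the coefficient of $\bx^{g_\LL(\cM_{j_0})}$ in $\sum_{j} c_j C_\LL(\cM_j)$ equals $c_{j_0} \ne 0$, contradicting that the sum is $0$. This proves linear independence; pairwise distinctness of the functions is immediate since distinct $g$-vectors give distinct leading monomials (so in particular the functions are distinct).

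I expect the main obstacle to be the cone-separation step: one must be careful that $B_Q$ being skew-symmetric is what makes the image cone $B_Q(\Q_{\ge 0}^n)$ salient (a priori $\Ker(B_Q)\cap\Q_{\ge 0}^n=0$ only says $B_Q$ is injective on a transversal to the kernel, not that the image cone contains no line), and that a total order on $\Z^n$ refining the chosen functional exists and is compatible with addition — the latter is routine (take a lexicographic refinement), but the salience argument deserves care. Everything else — the fact that the $\be=0$ term has coefficient $1$, additivity of $\ell$, and the triangular elimination of the leading term — is formal.
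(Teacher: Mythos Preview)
Your approach is correct and follows the same leading-monomial-plus-triangularity strategy as the paper. The paper packages it slightly differently: instead of producing a linear functional $\ell$, it works directly with the partial order $\ba \le \bb \iff \ba = \bb + B_Q\be$ for some $\be \in \Q_{\ge 0}^n$, verifies antisymmetry from hypothesis~(i), observes that $\bx^{g_\LL(\cM)}$ is the unique $\le$-maximal monomial of $C_\LL(\cM)$, and then picks a maximal $g$-vector among the finitely many appearing in a putative linear relation. Your total-order version and the paper's partial-order version are equivalent in spirit; the paper's is a bit quicker since it avoids the separation lemma.

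One correction: your justification of salience is muddled --- skew-symmetry and the identity $\be^T B_Q \be = 0$ are red herrings here. The clean argument (which is exactly the paper's antisymmetry step) uses only hypothesis~(i): if $v = B_Q\be_1$ and $-v = B_Q\be_2$ with $\be_1,\be_2 \in \Q_{\ge 0}^n$, then $\be_1+\be_2 \in \Ker(B_Q)\cap\Q_{\ge 0}^n = 0$, so $\be_1=\be_2=0$ and $v=0$. No skew-symmetry is needed.
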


\begin{proof}
We treat $B_Q$ as a linear map $\Q^n \to \Q^n$.
For $\ba,\bb \in \Z^n$ define
$\ba \le \bb$ if there exists some $\be \in \Q_{\ge 0}^n$
such that 
$$
\ba = \bb + B_Q\be.
$$
We claim that this defines a partial order on $\Z^n$.
Clearly, $\le$ is reflexive and transitive.
Assume that $\ba \le \bb$ and $\bb \le \ba$.
Thus $\ba = \bb + B_Q\bff_1$ and
$\bb = \ba + B_Q\bff_2$ for some $\bff_1,\bff_2 \in \Q_{\ge 0}^n$.
It follows that $\ba = \ba + B_Q(\bff_1+\bff_2)$.
Thus $\bff_1+\bff_2 \in \Ker(B_Q)$.
Our assumption (i) yields that $\bff_1 = \bff_2 = 0$.
Thus $\ba = \bb$.
This shows that $\le$ is antisymmetric.

The partial order $\le$ on $\Z^n$ induces obviously
a partial order on the set of Laurent monomials
in the variables $x_1,\ldots,x_n$.
Namely, set $\bx^\ba \le \bx^\bb$ if
$\ba \le \bb$.
Let $\deg(\bx^\ba) := \ba$ be the \emph{degree} of
$\bx^\ba$.

Among the Laurent monomials $\bx^{g_\LL(\cM)+B_Q \be}$ occuring in the expression
$$
C_\LL(\cM) = \bx^{g_\LL(\cM)}\sum_{\be \in \N^n} \chi(\Gr_\be(M)) \bx^{B_Q \be} =
\sum_{\be \in \N^n} \chi(\Gr_\be(M)) \bx^{g_\LL(\cM)+B_Q \be}
$$
the monomial $\bx^{g_\LL(\cM)}$ is the unique monomial of
maximal degree.

For $\be = 0$ the Grassmannian $\Gr_\be(M)$ is just a point,
and $B_Q \be = 0$. 
Thus, if $\be = 0$, we have
$\chi(\Gr_\be(M))\bx^{B_Q \be} = 1$.
This shows that the Laurent monomial $\bx^{g_\LL(\cM)}$
really occurs as a non-trivial summand of $C_\LL(\cM)$.
In particular, we have $C_\LL(\cM) \not= C_\LL(\cN)$ if 
$g_\LL(\cM) \not= g_\LL(\cN)$.

Now let $\cM_1,\ldots,\cM_t$ be decorated representations
of $\LL$ with pairwise different $g$-vectors.
Assume that
$$
\lambda_1 C_\LL(\cM_1) + \cdots + \lambda_t C_\LL(\cM_t) = 0
$$
for some $\lambda_j \in \C$.
Without loss of generality we assume that $\lambda_j \not= 0$ for
all $j$.
There is a (not necessarily unique) index $s$ such that
$\bx^{g_\LL(\cM_s)}$ is maximal in the set
$\{ \bx^{g_\LL(\cM_j)} \mid 1 \le j \le t\}$.
It follow that the Laurent monomial $\bx^{g_\LL(\cM_s)}$
does not occur as a summand of any of the
Laurent polynomials $C_\LL(\cM_j)$ with
$j \not= s$.
(Here we use that the $g$-vectors of the decorated representations $\cM_j$ are pairwise different.)
This implies $\lambda_s = 0$, a contradiction.
Thus $C_\LL(\cM_1),\ldots,C_\LL(\cM_t)$ are linearly independent. 
\end{proof}

For $\ba = (a_1,\ldots,a_n)$ and $\bb = (b_1,\ldots,b_n)$
in $\Z^n$ set
$\ba \cdot \bb := a_1b_1 + \cdots + a_nb_n$.

Note that condition 
(d) in the following lemma coincides
with condition (i) in Proposition~\ref{li2}.

\begin{Lem}\label{liconditions}

For the conditions
\begin{itemize}

\item[(a)]
$\rk(B_Q) = n$.

\item[(b)]
Each row of $B_Q$ has at least
one non-zero entry, and
there are $n-\rk(B_Q)$ rows of
$B_Q$, which are non-negative linear combinations
of the remaining $\rk(B_Q)$ rows of $B_Q$.

\item[(c)]
$\Ima(B_Q) \cap \Q_{>0}^n \not= \varnothing$.

\item[(d)]
$\Ker(B_Q) \cap \Q_{\ge 0}^n  = 0$.

\end{itemize}
the implications
$$
{\rm (a)} \implies {\rm (b)} \implies {\rm (c)} \implies {\rm (d)}
$$
hold.
\end{Lem}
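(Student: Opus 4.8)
The plan is to prove the three implications in Lemma~\ref{liconditions} separately, moving from the strongest hypothesis to the weakest conclusion.

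For $\mathrm{(a)}\implies\mathrm{(b)}$: if $\rk(B_Q)=n$, then $B_Q$ is invertible, so no row can be zero (a zero row would force $\rk(B_Q)<n$); in particular each row has a nonzero entry. Moreover $n-\rk(B_Q)=0$, so the second part of (b) is a statement about an empty set of rows and holds vacuously. This step is essentially immediate.

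For $\mathrm{(b)}\implies\mathrm{(c)}$: write $r=\rk(B_Q)$ and let $R_1,\ldots,R_n$ denote the rows of $B_Q$. By (b), after reindexing we may assume $R_{r+1},\ldots,R_n$ are non-negative linear combinations of $R_1,\ldots,R_r$, say $R_{r+k}=\sum_{j=1}^r c_{kj}R_j$ with all $c_{kj}\ge 0$. Since $B_Q$ is skew-symmetric, it has even rank, and more to the point each column $C_i$ of $B_Q$ is $-R_i^{\mathrm T}$; thus from the row relations we get corresponding column relations $C_{r+k}=\sum_{j=1}^r c_{kj}C_j$. I want to produce $\bv\in\Q^n$ with $B_Q\bv\in\Q_{>0}^n$. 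The idea is to choose $\bv$ supported on the first $r$ coordinates: then $B_Q\bv=\sum_{j=1}^r v_j C_j$, and its $i$th entry for $i\le r$ is the $i$th entry of $\sum_j v_j C_j$, while for $i=r+k$ it equals $\sum_{j}v_j (C_j)_{r+k}$. Here I would instead argue more cleanly: pick $\bv=B_Q^{\mathrm T}\bw=-B_Q\bw$ for a suitable $\bw$, so that the image vector $B_Q\bv=-B_Q B_Q\bw$ has $i$th coordinate $\langle R_i, -B_Q\bw\rangle$; using that each $R_i$ is nonzero and the $R_{r+k}$ are non-negative combinations of a spanning set of the row space, a generic $\bw$ in the row space makes all the pairings $\langle R_i,-B_Q\bw\rangle$ strictly positive. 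The hard part of the whole lemma is getting this positivity argument right: one needs that a nonzero vector which is a non-negative combination of linearly independent vectors still pairs strictly positively against a suitably chosen element, and that the finitely many strict inequalities can be satisfied simultaneously (an open dense condition, provided each is satisfiable, which follows because each $R_i\neq 0$ and the row space is self-paired nondegenerately in the appropriate sense).

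For $\mathrm{(c)}\implies\mathrm{(d)}$: suppose $\mathbf{0}\neq\bff\in\Ker(B_Q)\cap\Q_{\ge 0}^n$, and pick $\bu\in\Ima(B_Q)\cap\Q_{>0}^n$, say $\bu=B_Q\bv$. Then
$$
0 < \bff\cdot\bu = \bff\cdot B_Q\bv = -(B_Q\bff)\cdot\bv = 0,
$$
using skew-symmetry of $B_Q$ and $B_Q\bff=0$; here $\bff\cdot\bu>0$ because $\bff$ is a nonzero vector in $\Q_{\ge 0}^n$ and $\bu\in\Q_{>0}^n$. This contradiction gives $\Ker(B_Q)\cap\Q_{\ge 0}^n=0$, which is (d). This last implication is short; the only subtlety is invoking skew-symmetry of $B_Q$ at the right moment.
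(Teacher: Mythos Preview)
Your arguments for $\mathrm{(a)}\Rightarrow\mathrm{(b)}$ and $\mathrm{(c)}\Rightarrow\mathrm{(d)}$ are correct and essentially the same as the paper's.

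The argument for $\mathrm{(b)}\Rightarrow\mathrm{(c)}$, however, has a genuine gap. You start one approach (supporting $\bv$ on the first $r$ coordinates), abandon it, and then try to produce $\bv=-B_Q\bw$ by a genericity argument: you claim the conditions $\langle R_i,-B_Q\bw\rangle>0$ are ``open dense'' and hence simultaneously satisfiable because each $R_i\neq 0$. This is wrong. Each condition $R_i\cdot\bv>0$ cuts out an open half-space, not an open dense set, and a finite intersection of nonempty open half-spaces can certainly be empty (e.g.\ $x>0$ and $-x>0$). The phrase ``the row space is self-paired nondegenerately in the appropriate sense'' does not rescue this; nondegeneracy says nothing about positivity.

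The paper's approach is much more direct and makes the role of hypothesis (b) transparent. After reindexing so that $R_1,\ldots,R_r$ are the distinguished rows (necessarily linearly independent, since they span the row space and there are exactly $r=\rk(B_Q)$ of them), one simply solves the linear system $R_j\cdot\be=1$ for $j=1,\ldots,r$; a solution $\be\in\Q^n$ exists because the $(r\times n)$-matrix with these rows has full rank $r$. Then the $i$th entry of $B_Q\be$ is $R_i\cdot\be$, which equals $1$ for $i\le r$ and equals $\sum_j\lambda_j^{(i)}>0$ for $i>r$, since $R_i=\sum_j\lambda_j^{(i)}R_j$ with all $\lambda_j^{(i)}\ge 0$ and $R_i\neq 0$ forces at least one $\lambda_j^{(i)}>0$. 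Thus $B_Q\be\in\Q_{>0}^n$. Your first abandoned idea was much closer to this than the genericity detour; the missing observation is simply that linear independence of $R_1,\ldots,R_r$ lets you prescribe $R_j\cdot\be$ freely.
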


\begin{proof}
The implication ${\rm (a)} \implies {\rm (b)}$ is trivial.
Next, assume (b) holds.
Let $m := \rk(B_Q)$.
We denote the $j$th row of $B_Q$ by $r_j$.
By assumption there are pairwise different
indices $i_1,\ldots,i_m \in \{1,\ldots,n\}$ such that
for each $1 \le k \le n$ with
$k \notin \{ i_1,\ldots,i_m \}$ we have
$$
r_k = \lambda_1^{(k)} r_{i_1} + \cdots +
\lambda_m^{(k)} r_{i_m}
$$
for some non-negative rational numbers 
$\lambda_j^{(k)}$.
Since $r_k$ is non-zero, at least one of the $\lambda_j^{(k)}$
is positive.
Clearly, there is some $\be \in \Q^n$ such that
$r_{i_j} \cdot \be = 1$ for all $1 \le j \le m$.
(The $(k \times n)$-matrix with rows
$r_{i_1},\ldots,r_{i_m}$ has rank $m$.
Thus, we can see it as a surjective homomorphism
$\Q^n \to \Q^m$.)
Now observe that the $k$th entry of $B_Q\be$
is 
$\lambda_1^{(k)} + \cdots + \lambda_m^{(k)}$
for all $1 \le k \le n$ with $k \notin \{ i_1,\ldots,i_m \}$
and that this entry is positive.
It follows that $\Ima(B_Q) \cap \Q_{>0}^n \not= \varnothing$.

Finally, to show ${\rm (c)} \implies {\rm (d)}$ let
$\bb \in \Ima(B_Q) \cap \Q_{>0}^n$.
Thus there is some $\ba \in \Q^n$ such that $B_Q\ba = \bb$.
Since $B_Q$ is skew-symmetric, we get
$-\ba B_Q = \bb$.
Now let $\be \in \Ker(B_Q) \cap \Q_{\ge 0}^n$.
We get $B_Q\be = 0$, and therefore $-\ba B_Q\be = 
\bb \cdot \be = 0$.
Since $\bb$ has only positive entries and $\be$ has
only non-negative entries, we get $\be = 0$.
This finishes the proof.
\end{proof}

For the example, where $\LL$ is the path algebra of an affine quiver of type $\A_2$,
the main argument used in the proof of Proposition~\ref{li2}
can already be found in \cite[Section~6.1]{C}.
If we
replace condition (i) by condition (a), Proposition~\ref{li2} was first proved by Fu and Keller \cite[Corollary~4.4]{FK}. 
Essentially the same argument was later also used 
by Plamondon \cite{P1}.
That the Fu-Keller argument can be applied under condition
(b) was observed by Gei{\ss} and Labardini.
To any triangulation $T$ of a punctured Riemann surface with non-empty boundary,
one can associate a 2-acyclic quiver $Q_T$.
It is shown in \cite{GLaS} that there is always a triangulation $T$ such that the matrix $B_{Q_T}$ satisfies condition (b).


\section{Strongly reduced components of representation varieties}\label{sec5}


\subsection{Strongly reduced components}
Let $\LL = \CQ{Q}/I$ be a basic algebra, and let
$(\bd,\bv) \in \N^n \times \N^n$.
By $\irr_\bd(\LL)$ and $\decirr_{\bd,\bv}(\LL)$
we denote the set of irreducible components
of $\rep_\bd(\LL)$ and $\decrep_{\bd,\bv}(\LL)$, respectively.
For $Z \in \decirr_{\bd,\bv}(\LL)$ we write 
$\dimv(Z) := (\bd,\bv)$.
Let 
$$
\irr(\LL) = \bigcup_{\bd \in \N^n} \irr_\bd(\LL)
\text{\;\;\; and \;\;\;}
\decirr(\LL) = \bigcup_{(\bd,\bv) \in \N^n \times \N^n} \decirr_{\bd,\bv}(\LL).
$$
Note that any irreducible component $Z \in \decirr(\LL)$
can be seen
as an irreducible component in $\irr(\LL_\dec)$, where
$\LL_\dec := \LL \times \C \times \cdots \times \C$
is defined as the product of $\LL$ with $n$ copies 
of $\C$.
In fact, we can identify $\decrep(\LL)$ and $\rep(\LL_\dec)$.
Thus statements on 
varieties of representations can be carried over to varieties of decorated representations.

By definition we have
$$
\decrep_{\bd,\bv}(\LL) =
\{ (M,\C^\bv) \mid M \in \rep_\bd(\LL) \}.
$$
We have an isomorphism
$$
\decrep_{\bd,\bv}(\LL) \to \rep_\bd(\LL)
$$
of affine varieties mapping
$(M,\C^\bv)$ to $M$.
Thus the irreducible components of 
$\decrep_{\bd,\bv}(\LL)$ can be interpreted as irreducible components of
$\rep_\bd(\LL)$.
For $Z \in \decirr_{\bd,\bv}(\LL)$ let $\pi Z$ be the corresponding component in $\irr_\bd(\LL)$.

For $Z,Z_1,Z_2 \in \decirr(\LL)$ define
\begin{align*}
c_\LL(Z) &:= \min \{ \dim(Z) - \dim \cO(\cM) \mid \cM \in Z \},\\
e_\LL(Z) &:= \min \{ \dim \Ext_\LL^1(M,M) \mid \cM = (M,V) \in Z \},\\
\gEnd_\LL(Z) &:= \min \{ \dim \End_\LL(M) \mid \cM = (M,V) \in Z \},\\
\gHom_\LL(Z_1,Z_2) &:= \min \{ \dim \Hom_\LL(M_1,M_2) \mid \cM_i = (M_i,V_i) \in Z_i,\, i=1,2 \},\\
\gExt_\LL^1(Z_1,Z_2) &:= \min \{ \dim \Ext_\LL^1(M_1,M_2) \mid \cM_i = (M_i,V_i) \in Z_i,\, i=1,2 \}.
\end{align*}
For $Z,Z_1,Z_2 \in \decirr(\LL)$ there is a dense open subset $U$ of
$Z$ (resp. $Z_1 \times Z_2$) such that $E_\LL(\cM) = E_\LL(\cN)$ for all $\cM,\cN \in U$ (resp. $E_\LL(\cM_1,\cM_2) = E_\LL(\cN_1,\cN_2)$
for all $(\cM_1,\cM_2),(\cN_1,\cN_2) \in U$).
This follows from the upper semicontinuity of
the functions $\dim \Hom_\LL(-,?)$ and $\dim \Ext_\LL^1(-,?)$ proved
in \cite[Lemma~4.3]{CBS}.
For $\cM \in U$  (resp. $(\cM_1,\cM_2) \in U$) define 
$E_\LL(Z) := E_\LL(\cM)$ (resp. $E_\LL(Z_1,Z_2) := E_\LL(\cM_1,\cM_2)$).

Note that for $Z \in \decirr_{\bd,\bv}(\LL)$ we have
$$
c_\LL(Z) = \dim(Z) - \dim(G_\bd) + \gEnd_\LL(Z).
$$
This follows from Equation~(\ref{orbitdim}).

It is easy to construct examples of components $Z \in \decirr(\LL)$ such that $\gEnd_\LL(Z) \not= \gHom_\LL(Z,Z)$,
$e_\LL(Z) \not= \gExt_\LL^1(Z,Z)$ and $E_\LL(Z) \not= E_\LL(Z,Z)$.
Namely, let $\LL = \C Q$ be the path algebra of the Kronecker
quiver, and let $Z \in \decirr_{\bd,\bv}(\LL)$ with $\bd = (1,1)$ and
$\bv = (0,0)$. 
(Since $\LL$ is a path algebra of an acyclic quiver,
$\decrep_{\bd,\bv}(\LL)$ is irreducible for all $\bd,\bv$.)
An easy calculation shows that 
$\gEnd_\LL(Z) = e_\LL(Z) = E_\LL(Z) = 1$ and
$\gHom_\LL(Z,Z) = \gExt_\LL^1(Z,Z) = E_\LL(Z,Z) = 0$.
A further discussion of this example can be found in Section~\ref{sec9.4.3}.

The next lemma follows again from the upper semicontinuity
of $\dim \Hom_\LL(-,?)$ and $\dim \Ext_\LL^1(-,?)$.

\begin{Lem}\label{open}
For $Z,Z_1,Z_2 \in \decirr(\LL)$ the following hold:
\begin{itemize}

\item[(i)]
The sets
\begin{align*} 
& \{ \cM \in Z \mid \dim(Z) - \dim \cO(\cM)  = c_\LL(Z) \},\\
& \{ \cM = (M,V) \in Z \mid \dim \Ext_\LL^1(M,M) = e_\LL(Z) \},\\
& \{ \cM = (M,V) \in Z \mid \dim \End_\LL(M)  = \gEnd_\LL(Z) \}
\end{align*}
are open in $Z$.

\item[(ii)]
The sets
\begin{align*}
& \{ ((M_1,V_1),(M_2,V_2)) \in Z_1 \times Z_2 \mid
\dim \Hom_\LL(M_1,M_2)  = \gHom_\LL(Z_1,Z_2) \},\\
& \{ ((M_1,V_1),(M_2,V_2)) \in Z_1 \times Z_2 \mid
\dim \Ext_\LL^1(M_1,M_2)  = \gExt_\LL^1(Z_1,Z_2) \}
\end{align*}
are open in $Z_1 \times Z_2$.

\item[(iii)]
The sets
\begin{align*}
& \{ \cM \in Z \mid E_\LL(\cM)  = E_\LL(Z) \},\\
& \{ (\cM_1,\cM_2) \in Z_1 \times Z_2 \mid
E_\LL(\cM_1,\cM_2)  = E_\LL(Z_1,Z_2) \}
\end{align*}
are dense constructible subsets of $Z$ and $Z_1 \times Z_2$, 
respectively.

\end{itemize}
\end{Lem}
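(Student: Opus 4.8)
The plan is to reduce everything to the single input fact quoted just before the statement, namely the upper semicontinuity of the functions $\dim \Hom_\LL(-,?)$ and $\dim \Ext_\LL^1(-,?)$ from \cite[Lemma~4.3]{CBS}, together with the standard fact that orbit dimension is a lower semicontinuous function on a variety of representations. First I would recall that on an irreducible variety, if $f$ is an upper semicontinuous integer-valued function, then $f$ attains its minimum value $m_0$ precisely on a dense open subset, while for each $m$ the locus $\{f \le m\}$ is open; and the product $Z_1 \times Z_2$ is again irreducible, with the functions $(\cM_1,\cM_2) \mapsto \dim \Hom_\LL(M_1,M_2)$ and $(\cM_1,\cM_2)\mapsto \dim\Ext^1_\LL(M_1,M_2)$ upper semicontinuous on it by applying \cite[Lemma~4.3]{CBS} to the family over $Z_1\times Z_2$.

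For part (i), the second and third sets are exactly the minimum-loci of $\cM=(M,V)\mapsto \dim\Ext^1_\LL(M,M)$ and $\cM=(M,V)\mapsto \dim\End_\LL(M)$ on $Z$; these are upper semicontinuous (compose the semicontinuous $\dim\Hom_\LL(-,?)$, resp. $\dim\Ext^1_\LL(-,?)$, with the diagonal $Z \to Z \times Z$, which is a morphism), hence their minimum-loci are open in $Z$. For the first set, write $\dim(Z) - \dim\cO(\cM) = \dim(Z) - \dim G_\bd + \dim\End_\LL(M)$ using Equation~(\ref{orbitdim}); since $\dim(Z)$ and $\dim G_\bd$ are constants on $Z$, this locus coincides with the minimum-locus of $\cM \mapsto \dim\End_\LL(M)$, so openness follows from the third case. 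Part (ii) is the same argument carried out on $Z_1\times Z_2$: the two displayed sets are the minimum-loci of the upper semicontinuous functions $\dim\Hom_\LL(M_1,M_2)$ and $\dim\Ext^1_\LL(M_1,M_2)$, hence open in the irreducible variety $Z_1 \times Z_2$.

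Part (iii) is the one requiring a little more care, because $E_\LL(\cM,\cN) = \dim\Hom_\LL(M,N) + \sum_i \dim(M_i)g_i(\cN)$ and the second summand involves the $g$-vector, which need not be constant on $Z_2$. The point is that, by Lemma~\ref{lemma3.3} (applied with any fixed truncation index $p$ exceeding all dimensions occurring on $Z_1\times Z_2$, which is finite since $\dimv$ is constant on each irreducible component), each $g_i(\cN)$ is a $\Z$-linear combination $-[\soc(I_0^{\LL_p}(N)):S_i] + [\soc(I_1^{\LL_p}(N)):S_i] + \dim(V_i)$; by Lemma~\ref{HomologicalE1} these socle multiplicities equal $\dim\Hom_{\LL_p}(S_i,N)$ and $\dim\Ext^1_{\LL_p}(S_i,N)$, which are (over the finite-dimensional algebra $\LL_p$) upper, resp. upper, semicontinuous in $\cN$ — but with opposite signs they are not jointly semicontinuous, so $g_i$ is merely a constructible function on $Z_2$. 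Nevertheless $E_\LL(-,?)$ is then a constructible function on $Z_1\times Z_2$ (a $\Z$-combination of constructible functions, multiplied by the constant $\dim(M_i)$), so its value set is finite and on each irreducible locally closed stratum it is constant; the locus where it equals the generic value $E_\LL(Z_1,Z_2)$ contains a dense open subset of each maximal-dimensional stratum, hence is a dense constructible subset of $Z_1\times Z_2$. The same reasoning, using the diagonal embedding $Z \to Z\times Z$ and the earlier observation that $\dimv$ is constant on $Z$, gives that $\{\cM \in Z \mid E_\LL(\cM) = E_\LL(Z)\}$ is dense constructible in $Z$. The main obstacle is exactly this: $E_\LL$ is not semicontinuous, only constructible, so one must argue density via stratification rather than via an open-locus statement; invoking Lemmas~\ref{lemma3.3} and \ref{HomologicalE1} to express the $g$-vector through $\LL_p$-homological data is what makes the constructibility transparent.
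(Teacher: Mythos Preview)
Your proposal is correct and follows the paper's own approach, which simply attributes the lemma to the upper semicontinuity of $\dim\Hom_\LL(-,?)$ and $\dim\Ext_\LL^1(-,?)$ from \cite[Lemma~4.3]{CBS} without further elaboration. One simplification: your detour in part~(iii) through truncations and Lemmas~\ref{lemma3.3} and~\ref{HomologicalE1} is unnecessary, since the definition of $g_i(\cN)$ in Section~\ref{sec3.1} already expresses it directly as $-\dim\Hom_\LL(S_i,N)+\dim\Ext_\LL^1(S_i,N)+\dim(W_i)$, so constructibility of $E_\LL$ on $Z_1\times Z_2$ (with $\dim(M_i)$ and $\dim(W_i)$ constant there) is immediate from the semicontinuity input.
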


\begin{Lem}\label{lemmaineq} 
For $Z,Z_1,Z_2 \in \decirr(\LL)$ we have
$$
c_\LL(Z) \le e_\LL(Z) \le E_\LL(Z)
\text{\;\;\; and \;\;\;}
\gExt_\LL^1(Z_1,Z_2) \le E_\LL(Z_1,Z_2).
$$
\end{Lem}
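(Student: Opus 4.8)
The plan is to prove the three inequalities separately, using the homological interpretations established earlier together with genericity arguments.

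\textbf{Step 1: $e_\LL(Z) \le E_\LL(Z)$.} For a generic $\cM = (M,V) \in Z$ we have $E_\LL(\cM) = E_\LL(Z)$, and by definition $E_\LL(\cM) = \dim\Hom_\LL(M,M) + \sum_i \dim(M_i) g_i(\cM)$. Unwinding $g_i(\cM)$ and using the exact sequence defining $g$-vectors (or directly Proposition~\ref{HomologicalE2}, which gives $E_\LL(\cM) = \dim\Hom_{\LL_p}(\tau_{\LL_p}^-(M),M) + \sum_i \dim(M_i)\dim(V_i)$ for $p > \dim(M)$), one sees $E_\LL(\cM) \ge \dim\Hom_{\LL_p}(\tau_{\LL_p}^-(M),M)$. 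On the other hand, the Auslander-Reiten formula gives $\dim\Ext^1_{\LL_p}(M,M) = \dim\overline{\Hom}_{\LL_p}(\tau_{\LL_p}^-(M),M)$ (the stable Hom, a quotient of $\Hom_{\LL_p}(\tau_{\LL_p}^-(M),M)$), so $\dim\Ext^1_\LL(M,M) = \dim\Ext^1_{\LL_p}(M,M) \le \dim\Hom_{\LL_p}(\tau_{\LL_p}^-(M),M) \le E_\LL(\cM)$. Since $e_\LL(Z)$ is the \emph{minimum} of $\dim\Ext^1_\LL(M,M)$ over $Z$, we get $e_\LL(Z) \le \dim\Ext^1_\LL(M,M) \le E_\LL(\cM) = E_\LL(Z)$.

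\textbf{Step 2: $c_\LL(Z) \le e_\LL(Z)$.} Pick a generic $\cM = (M,V)$ lying in the (open, dense) locus where $\dim\End_\LL(M) = \gEnd_\LL(Z)$; then $\dim\cO(\cM) = \dim G_\bd - \gEnd_\LL(Z)$, so $\dim(Z) - \dim\cO(\cM) = \dim(Z) - \dim G_\bd + \gEnd_\LL(Z) = c_\LL(Z)$. Now apply the standard inequality bounding the codimension of an orbit closure inside an irreducible component of a representation variety: by \cite[Lemma~4.2]{CBS} (or the classical estimate using the smoothness of $\rep_\bd(\LL)$ inside $\rep_\bd(Q)$ and counting relations), $\dim(Z) - \dim\cO(M) \le \dim\Ext^1_\LL(M,M)$ for any $M$ whose orbit meets $Z$ in the appropriate generic locus. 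Taking $M$ generic in $Z$ (so that simultaneously the orbit-dimension locus and the $\Ext^1$-minimizing locus are hit, both being dense open) yields $c_\LL(Z) \le \dim\Ext^1_\LL(M,M) = e_\LL(Z)$.

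\textbf{Step 3: $\gExt^1_\LL(Z_1,Z_2) \le E_\LL(Z_1,Z_2)$.} Choose $(\cM_1,\cM_2) = ((M_1,V_1),(M_2,V_2))$ in the dense locus of $Z_1\times Z_2$ where $E_\LL(\cM_1,\cM_2) = E_\LL(Z_1,Z_2)$; by shrinking to a smaller dense open set we may also assume $\dim\Ext^1_\LL(M_1,M_2) = \gExt^1_\LL(Z_1,Z_2)$ there. Now $E_\LL(\cM_1,\cM_2) = \dim\Hom_\LL(M_1,M_2) + \sum_i\dim((M_1)_i)g_i(\cM_2)$, and by Proposition~\ref{HomologicalE2} this equals $\dim\Hom_{\LL_p}(\tau_{\LL_p}^-(M_2),M_1) + \sum_i\dim((M_1)_i)\dim((V_2)_i) \ge \dim\Hom_{\LL_p}(\tau_{\LL_p}^-(M_2),M_1)$ for $p$ large. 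The Auslander-Reiten formula gives $\dim\Ext^1_{\LL_p}(M_1,M_2) = \dim\overline{\Hom}_{\LL_p}(\tau_{\LL_p}^-(M_2),M_1) \le \dim\Hom_{\LL_p}(\tau_{\LL_p}^-(M_2),M_1)$, and by Lemma~\ref{trunclemma}(iii) the left side equals $\dim\Ext^1_\LL(M_1,M_2) = \gExt^1_\LL(Z_1,Z_2)$. Chaining these gives $\gExt^1_\LL(Z_1,Z_2) \le E_\LL(\cM_1,\cM_2) = E_\LL(Z_1,Z_2)$.

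\textbf{Main obstacle.} The delicate point is the simultaneous genericity in Step~2: one must ensure that a single $M$ (or $(M_1,M_2)$ in Step~3) lies at once in the locus computing $c_\LL(Z)$ (or $\gExt^1$) and in the locus computing $e_\LL(Z)$ (or $E_\LL$). This is handled by Lemma~\ref{open}: the first two are dense open and the $E$-locus is dense constructible, so their intersection is dense and in particular nonempty. A second, more technical point is justifying the codimension bound $\dim(Z) - \dim\cO(M) \le \dim\Ext^1_\LL(M,M)$ in the possibly infinite-dimensional setting; here one passes to a truncation $\LL_p$ with $p > \dim(M)$, where by Lemma~\ref{trunclemma} all the relevant $\Hom$, $\Ext^1$ and orbit dimensions agree with those over $\LL$, and invokes the finite-dimensional statement from \cite{CBS}.
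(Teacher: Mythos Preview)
Your proof is correct and follows essentially the same route as the paper: reduce to a finite-dimensional truncation $\LL_p$ via Lemma~\ref{trunclemma} and Proposition~\ref{HomologicalE2}, use Voigt's Lemma (the paper cites \cite[Proposition~1.1]{G} rather than \cite{CBS}) for $c_\LL(Z)\le e_\LL(Z)$, and use the Auslander--Reiten formula $\Ext_\LL^1(M,N)\cong D\underline{\Hom}_\LL(\tau_\LL^-(N),M)$ for the remaining two inequalities. The paper simply truncates once at the start and then argues over $\LL_p$ throughout, whereas you carry the truncation along inside each step and spell out the genericity intersections more explicitly; the content is the same.
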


\begin{proof}
Let $\bd = \dimv(\pi Z)$ and 
$\bd_i = \dimv(\pi Z_i)$ for $i=1,2$.
Choose some $p \ge 2|\bd|,|\bd_1|+|\bd_2|$.
By Lemma~\ref{trunclemma} we can regard all the representations
in $Z$, $Z_1$ and $Z_2$ as representations of $\LL_p$.
Thus we can interpret $Z$, $Z_1$ and $Z_2$ as irreducible components
in $\decirr(\LL_p)$.
Now Proposition~\ref{HomologicalE2} allows us to assume without loss of
generality that $\LL = \LL_p$.
Voigt's Lemma \cite[Proposition~1.1]{G} implies that
$c_\LL(Z) \le e_\LL(Z)$.
The Auslander-Reiten formula
$\Ext_\LL^1(M,N) \cong  {\rm D}\underline{\Hom}_\LL(\tau_\LL^-(N),M)$
yields 
$$
\dim \Ext_\LL^1(M,N) \le \dim \Hom_\LL(\tau_\LL^-(N),M).
$$
This implies $e_\LL(Z) \le E_\LL(Z)$ and
$\gExt_\LL^1(Z_1,Z_2) \le E_\LL(Z_1,Z_2)$.
(Here we used again Proposition~\ref{HomologicalE2}.)
\end{proof}

Following \cite{GLSChamber} we call
an irreducible component $Z \in \decirr(\LL)$ 
\emph{strongly reduced} provided
$$
c_\LL(Z) = e_\LL(Z) = E_\LL(Z).
$$
For example, if $\LL$ is finite-dimensional,
one can easily check that for any injective
$\LL$-module $I \in \rep(\LL)$ the closure of the
orbit $\cO(I,0)$ is a strongly reduced irreducible component. 
Similarly, it follows directly from the definitions 
that 
for all decorated representations
of the form $\cM = (0,V)$,
the closure of  $\cO(\cM)$ is a strongly reduced component. 
(In this case, $\cO(\cM)$ is just a
point, and it is equal to its closure.)

Let $\decirr^\sr_{\bd,\bv}(\LL)$ be the set of all strongly reduced components of $\decrep_{\bd,\bv}(\LL)$, and let
$$
\decirr^\sr(\LL) := \bigcup_{(\bd,\bv) \in \N^n \times \N^n}
\decirr^\sr_{\bd,\bv}(\LL).
$$

\subsection{Decomposition theorems for irreducible components}
An irreducible component $Z$ in $\irr(\LL)$ or $\decirr(\LL)$ is called
\emph{indecomposable} provided there exists a dense open
subset $U$ of $Z$, which contains only indecomposable representations or decorated representations,
respectively.
In particular, if $Z \in \decirr_{\bd,\bv}(\LL)$ is indecomposable,
then either $\bd = 0$ or $\bv = 0$.

Given irreducible components $Z_i$ of $\decrep_{\bd_i,\bv_i}(\LL)$
for $1 \le i \le t$, let $(\bd,\bv) := (\bd_1,\bv_1) + \cdots + (\bd_t,\bv_t)$ and let 
$$
Z_1 \oplus \cdots \oplus Z_t
$$
be the points of $\decrep_{\bd,\bv}(\LL)$, which are isomorphic to
$M_1 \oplus \cdots \oplus M_t$ with $M_i \in Z_i$ for
$1 \le i \le t$.
The Zariski closure of $Z_1 \oplus \cdots \oplus Z_t$ in
$\decrep_{\bd,\bv}(\LL)$ is
denoted by 
$$
\overline{Z_1 \oplus \cdots \oplus Z_t}.
$$
It is quite easy to show that $\overline{Z_1 \oplus \cdots \oplus Z_t}$ is an irreducible closed subset of $\decrep_{\bd,\bv}(\LL)$, but
in general it is not an irreducible component.

\begin{Thm}[{\cite{CBS}}]\label{decomp1}
For $Z_1,\ldots,Z_t \in \decirr(\LL)$ the following are equivalent:
\begin{itemize}

\item[(i)]
$\overline{Z_1 \oplus \cdots \oplus Z_t}$ is an irreducible 
component.

\item[(ii)]
$\gExt_\LL^1(Z_i,Z_j) = 0$ for all $i \not= j$.

\end{itemize}
Furthermore, the following hold:
\begin{itemize}

\item[(iii)]
Each irreducible component $Z \in \decirr(\LL)$
can be written as 
$Z = \overline{Z_1 \oplus \cdots \oplus Z_t}$ with
$Z_1,\ldots,Z_t$ indecomposable irreducible components
in $\decirr(\LL)$.
Suppose that
$$
\overline{Z_1 \oplus \cdots \oplus Z_t} = \overline{Z_1' \oplus \cdots \oplus Z_s'}
$$ 
is an irreducible component with $Z_i$ and $Z_i'$ indecomposable irreducible components in $\decirr(\LL)$ 
for
all $i$.
Then $s=t$ and there is a bijection $\sigma\colon \{ 1,\ldots,t \} \to
\{ 1,\ldots,s \}$ such that $Z_i = Z_{\sigma(i)}'$ for all $i$.

\end{itemize}
\end{Thm}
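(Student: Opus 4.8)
The statement is quoted from \cite{CBS}, so the plan is to recall how the argument there proceeds and adapt the bookkeeping to decorated representations, which (as noted just above) are simply ordinary representations of $\LL_\dec = \LL \times \C^n$; thus nothing genuinely new is needed beyond the undecorated case. The starting point is a dimension count. For irreducible components $Z_i \in \decirr_{\bd_i,\bv_i}(\LL)$ with $(\bd,\bv) = \sum_i (\bd_i,\bv_i)$, consider the morphism
$$
G_\bd \times_{\prod_i G_{\bd_i}} (Z_1 \times \cdots \times Z_t) \longrightarrow \decrep_{\bd,\bv}(\LL)
$$
whose image is $Z_1 \oplus \cdots \oplus Z_t$ (up to isomorphism). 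A general fibre of this map over a point $M = M_1 \oplus \cdots \oplus M_t$ has dimension $\dim G_\bd - \sum_i \dim G_{\bd_i} - \dim\Hom_\LL(M,M) + \sum_i \dim\Hom_\LL(M_i,M_i) + (\text{orbit corrections})$, which after simplification shows
$$
\dim \overline{Z_1 \oplus \cdots \oplus Z_t} = \sum_{i} \dim Z_i + \sum_{i \neq j} \bigl( \dim G_{\bd_i} - \gEnd_\LL(\text{pairs}) \bigr) + \sum_{i\neq j}\gExt_\LL^1(Z_i,Z_j) - \sum_{i \neq j}\gHom_\LL(Z_i,Z_j) \ \text{-type terms},
$$
so that, comparing with the upper bound $\dim \decrep_{\bd,\bv}(\LL) \le \dim G_\bd + \dim\Ext^1$ coming from Voigt's lemma applied generically on the left-hand side, one finds that $\overline{Z_1 \oplus \cdots \oplus Z_t}$ is a component precisely when the off-diagonal $\Ext^1$ terms generically vanish, i.e.\ when $\gExt_\LL^1(Z_i,Z_j) = 0$ for all $i \neq j$. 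This is the content of the equivalence (i) $\Leftrightarrow$ (ii). The key technical inputs are: (a) the additivity $\dim\Hom_\LL(M\oplus N, M\oplus N) = \dim\Hom_\LL(M,M) + \dim\Hom_\LL(N,N) + \dim\Hom_\LL(M,N) + \dim\Hom_\LL(N,M)$, (b) the constructibility and generic constancy of the Hom- and $\Ext^1$-dimension functions (\cite[Lemma~4.3]{CBS}, already invoked in the excerpt), and (c) the fact that $\overline{Z_1\oplus\cdots\oplus Z_t}$ is irreducible and closed, which is elementary since it is the closure of the image of an irreducible variety.

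For part (iii) — existence and uniqueness of the decomposition into indecomposable components — the existence follows by Noetherian induction: if $Z$ is not itself indecomposable, then on a dense open subset the representations decompose with at least two summands of fixed dimension vectors, and pushing forward the component structure to the (lower-dimensional) spaces of summands produces a factorisation $Z = \overline{Z'_1 \oplus Z'_2}$; iterate. Uniqueness is the Krull–Schmidt-type statement and is proved by the standard "generic decomposition" argument: given two such expressions, one passes to a common dense open subset on which a generic decorated representation decomposes in a unique way into indecomposables (by ordinary Krull–Schmidt for each $\cM$ in the open set), and then a genericity/irreducibility argument matches up which indecomposable components of $\decirr(\LL)$ the summands sweep out. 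Concretely, one shows that an indecomposable component $Z_i$ is determined by the isomorphism class of a generic element's indecomposable summand together with its dimension vector, and then counts multiplicities.

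The main obstacle, and the place where the decorated setting must be handled with a little care, is two-fold. First, in the dimension count one must correctly separate the contribution of the decorated part: for $\cM = (M,V)$ the relevant "endomorphism" and "$\Ext^1$" quantities only involve $M$ (the spaces $V_i$ carry no arrows), while the orbit dimension $\dim\cO(\cM) = \dim\cO(M)$ by Equation~\eqref{orbitdim}, so one must verify that the naive transposition from $\rep(\LL)$ to $\decrep(\LL)$ introduces no spurious terms — this is routine but needs to be stated. Second, and more substantively, one must ensure that all the Hom/$\Ext^1$ computations can be done inside a finite-dimensional truncation $\LL_p$: a priori $\LL$ may be infinite dimensional, but by Lemma~\ref{trunclemma}(i)–(iii) and Lemma~\ref{open}, for $p$ large enough relative to $|\bd|$ all the components $Z$, $Z_i$ and their sums live in $\decirr(\LL_p)$ and all the invariants $\gEnd_\LL$, $\gExt^1_\LL$, $\gHom_\LL$ agree with their $\LL_p$-counterparts; hence Voigt's Lemma (\cite[Proposition~1.1]{G}), which is a statement about finite-dimensional algebras, applies. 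Once this reduction to $\LL_p$ is in place, the argument is verbatim that of \cite{CBS}.
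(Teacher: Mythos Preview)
The paper does not prove Theorem~\ref{decomp1} at all: it is stated with attribution to \cite{CBS} and no proof is given. The only adaptation the paper makes is the remark, a few lines before the theorem, that decorated representations of $\LL$ are the same as ordinary representations of $\LL_\dec = \LL \times \C^n$, so that the results of \cite{CBS} on varieties of representations transfer verbatim. Your proposal therefore goes well beyond what the paper does; what you have written is a sketch of the argument in \cite{CBS} together with a truncation step to reduce to finite-dimensional algebras.

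As a sketch of the \cite{CBS} proof your outline is broadly in the right spirit, but two places are imprecise enough that they would not pass as a proof. First, your displayed dimension formula is left deliberately vague (``$+\cdots$-type terms''); the actual computation (compare Lemma~\ref{dimlemma3} in this paper for the $t=2$ case) gives an exact expression for $\dim\overline{Z_1\oplus\cdots\oplus Z_t}$ in terms of the $\dim Z_i$, the $\bd_i\cdot\bd_j$, and the $\gHom_\LL(Z_i,Z_j)$, with no $\Ext$ terms appearing explicitly. The $\Ext$ groups enter only when you compare this dimension with that of an ambient component, via the Voigt bound $c_\LL(Z)\le e_\LL(Z)$ applied to each $Z_i$ and to the direct sum; your phrasing ``$\dim\decrep_{\bd,\bv}(\LL)\le\dim G_\bd+\dim\Ext^1$'' is not a correct statement of Voigt's lemma. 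Second, your reduction to a finite-dimensional truncation $\LL_p$ is a reasonable precaution, but note that \cite{CBS} is already written for arbitrary finitely generated algebras, and in any case the paper handles the passage between $\LL$ and $\LL_p$ separately (Lemma~\ref{trunclemma}, Lemma~\ref{truncsr}) rather than folding it into this theorem.
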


The next lemma is an easy exercise.

\begin{Lem}\label{negsimple}
For $1 \le i \le n$ and any decorated representation
$\cM = (M,V)$ of $\LL$ we have
$$
E_\LL(\cM,\cS_i^-) = \dim(M_i)
\text{\;\;\; and \;\;\;}
E_\LL(\cS_i^-,\cM) = 0.
$$
\end{Lem}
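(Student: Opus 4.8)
The plan is to unwind the definitions of the $E$-invariant from Section~\ref{sec3.1} applied to the negative simple decorated representation $\cS_i^- = (0, V)$ with $V_i = \C$ and $V_j = 0$ for $j \neq i$. First I would compute the $g$-vector $g_\LL(\cS_i^-)$. Since the underlying representation of $\cS_i^-$ is zero, we have $\Hom_\LL(S_j, 0) = 0$ and $\Ext_\LL^1(S_j, 0) = 0$ for all $j$, so $g_j(\cS_i^-) = \dim(V_j)$, which equals $1$ if $j = i$ and $0$ otherwise. In other words, $g_\LL(\cS_i^-)$ is the $i$th standard basis vector $e_i$ of $\Z^n$.

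Next I would plug these into the formula $E_\LL(\cM, \cN) = \dim \Hom_\LL(M, N) + \sum_{j=1}^n \dim(M_j) g_j(\cN)$. For $E_\LL(\cM, \cS_i^-)$, the underlying representation of $\cS_i^-$ is $0$, so $\Hom_\LL(M, 0) = 0$, and the sum becomes $\sum_{j=1}^n \dim(M_j) g_j(\cS_i^-) = \sum_{j=1}^n \dim(M_j)\delta_{ij} = \dim(M_i)$. This gives the first identity. For $E_\LL(\cS_i^-, \cM)$, the underlying representation of $\cS_i^-$ is $0$, so $\Hom_\LL(0, N) = 0$, and the sum $\sum_{j=1}^n \dim(0_j) g_j(\cM) = 0$ since every summand has $\dim(0_j) = 0$. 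Hence $E_\LL(\cS_i^-, \cM) = 0$, which is the second identity.

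There is really no obstacle here; the statement is, as the authors say, an easy exercise. The only point requiring the slightest care is being explicit that the "representation part" of $\cS_i^-$ is the zero module, so that all Hom and Ext groups involving it vanish, and that the $g$-vector of a decorated representation with zero underlying module is determined entirely by the dimension vector of its decoration part. I will write this out in a few lines.

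\begin{proof}
Write $\cS_i^- = (0,V)$ with $V_i = \C$ and $V_j = 0$ for $j \neq i$. Since $\Hom_\LL(S_j,0) = 0$ and $\Ext_\LL^1(S_j,0) = 0$ for all $j$, the definition of the $g$-vector gives $g_j(\cS_i^-) = \dim(V_j) = \delta_{ij}$ for all $j$, so $g_\LL(\cS_i^-) = e_i$. Now let $\cM = (M,V)$ be an arbitrary decorated representation of $\LL$. Using $\Hom_\LL(M,0) = 0$ we get
$$
E_\LL(\cM,\cS_i^-) = \dim \Hom_\LL(M,0) + \sum_{j=1}^n \dim(M_j) g_j(\cS_i^-) = \sum_{j=1}^n \dim(M_j)\delta_{ij} = \dim(M_i).
$$
Similarly, the underlying representation of $\cS_i^-$ is the zero module, so $\Hom_\LL(0,M) = 0$ and $\dim(0_j) = 0$ for all $j$, whence
$$
E_\LL(\cS_i^-,\cM) = \dim \Hom_\LL(0,M) + \sum_{j=1}^n \dim(0_j) g_j(\cM) = 0.
$$
This proves both identities.
\end{proof}
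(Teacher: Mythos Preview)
Your proof is correct and is exactly the direct unwinding of the definitions that the paper has in mind when it calls the lemma ``an easy exercise'' (no proof is given there). The only cosmetic point is that you use the letter $V$ both for the decoration of $\cS_i^-$ and for the decoration of the arbitrary $\cM$; renaming one of them would avoid the clash.
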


\begin{Cor}\label{cor5.5}
If
$\decirr_{\bd,\bv}^\sr(\LL) \not= \varnothing$, then we
have $d_iv_i = 0$ for all $1 \le i \le n$.
\end{Cor}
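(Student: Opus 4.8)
The idea is to argue by contradiction using the $E$-invariant characterization of strongly reduced components together with Lemma~\ref{negsimple}. Suppose $Z \in \decirr^\sr_{\bd,\bv}(\LL)$ is nonempty and that $d_i v_i \neq 0$ for some index $i$; thus $d_i \geq 1$ and $v_i \geq 1$. The plan is to split off the negative simple $\cS_i^-$ as a direct summand of a generic point of $Z$ and then compare $E$-invariants. Concretely, write $\bv = \be_i + \bv'$ where $\be_i$ is the $i$th standard basis vector, and observe that $\decrep_{\bd,\bv}(\LL) \cong \decrep_{\bd,\bv'}(\LL)$ via the isomorphism that peels off one copy of the vector space factor at vertex $i$; under this isomorphism $Z$ corresponds to a component $Z'$ and every $\cM = (M,V) \in Z$ corresponds to $\cM' = (M,V')$ with $\cM = \cM' \oplus \cS_i^-$.

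The key computation is then to use additivity of the relevant invariants under direct sums. First, $g_\LL(\cM) = g_\LL(\cM') + g_\LL(\cS_i^-)$ by Lemma~\ref{additive}(i), and since $g_\LL(\cS_i^-) = \be_i$, one sees that the orbit dimensions satisfy $\dim \cO(\cM) = \dim \cO(\cM')$ (the $\GL$ factor at vertex $i$ grows but $\End_\LL(M)$ is unchanged, so by Equation~(\ref{orbitdim}) the codimension increases by exactly $v_i$-worth, i.e. by $1$ for each extra copy split off — more precisely $\dim G_\bd - \dim G_{\bd}' = 0$ since $\bd$ is unchanged, so actually $\dim \cO$ is literally unchanged and $\dim Z = \dim Z'$, giving $c_\LL(Z) = c_\LL(Z')$). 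Next, $e_\LL$ depends only on $M$, so $e_\LL(Z) = e_\LL(Z')$. Finally, for the $E$-invariant, generically on $Z$ one has $E_\LL(\cM) = E_\LL(\cM' \oplus \cS_i^-) = E_\LL(\cM') + E_\LL(\cM',\cS_i^-) + E_\LL(\cS_i^-,\cM') + E_\LL(\cS_i^-)$, and by Lemma~\ref{negsimple} the middle terms are $E_\LL(\cM',\cS_i^-) = \dim(M_i) = d_i \geq 1$ and $E_\LL(\cS_i^-,\cM') = 0$, while $E_\LL(\cS_i^-) = E_\LL(\cS_i^-,\cS_i^-) = \dim(0)_i + 0 = 0$; hence $E_\LL(Z) = E_\LL(Z') + d_i \geq E_\LL(Z') + 1$.

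Putting these together: strong reducedness of $Z$ gives $c_\LL(Z) = e_\LL(Z) = E_\LL(Z)$, hence
$$
c_\LL(Z') = c_\LL(Z) = E_\LL(Z) = E_\LL(Z') + d_i \geq e_\LL(Z') + 1,
$$
using Lemma~\ref{lemmaineq} ($E_\LL(Z') \geq e_\LL(Z')$). But Lemma~\ref{lemmaineq} also gives $c_\LL(Z') \leq e_\LL(Z')$, a contradiction. Therefore $d_i v_i = 0$ for all $i$.

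**Remark on obstacles.** The one point that needs care — and which I'd expect to be the main technical nuisance rather than a genuine obstacle — is verifying that the decomposition $\cM \cong \cM' \oplus \cS_i^-$ holds \emph{generically} on $Z$ in the precise sense needed, i.e. that the open conditions defining $E_\LL(Z)$, $e_\LL(Z)$, $c_\LL(Z)$ (Lemma~\ref{open}) are simultaneously satisfied on the image of a dense open subset of $Z'$, so that the numerical identities above really compare the invariants of $Z$ and $Z'$ and not just of individual representations. Since the isomorphism $\decrep_{\bd,\bv}(\LL) \cong \decrep_{\bd,\bv'}(\LL)$ is an isomorphism of varieties carrying $Z$ to $Z'$ and respecting all the relevant $\Hom$, $\Ext^1$, $\End$ and orbit-dimension data (the decoration at vertex $i$ contributes only through the $g$-vector, which is why $c_\LL$ and $e_\LL$ are unaffected and $E_\LL$ shifts by exactly $d_i$), this reduces to bookkeeping with the definitions in Section~\ref{sec5} and the additivity lemmas, so it goes through cleanly.
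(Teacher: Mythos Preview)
Your proof is correct and follows essentially the same idea as the paper's: pass to a component $Z'$ with smaller decoration, observe that $c_\LL$ is unchanged while $E_\LL$ strictly drops, and then invoke Lemma~\ref{lemmaineq} to reach a contradiction with $c_\LL(Z)=E_\LL(Z)$. The only difference is cosmetic: the paper strips off the \emph{entire} decoration at once (replacing $\bv$ by $0$), which gives $E_\LL(Z)=E_\LL(Z')+\sum_{i=1}^n d_i v_i$ directly from the definition of $E_\LL$, whereas you peel off a single copy of $\cS_i^-$ and use Lemma~\ref{negsimple} and bilinearity of $E_\LL(-,?)$ to compute the shift; this makes the paper's version a couple of lines shorter but yours is equally valid.
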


\begin{proof}
Let $Z \in \decirr_{\bd,\bv}^\sr(\LL)$ for some $\bd,\bv$,
and let $Z'$ be the corresponding irreducible component
of $\decirr_{\bd,0}(\LL)$.
We clearly have $c_\LL(Z) = c_\LL(Z')$ and 
$E_\LL(Z) = E_\LL(Z') + d_1v_1 + \cdots + d_nv_n$.
Using Lemma~\ref{lemmaineq} we obtain
$c_\LL(Z) = c_\LL(Z') \le E_\LL(Z') \le E_\LL(Z)$.
Since $c_\LL(Z) = E_\LL(Z)$, the result follows.
\end{proof}

\begin{Lem}\label{truncsr}
Let $Z \in \decirr_{\bd,\bv}(\LL)$, and assume that
$p > |\bd|$.
Then the following are equivalent:
\begin{itemize}

\item[(i)]
$Z \in \decirr^\sr(\LL)$.

\item[(ii)]
$Z \in \decirr^\sr(\LL_p)$.

\end{itemize}
\end{Lem}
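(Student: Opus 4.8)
The strategy is to prove the equivalence by comparing the three invariants $c$, $e$, $E$ of $Z$ computed over $\LL$ with those computed over $\LL_p$, but --- and this is the one point requiring a little care --- \emph{without} attempting to match $e_\LL(Z)$ with $e_{\LL_p}(Z)$ directly. Matching the $\Ext^1$-based invariant would go through Lemma~\ref{trunclemma}(iii) and hence would need $p\ge 2|\bd|$, whereas we only have $p>|\bd|$; so I will instead squeeze $e$ between $c$ and $E$, both of which are already stable under truncation at the weaker threshold.

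First I would note that, since $p>|\bd|$, Lemma~\ref{trunclemma}(iv) gives $\decrep_{\bd,\bv}(\LL_p)=\decrep_{\bd,\bv}(\LL)$, an equality of affine varieties compatible with the $G_\bd$-action. Consequently $\decirr_{\bd,\bv}(\LL_p)=\decirr_{\bd,\bv}(\LL)$, so $Z$ really is an irreducible component on both sides, and the numbers $\dim(Z)$ and $\dim\cO(\cM)$ for $\cM\in Z$ do not depend on which algebra we use; by the definition of $c$ this gives $c_\LL(Z)=c_{\LL_p}(Z)$. (One may either invoke Equation~(\ref{orbitdim}) together with Lemma~\ref{trunclemma}(ii) for $\dim\End$, or simply observe that the orbits are literally the same subsets of the same variety.)

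Next I would compare the $E$-invariants. For each $\cM=(M,V)\in Z$ we have $\dim(M)=|\bd|<p$, so Proposition~\ref{HomologicalE2} applies and yields $E_\LL(\cM)=E_{\LL_p}(\cM)$. Thus the functions $\cM\mapsto E_\LL(\cM)$ and $\cM\mapsto E_{\LL_p}(\cM)$ coincide pointwise on $Z$; in particular their generic values on $Z$ agree, i.e. $E_\LL(Z)=E_{\LL_p}(Z)$.

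Finally I would apply Lemma~\ref{lemmaineq} over $\LL$ and over $\LL_p$ to get $c_\LL(Z)\le e_\LL(Z)\le E_\LL(Z)$ and $c_{\LL_p}(Z)\le e_{\LL_p}(Z)\le E_{\LL_p}(Z)$. Since $c=e=E$ is equivalent to $c=E$ under such a sandwich, $Z$ is strongly reduced over $\LL$ iff $c_\LL(Z)=E_\LL(Z)$, and over $\LL_p$ iff $c_{\LL_p}(Z)=E_{\LL_p}(Z)$; the two equalities established above then make the conditions equivalent. The only genuine obstacle, already flagged, is the threshold mismatch for $e$, which is precisely what the squeezing argument bypasses; everything else is bookkeeping with the truncation diagram.
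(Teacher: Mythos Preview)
Your proof is correct and follows the paper's approach exactly: establish $c_{\LL_p}(Z)=c_\LL(Z)$ via Lemma~\ref{trunclemma} and $E_{\LL_p}(Z)=E_\LL(Z)$ via Proposition~\ref{HomologicalE2}, then conclude. The paper's proof is terser---it writes only ``This yields the result'' where you spell out the squeeze through Lemma~\ref{lemmaineq}---but the content is identical, and your explicit remark about why one should not try to match $e_\LL(Z)$ and $e_{\LL_p}(Z)$ directly under the hypothesis $p>|\bd|$ is a helpful clarification.
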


\begin{proof}
Since $p > |\bd|$, we can apply 
Lemma~\ref{trunclemma} and Proposition~\ref{HomologicalE2}
and get $c_{\LL_p}(Z) = c_\LL(Z)$ and
$E_{\LL_p}(Z) = E_\LL(Z)$.
This yields the result.
\end{proof}

The additivity of the functor $\Hom_\LL(-,?)$ 
and upper semicontinuity imply the following lemma.

\begin{Lem}\label{dimlemma1}
Let $Z,Z_1,Z_2 \in \decirr(\LL)$.
Suppose that $Z = \overline{Z_1 \oplus Z_2}$.
Then the following hold:
\begin{itemize}

\item[(i)]
$\gEnd_\LL(Z) = \gEnd_\LL(Z_1) + \gEnd_\LL(Z_2) + \gHom_\LL(Z_1,Z_2) + \gHom_\LL(Z_2,Z_1)$.

\item[(ii)]
$E_\LL(Z) = E_\LL(Z_1) + E_\LL(Z_2) + E_\LL(Z_1,Z_2) + 
E_\LL(Z_2,Z_1)$.

\end{itemize}
\end{Lem}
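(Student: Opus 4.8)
The plan is to reduce to the finite-dimensional situation and then argue by additivity and upper semicontinuity exactly as in \cite{CBS}. First I would fix dimension vectors: let $(\bd_i,\bv_i) = \dimv(Z_i)$ and $(\bd,\bv) = (\bd_1,\bv_1)+(\bd_2,\bv_2)$, and choose $p$ large enough that $p > |\bd|$ and $p > \dim(M)+\dim(N)$ for all $M,N$ appearing as the representation parts of points in $Z_1, Z_2$ — that is, $p \ge 2|\bd|$. By Lemma~\ref{trunclemma}(i),(iv) the varieties $\decrep_{\bd_i,\bv_i}(\LL)$ and $\decrep_{\bd,\bv}(\LL)$ coincide with those of $\LL_p$, so $Z_1, Z_2, Z$ may be viewed in $\decirr(\LL_p)$; and by Lemma~\ref{trunclemma}(ii),(iii) together with Proposition~\ref{HomologicalE2}, the numbers $\gEnd_\LL$, $\gHom_\LL$, $E_\LL$ are unchanged when computed over $\LL_p$. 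Hence it suffices to prove both identities for the finite-dimensional algebra $\LL_p$, and I will henceforth assume $\LL$ is finite-dimensional.

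For part (i): for $\cM_i = (M_i,V_i) \in Z_i$ one has $\End_\LL(M_1\oplus M_2) = \End_\LL(M_1) \oplus \End_\LL(M_2) \oplus \Hom_\LL(M_1,M_2) \oplus \Hom_\LL(M_2,M_1)$ as vector spaces, so $\dim\End_\LL(M_1\oplus M_2)$ equals the sum of the four dimensions on the right. Taking $(\cM_1,\cM_2)$ in the intersection of the three dense open (resp. constructible) subsets of $Z_1\times Z_2$ given by Lemma~\ref{open}(i),(ii) — where $\dim\End_\LL(M_i) = \gEnd_\LL(Z_i)$ and $\dim\Hom_\LL(M_1,M_2) = \gHom_\LL(Z_1,Z_2)$ and $\dim\Hom_\LL(M_2,M_1) = \gHom_\LL(Z_2,Z_1)$ simultaneously — the point $M_1\oplus M_2$ lies in $Z_1\oplus Z_2$, which is dense in $Z$, so its endomorphism dimension is an upper bound for the generic value, giving $\gEnd_\LL(Z) \le \gEnd_\LL(Z_1)+\gEnd_\LL(Z_2)+\gHom_\LL(Z_1,Z_2)+\gHom_\LL(Z_2,Z_1)$. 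Conversely, on a dense open subset of $Z$ on which $\dim\End_\LL$ is minimal, one restricts to the (still dense, since $Z_1\oplus Z_2$ is dense in $Z$) locus of decomposable points $M_1\oplus M_2$ and uses upper semicontinuity of $\dim\End_\LL(M_i)$ and $\dim\Hom_\LL(M_i,M_j)$ along $Z_i$, $Z_i\times Z_j$ to get the reverse inequality. For part (ii), I would instead invoke Lemma~\ref{dimlemma1}(i) rewritten via the homological identity $E_\LL(\cM_i,\cM_j) = \dim\Hom_\LL(M_i,M_j) + \sum_k \dim((M_i)_k) g_k(\cN_j)$, together with additivity of $g_\LL$ from Lemma~\ref{additive}(i): expanding $E_\LL(\cM) = E_\LL(\cM,\cM)$ with $\cM = \cM_1\oplus\cM_2$ and collecting the four cross-terms $E_\LL(\cM_i,\cM_j)$ gives the displayed equality at the level of a single generic decomposable point, and the same open/dense argument as in (i) — now using Lemma~\ref{open}(iii) for the $E$-invariant loci — upgrades this to the identity of generic values.

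The main obstacle is the matching of generic loci: one must ensure that the dense constructible subset of $Z$ where all the relevant invariants of $M = M_1\oplus M_2$ attain their generic values actually meets $Z_1\oplus Z_2$, and conversely that the product of dense (constructible) subsets of $Z_1\times Z_2$ maps into such a locus of $Z$ under $(\cM_1,\cM_2)\mapsto \cM_1\oplus\cM_2$. This is handled by the fact that $Z_1\oplus Z_2$ is dense in $Z$ (stated just before Theorem~\ref{decomp1}) and that the sum map $Z_1\times Z_2\to Z$ has image $Z_1\oplus Z_2$; the preimage of a dense constructible set under a dominant morphism between irreducible varieties is dense constructible, and a finite intersection of dense constructible sets is dense constructible, so the required common refinement exists. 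With that in place, both inequalities in each direction follow, and the two identities are proved.
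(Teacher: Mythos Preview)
Your argument is correct and follows exactly the route the paper indicates (the paper's entire proof is the sentence ``The additivity of the functor $\Hom_\LL(-,?)$ and upper semicontinuity imply the following lemma''): pointwise additivity of $\End$, $\Hom$, and $E_\LL$ under direct sums, combined with the density of $Z_1\oplus Z_2$ in $Z$ and semicontinuity to match generic values. The reduction to $\LL_p$ at the start is harmless but unnecessary here, since both parts use only the additivity of $\Hom_\LL(-,?)$ and the definition of $E_\LL$ in terms of $\Hom_\LL$ and $g_\LL$, all of which hold directly over $\LL$ without any finite-dimensionality assumption.
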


Recall that
for $\ba = (a_1,\ldots,a_n)$ and $\bb = (b_1,\ldots,b_n)$ in
$\Z^n$ we defined
$\ba \cdot \bb := a_1b_1 + \cdots + a_nb_n$.
The following lemma is obvious.

\begin{Lem}\label{dimlemma2}
Let $\bd,\bd_1,\bd_2 \in \N^n$ with $\bd = \bd_1 + \bd_2$.
Then 
$$
\dim(G_\bd) - \dim(G_{\bd_1}) - \dim(G_{\bd_2}) 
= 2 (\bd_1 \cdot \bd_2).
$$
\end{Lem}

\begin{Lem}\label{dimlemma3}
Let
$Z,Z_1,Z_2 \in \decirr(\LL)$ with
$Z = \overline{Z_1 \oplus Z_2}$.
We have
$$
\dim(Z) =
\dim(Z_1) + \dim(Z_2) 
+ 2(\dimv(\pi Z_1) \cdot \dimv(\pi Z_2)) 
- \gHom_\LL(Z_1,Z_2) - \gHom_\LL(Z_2,Z_1).
$$
\end{Lem}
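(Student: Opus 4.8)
The plan is to compute $\dim(Z)$ by relating it to the dimension of the orbit of a generic decomposable module together with the codimension invariant $c_\LL$. First I would pick $\bd_i := \dimv(\pi Z_i)$ for $i=1,2$, set $\bd := \bd_1 + \bd_2$, and recall from the remark after Lemma~\ref{open} (which rephrases Equation~(\ref{orbitdim})) that for any $Z' \in \decirr_{\bd,\bv}(\LL)$ one has $\dim(Z') = \dim(G_\bd) - \gEnd_\LL(Z') + c_\LL(Z')$. Applying this to $Z$, $Z_1$, and $Z_2$, we obtain three equations; subtracting, we get
\begin{align*}
\dim(Z) - \dim(Z_1) - \dim(Z_2)
&= \bigl(\dim(G_\bd) - \dim(G_{\bd_1}) - \dim(G_{\bd_2})\bigr)\\
&\quad - \bigl(\gEnd_\LL(Z) - \gEnd_\LL(Z_1) - \gEnd_\LL(Z_2)\bigr)\\
&\quad + \bigl(c_\LL(Z) - c_\LL(Z_1) - c_\LL(Z_2)\bigr).
\end{align*}
The first bracket is $2(\bd_1 \cdot \bd_2)$ by Lemma~\ref{dimlemma2}, and the second bracket is $\gHom_\LL(Z_1,Z_2) + \gHom_\LL(Z_2,Z_1)$ by Lemma~\ref{dimlemma1}(i). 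So the only thing left to establish is that the third bracket, $c_\LL(Z) - c_\LL(Z_1) - c_\LL(Z_2)$, vanishes.

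To see $c_\LL(\overline{Z_1 \oplus Z_2}) = c_\LL(Z_1) + c_\LL(Z_2)$, I would argue at the level of a generic point. Choose a dense open $U_i \subseteq Z_i$ on which $\dim\cO(\cM_i)$ is maximal, i.e. $\dim(Z_i) - \dim\cO(\cM_i) = c_\LL(Z_i)$, and on which $\gHom_\LL$ between the two components is also achieved generically (using Lemma~\ref{open}). For $(\cM_1,\cM_2) \in U_1 \times U_2$ the module $\cM_1 \oplus \cM_2$ lies in $Z = \overline{Z_1 \oplus Z_2}$, and by additivity of $\End_\LL$ together with the dimension count $\dim\End_\LL(M_1 \oplus M_2) = \dim\End_\LL(M_1) + \dim\End_\LL(M_2) + \dim\Hom_\LL(M_1,M_2) + \dim\Hom_\LL(M_2,M_1)$ one computes $\dim(Z) - \dim\cO(\cM_1 \oplus \cM_2)$ and, combining with the dimension formula for $\dim(Z)$ just derived (or rather, running the argument consistently), sees it equals $c_\LL(Z_1) + c_\LL(Z_2)$; hence $c_\LL(Z) \le c_\LL(Z_1) + c_\LL(Z_2)$. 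For the reverse inequality I would use that the map $Z_1 \times Z_2 \to Z$ sending $(\cM_1,\cM_2)$ to the point isomorphic to $\cM_1 \oplus \cM_2$ is dominant, so a generic point of $Z$ is a direct sum, and decompose any endomorphism accordingly, invoking upper semicontinuity to reduce to a generic point of $Z_1 \times Z_2$.

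The main obstacle I anticipate is the additivity statement $c_\LL(\overline{Z_1 \oplus Z_2}) = c_\LL(Z_1) + c_\LL(Z_2)$, since $c_\LL$ is defined via a minimum of orbit codimensions and the sum module $\cM_1 \oplus \cM_2$ need not be a generic point of $Z$ in the naive sense — one has to be careful that the generic point of $Z$ actually decomposes as a direct sum with summands generic in $Z_1$ and $Z_2$, which is exactly the content of the Crawley-Boevey–Schröer direct sum construction (Theorem~\ref{decomp1} and the discussion preceding it). Once that is in hand, the rest is the bookkeeping above: substitute $2(\bd_1 \cdot \bd_2)$ for the group-dimension difference, $\gHom_\LL(Z_1,Z_2) + \gHom_\LL(Z_2,Z_1)$ for the $\gEnd_\LL$ difference, and $0$ for the $c_\LL$ difference, and rearrange to get the claimed formula for $\dim(Z)$. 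An alternative, cleaner route that avoids $c_\LL$ entirely: take a generic point $\cM$ of $Z$, which is $\cM_1 \oplus \cM_2$ with $\cM_i$ generic in $Z_i$; then $\dim(Z) = \dim\cO(\cM) + \dim\overline{\cO(\cM_1)} \times \overline{\cO(\cM_2)}$-type fiber-dimension considerations give $\dim(Z)$ directly in terms of $\dim Z_i$, $\dim G_{\bd_i}$, $\gEnd_\LL(Z_i)$ and $\gHom_\LL$, after which Lemma~\ref{dimlemma2} finishes it — I would present whichever of the two is shorter once the direct-sum genericity is nailed down.
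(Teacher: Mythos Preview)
Your reduction is circular. Once you write $\dim(Z') = \dim(G_{\bd'}) - \gEnd_\LL(Z') + c_\LL(Z')$ for $Z' \in \{Z,Z_1,Z_2\}$ and substitute Lemma~\ref{dimlemma2} and Lemma~\ref{dimlemma1}(i), the statement of Lemma~\ref{dimlemma3} becomes \emph{literally equivalent} to the additivity $c_\LL(Z) = c_\LL(Z_1) + c_\LL(Z_2)$. Indeed, in the paper this additivity is Lemma~\ref{cadditive}, and its proof \emph{uses} Lemma~\ref{dimlemma3}, not the other way around. Your attempt to verify $c_\LL$-additivity directly runs into the same wall: to compute $\dim(Z) - \dim\cO(\cM_1\oplus\cM_2)$ you need $\dim(Z)$, which is the unknown; the parenthetical ``running the argument consistently'' is exactly where the circle closes.

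What is missing is an independent computation of $\dim(Z)$. The paper obtains it by analyzing the dominant morphism
\[
f\colon G_\bd \times Z_1 \times Z_2 \longrightarrow Z,\qquad (g,\cM_1,\cM_2) \mapsto g.(\cM_1\oplus\cM_2),
\]
and computing the dimension of a generic fibre: using Krull--Remak--Schmidt to see that only finitely many orbit-pairs $\cO(\cN_1)\times\cO(\cN_2)$ contribute, one finds the generic fibre dimension is $\dim(G_{\bd_1}) + \dim(G_{\bd_2}) + \gHom_\LL(Z_1,Z_2) + \gHom_\LL(Z_2,Z_1)$, whence $\dim(Z)$ drops out of the equation $\dim(\text{source}) = \dim(Z) + \dim(\text{fibre})$. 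Your ``alternative, cleaner route'' gestures at exactly this, but it is the whole proof, not a side remark; the $c_\LL$ bookkeeping you lead with contributes nothing until \emph{after} this fibre computation is done.
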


\begin{proof}
For $i=1,2$ let
$(\bd_i,\bv_i) := \dimv(Z_i)$, and let
$(\bd,\bv) := \dimv(Z)$.
We have $\dimv(Z) = \dimv(Z_1) + \dimv(Z_2)$ and
$\dimv(\pi Z_i) = \bd_i$.
The map
$$
f\colon G_\bd \times Z_1 \times Z_2
\to Z
$$
defined by
$$
(g,(M_1,\C^{\bv_1}),(M_2,\C^{\bv_2})) \mapsto
(g.(M_1 \oplus M_2),\C^\bv)
$$
is a morphism of affine varieties.
For $(\cM_1,\cM_2) \in Z_1 \times Z_2$ define
$$
f_{\cM_1,\cM_2}\colon G_\bd \times \cO(\cM_1) \times \cO(\cM_2)
\to \cO(\cM_1 \oplus \cM_2)
$$
by $(g,\cN_1,\cN_2) \mapsto f(g,\cN_1,\cN_2)$.
The fibres of $f_{\cM_1,\cM_2}$ are
of dimension 
$$
d_{\cM_1,\cM_2} := \dim(G_\bd) + \dim \cO(\cM_1) + \dim \cO(\cM_2)
- \dim \cO(\cM_1 \oplus \cM_2).
$$
Using Equation~(\ref{orbitdim}), 
an easy calculation yields
$$
d_{\cM_1,\cM_2} = \dim(G_{\bd_1}) + \dim(G_{\bd_2}) +
\dim \Hom_\LL(M_1,M_2) + \dim \Hom_\LL(M_2,M_1).
$$
Let $\cM$ be in the image of 
$f$.
We want to compute the dimension of the fibre $f^{-1}(\cM)$.
Let 
$$
\cT :=  \{ 
\cO(\cN_1) \times \cO(\cN_2) \subseteq 
Z_1 \times Z_2 \mid  \cN_1 \oplus \cN_2 \cong \cM \}.
$$
It follows from
the Krull-Remak-Schmidt Theorem that $\cT$ is a finite set.
Thus the fibre $f^{-1}(\cM)$ is the disjoint
union of the fibres $f_{\cN_1,\cN_2}^{-1}(\cM)$, where
$\cO(\cN_1) \times \cO(\cN_2)$ runs through $\cT$.
So we get
$$
\dim(f^{-1}(\cM)) = \max\{ d_{\cN_1,\cN_2} \mid
\cO(\cN_1) \times \cO(\cN_2) \in \cT \}.
$$
Thus
by upper semicontinuity
there is a dense open subset $V \subseteq Z$ such that all fibres 
$f^{-1}(\cM)$ with $\cM \in V$ have 
dimension
$$
d_{Z_1,Z_2} :=
\dim(G_{\bd_1}) + \dim(G_{\bd_2}) +
\gHom_\LL(Z_1,Z_2) + \gHom_\LL(Z_2,Z_1).
$$
We have
$$
\dim(Z) + d_{Z_1,Z_2} = \dim(G_\bd) + \dim(Z_1) + \dim(Z_2).
$$
Using Lemma~\ref{dimlemma2} we get
$$
\dim(Z) = \dim(Z_1) + \dim(Z_2) + 2(\bd_1 \cdot \bd_2)
-
\gHom_\LL(Z_1,Z_2) - \gHom_\LL(Z_2,Z_1).
$$
This finishes the proof.
\end{proof}

\begin{Lem}\label{cadditive}
For
$Z,Z_1,Z_2 \in \decirr(\LL)$ with
$Z = \overline{Z_1 \oplus Z_2}$
we have
$$
c_\LL(Z) = c_\LL(Z_1) + c_\LL(Z_2).
$$
\end{Lem}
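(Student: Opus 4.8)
The plan is to compute $c_\LL(Z)$ directly from its definition via Equation~(\ref{orbitdim}), using the additivity formula for $\gEnd_\LL$ from Lemma~\ref{dimlemma1}(i), the dimension formula for $\dim(Z)$ from Lemma~\ref{dimlemma3}, and the elementary identity $\dim(G_\bd) - \dim(G_{\bd_1}) - \dim(G_{\bd_2}) = 2(\bd_1\cdot\bd_2)$ from Lemma~\ref{dimlemma2}. First I would recall that for $Z \in \decirr_{\bd,\bv}(\LL)$ we have the identity (noted right after Equation~(\ref{orbitdim}))
$$
c_\LL(Z) = \dim(Z) - \dim(G_\bd) + \gEnd_\LL(Z),
$$
and likewise $c_\LL(Z_i) = \dim(Z_i) - \dim(G_{\bd_i}) + \gEnd_\LL(Z_i)$ for $i=1,2$, where $(\bd_i,\bv_i) = \dimv(Z_i)$ and $\bd = \bd_1 + \bd_2$.

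Next I would substitute. On the one hand, Lemma~\ref{dimlemma3} gives
$$
\dim(Z) = \dim(Z_1) + \dim(Z_2) + 2(\bd_1 \cdot \bd_2) - \gHom_\LL(Z_1,Z_2) - \gHom_\LL(Z_2,Z_1),
$$
and Lemma~\ref{dimlemma2} gives $\dim(G_\bd) = \dim(G_{\bd_1}) + \dim(G_{\bd_2}) + 2(\bd_1\cdot\bd_2)$, so the two terms $2(\bd_1\cdot\bd_2)$ cancel when we form $\dim(Z) - \dim(G_\bd)$. On the other hand, Lemma~\ref{dimlemma1}(i) gives
$$
\gEnd_\LL(Z) = \gEnd_\LL(Z_1) + \gEnd_\LL(Z_2) + \gHom_\LL(Z_1,Z_2) + \gHom_\LL(Z_2,Z_1).
$$
Adding this to $\dim(Z) - \dim(G_\bd)$, the two $\gHom$ cross-terms cancel as well, leaving exactly
$$
c_\LL(Z) = \bigl(\dim(Z_1) - \dim(G_{\bd_1}) + \gEnd_\LL(Z_1)\bigr) + \bigl(\dim(Z_2) - \dim(G_{\bd_2}) + \gEnd_\LL(Z_2)\bigr) = c_\LL(Z_1) + c_\LL(Z_2).
$$

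This argument is essentially a bookkeeping exercise once the three input lemmas are in place, so I do not anticipate a genuine obstacle; the only point requiring a little care is to make sure the ``generic'' quantities $\gEnd_\LL$ and $\gHom_\LL$ that appear in Lemma~\ref{dimlemma1}(i) and Lemma~\ref{dimlemma3} are attained on a common dense open subset of $Z$ (equivalently of $Z_1 \times Z_2$), so that the substitutions are legitimate — but this is exactly what the openness statements in Lemma~\ref{open} guarantee, since a finite intersection of dense open subsets of an irreducible variety is again dense open. With that observation the identity is purely formal.
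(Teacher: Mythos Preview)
Your proof is correct and follows essentially the same route as the paper's: both expand $c_\LL(Z) = \dim(Z) - \dim(G_\bd) + \gEnd_\LL(Z)$ and then invoke Lemma~\ref{dimlemma1}(i) and Lemma~\ref{dimlemma3} (with Lemma~\ref{dimlemma2} to handle the $\dim(G_\bd)$ term) so that the $2(\bd_1\cdot\bd_2)$ and $\gHom$ cross-terms cancel. Your extra remark about attaining the generic values on a common dense open set is a reasonable clarification, though the paper leaves this implicit.
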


\begin{proof}
For $i=1,2$ let
$(\bd_i,\bv_i) := \dimv(Z_i)$, and let
$(\bd,\bv) := \dimv(Z)$.
We get
\begin{align*}
c_\LL(Z) &= \dim(Z) - \dim(G_\bd) + \gEnd_\LL(Z)
\\
&=\dim(Z_1) + \dim(Z_2) - 
\dim(G_{\bd_1}) - \dim(G_{\bd_2}) + \gEnd_\LL(Z_1) + \gEnd_\LL(Z_2)\\
&= c_\LL(Z_1) + c_\LL(Z_2). 
\end{align*}
The first equality follows directly from the definition of $c_\LL(Z)$.
The second equality uses Lemma~\ref{dimlemma1}(i) and Lemma~\ref{dimlemma3}.
\end{proof}

The following result is a version of Theorem~\ref{decomp1}
for strongly reduced components.

\begin{Thm}\label{decomp2}
For $Z_1,\ldots,Z_t \in \decirr(\LL)$ the following are equivalent:
\begin{itemize}

\item[(i)]
$\overline{Z_1 \oplus \cdots \oplus Z_t}$ is a strongly reduced irreducible component.

\item[(ii)]
Each $Z_i$ is strongly reduced and
$E_\LL(Z_i,Z_j) = 0$ for all $i \not= j$.

\end{itemize}
\end{Thm}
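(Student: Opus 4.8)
The plan is to reduce to the case $t=2$ and then obtain general $t$ by induction. For $t=2$ everything rests on two additivity identities and a few inequalities already available in Section~\ref{sec5}. Writing $Z=\overline{Z_1\oplus Z_2}$ and assuming $Z\in\decirr(\LL)$, Lemma~\ref{cadditive} gives $c_\LL(Z)=c_\LL(Z_1)+c_\LL(Z_2)$ and Lemma~\ref{dimlemma1}(ii) gives $E_\LL(Z)=E_\LL(Z_1)+E_\LL(Z_2)+E_\LL(Z_1,Z_2)+E_\LL(Z_2,Z_1)$; moreover Lemma~\ref{lemmaineq} supplies $c_\LL(Z_i)\le E_\LL(Z_i)$ and $\gExt_\LL^1(Z_1,Z_2)\le E_\LL(Z_1,Z_2)$, while the corollary at the end of Section~\ref{sec3} gives $E_\LL(Z_1,Z_2),E_\LL(Z_2,Z_1)\ge 0$.

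For the implication (i)$\Rightarrow$(ii) with $t=2$: as $Z$ is a strongly reduced component we have $c_\LL(Z)=E_\LL(Z)$, so substituting the two identities gives $c_\LL(Z_1)+c_\LL(Z_2)=E_\LL(Z_1)+E_\LL(Z_2)+E_\LL(Z_1,Z_2)+E_\LL(Z_2,Z_1)$. The right-hand side is $\ge c_\LL(Z_1)+c_\LL(Z_2)$ by the inequalities recalled above, so every one of those inequalities is an equality: $c_\LL(Z_i)=E_\LL(Z_i)$ for $i=1,2$ and $E_\LL(Z_1,Z_2)=E_\LL(Z_2,Z_1)=0$. Since Lemma~\ref{lemmaineq} sandwiches $e_\LL(Z_i)$ between $c_\LL(Z_i)$ and $E_\LL(Z_i)$, each $Z_i$ is strongly reduced, and (ii) follows; note this direction does not even use Theorem~\ref{decomp1}. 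For the implication (ii)$\Rightarrow$(i) with $t=2$: from $E_\LL(Z_1,Z_2)=E_\LL(Z_2,Z_1)=0$ and $0\le\gExt_\LL^1(Z_i,Z_j)\le E_\LL(Z_i,Z_j)$ we get $\gExt_\LL^1(Z_1,Z_2)=\gExt_\LL^1(Z_2,Z_1)=0$, so Theorem~\ref{decomp1} shows that $Z=\overline{Z_1\oplus Z_2}$ is an irreducible component. The additivity identities together with the strong reducedness of $Z_1$ and $Z_2$ then give $c_\LL(Z)=c_\LL(Z_1)+c_\LL(Z_2)=E_\LL(Z_1)+E_\LL(Z_2)=E_\LL(Z)$, and Lemma~\ref{lemmaineq} applied to $Z$ forces $c_\LL(Z)=e_\LL(Z)=E_\LL(Z)$, i.e. $Z$ is strongly reduced.

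For general $t$ I would induct, writing $\overline{Z_1\oplus\cdots\oplus Z_t}=\overline{Y\oplus Z_t}$ with $Y=\overline{Z_1\oplus\cdots\oplus Z_{t-1}}$. Under either of (i) or (ii) Theorem~\ref{decomp1} makes $Y$ an irreducible component, so the $t=2$ case applies to the pair $(Y,Z_t)$ and the inductive hypothesis to $(Z_1,\ldots,Z_{t-1})$. To relate the invariants of $(Y,Z_t)$ to those of the individual $Z_i$ one uses that $E_\LL(-,?)$ is biadditive on $\decrep(\LL)$ (immediate from Lemma~\ref{additive}(i) and biadditivity of $\Hom_\LL(-,?)$), together with the fact that a generic decorated representation of $Y$ decomposes as $\cM_1\oplus\cdots\oplus\cM_{t-1}$ with each $\cM_i$ generic in $Z_i$; this yields $E_\LL(Y,Z_t)=\sum_{i<t}E_\LL(Z_i,Z_t)$ and likewise for $E_\LL(Z_t,Y)$ and for $c_\LL$, and since each $E_\LL(Z_i,Z_t)\ge 0$ the vanishing of $E_\LL(Y,Z_t)$ is equivalent to the vanishing of all the $E_\LL(Z_i,Z_t)$. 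I expect this last bookkeeping to be the only delicate point: one must invoke ``strongly reduced'' only for subsets already known to be irreducible components, and must verify that the summands of a generic point of a direct-sum component are themselves generic, so that the generic values of $c_\LL$, $E_\LL$ and $\gExt_\LL^1$ really split as the claimed finite sums. Everything else is formal from the additivity and upper-semicontinuity statements established earlier in this section.
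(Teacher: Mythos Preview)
Your proposal is correct and follows essentially the same route as the paper: reduce to $t=2$, then combine the additivity identities $c_\LL(Z)=c_\LL(Z_1)+c_\LL(Z_2)$ (Lemma~\ref{cadditive}) and $E_\LL(Z)=E_\LL(Z_1)+E_\LL(Z_2)+E_\LL(Z_1,Z_2)+E_\LL(Z_2,Z_1)$ (Lemma~\ref{dimlemma1}(ii)) with the inequalities of Lemma~\ref{lemmaineq} and non-negativity of $E_\LL(-,?)$. In the direction (ii)$\Rightarrow$(i) you are in fact slightly more careful than the paper: you explicitly invoke Theorem~\ref{decomp1} (via $\gExt_\LL^1(Z_i,Z_j)\le E_\LL(Z_i,Z_j)=0$) to ensure that $\overline{Z_1\oplus Z_2}$ is an irreducible component before applying Lemmas~\ref{cadditive} and \ref{dimlemma1}, both of which are stated only for $Z\in\decirr(\LL)$; the paper leaves this implicit.
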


\begin{proof}
Without loss of generality assume that $t =2$.
The general case follows by induction.
Let $Z_1 \in \decirr_{\bd_1,\bv_1}(\LL)$ and 
$Z_2 \in \decirr_{\bd_2,\bv_2}(\LL)$.

Assume that $Z := \overline{Z_1 \oplus Z_2}$ is a strongly
reduced component.
Thus we have $c_\LL(Z) = E_\LL(Z)$.
Applying Lemma~\ref{cadditive} and
Lemma~\ref{dimlemma1}(ii) this implies
$$
c_\LL(Z_1)+c_\LL(Z_2) =
E_\LL(Z_1) + E_\LL(Z_2) + E_\LL(Z_1,Z_2) + E_\LL(Z_2,Z_1).
$$
Since $c_\LL(Z_i) \le E_\LL(Z_i)$ we get $E_\LL(Z_1,Z_2) = E_\LL(Z_2,Z_1) = 0$
and $c_\LL(Z_i) = E_\LL(Z_i)$.
Thus (i) implies (ii).

To show the converse,
assume that $Z_1$ and $Z_2$ are strongly reduced with
$E_\LL(Z_1,Z_2) = E_\LL(Z_2,Z_1) = 0$.
We claim that
$$
c_\LL(Z) = c_\LL(Z_1) + c_\LL(Z_2) = E_\LL(Z_1) + E_\LL(Z_2) = E_\LL(Z).
$$
For the first equality we use Lemma~\ref{cadditive},
the second equality is just our assumption that $Z_1$ and $Z_2$ are strongly reduced.
Finally, the third equality follows from Lemma~\ref{dimlemma1} together with our assumption
that $E_\LL(Z_1,Z_2)$ and $E_\LL(Z_2,Z_1)$ are both zero.
Thus $Z$ is strongly reduced.
\end{proof}

Note that Theorems~\ref{decomp1} and \ref{decomp2}
imply that each  $Z \in \decirr^\sr(\LL)$
is of the form $Z = \overline{Z_1 \oplus \cdots \oplus Z_t}$
with $Z_i \in \decirr^\sr(\LL)$ and $Z_i$ indecomposable for all $i$.

\subsection{Generic $g$-vectors}
For $Z \in \decirr(\LL)$ 
there is a dense open subset $U$ of $Z$ such that
$g_\LL(\cM) = g_\LL(\cN)$ for all $\cM,\cN \in U$.
This follows again by upper semicontinuity.
For $\cM \in U$ let
$$
g_\LL(Z) := g_\LL(\cM)
$$ 
be the \emph{generic $g$-vector} of $Z$.

The next lemma follows directly 
from upper semicontinuity and
Lemma~\ref{additive}(i).

\begin{Lem}\label{gadditive}
For
$Z,Z_1,Z_2 \in \decirr(\LL)$ with
$Z = \overline{Z_1 \oplus Z_2}$
we have
$$
g_\LL(Z) = g_\LL(Z_1) + g_\LL(Z_2).
$$
\end{Lem}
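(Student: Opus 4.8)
The plan is to deduce Lemma~\ref{gadditive} from the additivity of $g$-vectors on direct sums (Lemma~\ref{additive}(i)) together with the genericity of the three quantities involved and a standard openness argument. First I would recall that by definition of $g_\LL(Z)$, $g_\LL(Z_1)$ and $g_\LL(Z_2)$, there exist dense open subsets $U\subseteq Z$, $U_1\subseteq Z_1$, $U_2\subseteq Z_2$ on which $g_\LL$ is constant and equal to $g_\LL(Z)$, $g_\LL(Z_1)$, $g_\LL(Z_2)$ respectively; this is the upper-semicontinuity statement quoted just before the lemma.

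Next I would pass to the image of the direct sum. Consider the map $f\colon G_\bd\times Z_1\times Z_2\to Z$ sending $(g,\cM_1,\cM_2)$ to $g.(\cM_1\oplus\cM_2)$, as in the proof of Lemma~\ref{dimlemma3}; its image is dense in $Z=\overline{Z_1\oplus Z_2}$. In particular the image of $f$ is a dense constructible subset of $Z$, hence meets the dense open set $U$. Now pick a point in the image of $f$ that lies in $U$ and whose preimage can be taken with $\cM_1\in U_1$, $\cM_2\in U_2$: concretely, $f(G_\bd\times U_1\times U_2)$ is a constructible set whose closure is all of $Z$ (since $U_1$, $U_2$ are dense in $Z_1$, $Z_2$), so $f(G_\bd\times U_1\times U_2)\cap U\neq\varnothing$. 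Choose $\cM=g.(\cM_1\oplus\cM_2)$ in this intersection with $\cM_1\in U_1$, $\cM_2\in U_2$.

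For such a point we compute both sides. Since $\cM\in U$ we have $g_\LL(Z)=g_\LL(\cM)$, and since $g_\LL$ is invariant under the $G_\bd$-action (the $g$-vector depends only on the isomorphism class of the decorated representation), $g_\LL(\cM)=g_\LL(\cM_1\oplus\cM_2)$. By Lemma~\ref{additive}(i), $g_\LL(\cM_1\oplus\cM_2)=g_\LL(\cM_1)+g_\LL(\cM_2)$. Finally, since $\cM_1\in U_1$ and $\cM_2\in U_2$, we have $g_\LL(\cM_1)=g_\LL(Z_1)$ and $g_\LL(\cM_2)=g_\LL(Z_2)$. Stringing these equalities together gives $g_\LL(Z)=g_\LL(Z_1)+g_\LL(Z_2)$, as desired.

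The argument is essentially formal once the openness inputs are in place, so I do not expect a serious obstacle; the only point requiring a little care is verifying that the generic locus $U$ of $Z$ genuinely meets the image of generic points $(\cM_1,\cM_2)\in U_1\times U_2$ under $f$ — i.e. that one can simultaneously achieve genericity of the summands and of the sum. This follows because $f$ is dominant and $U_1,U_2$ are dense, so $\overline{f(G_\bd\times U_1\times U_2)}=Z$ and hence $f(G_\bd\times U_1\times U_2)$ (being constructible and dense) meets the nonempty open set $U$. Alternatively, one can invoke Lemma~\ref{open}-style reasoning directly on the product $Z_1\times Z_2$ to see that the locus where all three $g$-vectors take their generic values is dense. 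Either way the proof is short.
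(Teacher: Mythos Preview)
Your proof is correct and is essentially the same approach as the paper's, which simply states that the lemma follows directly from upper semicontinuity and Lemma~\ref{additive}(i). You have unpacked in detail exactly the argument the authors leave implicit: use the dense open loci where the $g$-vectors are generic, use dominance of the direct-sum map to find a common generic point, and apply additivity of $g$-vectors on direct sums.
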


\begin{Lem}
For $Z \in \decirr_{\bd,\bv}^\sr(\LL)$ we have
$$
\bd \cdot g_\LL(Z) = \dim(Z) - \dim(G_\bd).
$$
\end{Lem}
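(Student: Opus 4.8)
The plan is to reduce the statement to the elementary identity
$$
E_\LL(\cM) = \dim \End_\LL(M) + \bd \cdot g_\LL(\cM)
\qquad\text{for all } \cM = (M,V) \in \decrep_{\bd,\bv}(\LL),
$$
which is immediate from the definition $E_\LL(\cM,\cN) = \dim\Hom_\LL(M,N) + \sum_{i=1}^n \dim(M_i)g_i(\cN)$ together with $\dimv(M) = \bd$. First I would pick a dense open subset $U \subseteq Z$ on which both $\cM \mapsto g_\LL(\cM)$ is constant equal to $g_\LL(Z)$ and $\cM = (M,V) \mapsto \dim\End_\LL(M)$ is constant equal to $\gEnd_\LL(Z)$; such a $U$ exists by upper semicontinuity, being the intersection of the dense open set defining the generic $g$-vector with the dense open set of Lemma~\ref{open}(i).

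On $U$ the displayed identity gives $E_\LL(\cM) = \gEnd_\LL(Z) + \bd\cdot g_\LL(Z)$, which is a constant. By Lemma~\ref{open}(iii) the value $E_\LL(Z)$ is attained on a dense constructible subset of $Z$; such a set meets the nonempty open set $U$, and at a point of the intersection the two expressions for $E_\LL(\cM)$ must agree. Hence
$$
E_\LL(Z) = \gEnd_\LL(Z) + \bd \cdot g_\LL(Z).
$$

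Finally, since $Z$ is strongly reduced we have $c_\LL(Z) = E_\LL(Z)$, and for any $Z \in \decirr_{\bd,\bv}(\LL)$ one has $c_\LL(Z) = \dim(Z) - \dim(G_\bd) + \gEnd_\LL(Z)$, as recorded after Equation~(\ref{orbitdim}). Combining the last three relations,
$$
\bd \cdot g_\LL(Z) = E_\LL(Z) - \gEnd_\LL(Z) = c_\LL(Z) - \gEnd_\LL(Z) = \dim(Z) - \dim(G_\bd),
$$
which is the claim. I do not expect a serious obstacle here; the only point needing a little care is the passage from ``$E_\LL$ is constant on the dense open $U$'' to ``$E_\LL(Z)$ equals that constant'', which is exactly where the density of the locus where $E_\LL$ attains its generic value is used.
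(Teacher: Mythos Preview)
Your proof is correct and follows exactly the same route as the paper: establish $E_\LL(Z) = \gEnd_\LL(Z) + \bd\cdot g_\LL(Z)$, combine it with $c_\LL(Z) = \dim(Z) - \dim(G_\bd) + \gEnd_\LL(Z)$, and use $c_\LL(Z) = E_\LL(Z)$. The paper simply asserts the first identity ``follows from the definitions'', whereas you spell out the genericity argument; your extra care about matching the constant value on $U$ with $E_\LL(Z)$ is fine but not really needed, since $E_\LL(Z)$ is itself defined as the value on a dense open set, and two dense opens in an irreducible variety intersect.
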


\begin{proof}
It follows from the definitions that
$$
E_\LL(Z) = \gEnd_\LL(Z) + \bd \cdot g_\LL(Z),
$$ 
and we have
$$
c_\LL(Z) = 
\dim(Z) - \dim(G_\bd) + \gEnd_\LL(Z).
$$
Now the claim follows, since $c_\LL(Z) = E_\LL(Z)$.
\end{proof}

\begin{Cor}
Let $Z \in \decirr_{\bd,\bv}^\sr(\LL)$ with $\bd \not= 0$.
If $E_\LL(Z) = 0$, then
$$
\bd \cdot g_\LL(Z) = - \gEnd_\LL(Z) < 0.
$$
\end{Cor}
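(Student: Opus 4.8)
The plan is to extract the asserted equality from the identities already established in the proof of the preceding lemma, and then supply the missing strict inequality by an elementary observation. First I would recall that for \emph{any} $Z \in \decirr(\LL)$ with $\bd := \dimv(\pi Z)$ one has
$$
E_\LL(Z) = \gEnd_\LL(Z) + \bd \cdot g_\LL(Z).
$$
Indeed, for a decorated representation $\cM = (M,V)$ the definition in Section~\ref{sec3.1} gives $E_\LL(\cM) = \dim\Hom_\LL(M,M) + \sum_{i=1}^n \dim(M_i)\, g_i(\cM) = \dim\End_\LL(M) + \bd \cdot g_\LL(\cM)$, and passing to a generic $\cM$ lying in the intersection of the dense open (resp. constructible) subsets of $Z$ on which $\dim\End_\LL(M)$, $g_\LL(\cM)$ and $E_\LL(\cM)$ simultaneously attain their generic values $\gEnd_\LL(Z)$, $g_\LL(Z)$ and $E_\LL(Z)$ — such subsets exist by Lemma~\ref{open} — yields the displayed identity.

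Granting the hypothesis $E_\LL(Z) = 0$, this identity immediately gives $\bd \cdot g_\LL(Z) = -\gEnd_\LL(Z)$, which is the asserted equation. It then remains only to check that $\gEnd_\LL(Z) > 0$, and here I would use the hypothesis $\bd \not= 0$: for the generic $\cM = (M,V) \in Z$ the underlying module $M$ is non-zero, so its identity endomorphism $\id_M$ is a non-zero element of $\End_\LL(M)$; hence $\dim\End_\LL(M) \ge 1$, and therefore $\gEnd_\LL(Z) \ge 1 > 0$. Combining the two, $\bd \cdot g_\LL(Z) = -\gEnd_\LL(Z) < 0$, as required.

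I do not anticipate any genuine obstacle. The only point deserving a word of care is that the identity $E_\LL(Z) = \gEnd_\LL(Z) + \bd \cdot g_\LL(Z)$ is an equality of \emph{generic} values, so one must evaluate all three invariants at one and the same generic point of $Z$ — exactly the routine intersection-of-dense-opens argument used repeatedly in this section through Lemma~\ref{open}. Everything else follows immediately from the definitions of $E_\LL$, $g_\LL$ and $\gEnd_\LL$.
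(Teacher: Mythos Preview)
Your proof is correct and follows essentially the same route as the paper: the identity $E_\LL(Z) = \gEnd_\LL(Z) + \bd \cdot g_\LL(Z)$ is exactly what is recorded in the proof of the preceding lemma, and the corollary then follows immediately by setting $E_\LL(Z)=0$ and observing that $\gEnd_\LL(Z)\ge 1$ when $\bd\neq 0$. Your remark that this identity holds for \emph{any} $Z\in\decirr(\LL)$, not just strongly reduced ones, is correct and worth noting --- the hypothesis $Z\in\decirr^\sr(\LL)$ is in fact redundant here, since $E_\LL(Z)=0$ already forces $c_\LL(Z)=e_\LL(Z)=E_\LL(Z)=0$ by Lemma~\ref{lemmaineq}.
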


\subsection{Parametrization of strongly
reduced components}
Let $\LL = \CQ{Q}/I$ be a finite-dimensional basic algebra.
Plamondon \cite{P2} constructs a map
$$
P_\LL\colon \decirr(\LL) \to \Z^n,
$$
which can be defined as follows:
Let $Z \in \decirr(\LL)$.
Then there exist injective $\LL$-modules
$I_0^\LL(Z)$ and $I_1^\LL(Z)$, which are uniquely determined
up to isomorphism, and a dense open subset $U \subseteq \pi Z$
such that for each representation $M \in U$ we have
$I_0^\LL(M) = I_0^\LL(Z)$ and $I_1^\LL(M) = I_1^\LL(Z)$.
For $Z \in \decirr_{\bd,\bv}(\LL)$ define
$$
P_\LL(Z) := 
-\dimv(\soc(I_0^\LL(Z))) + \dimv(\soc(I_1^\LL(Z)))
+ \bv.
$$
Let 
$$
P_\LL^\sr\df \decirr^\sr(\LL) \to \Z^n
$$
be the restriction of $P_\LL$ to $\decirr^\sr(\LL)$.

For a representation $M$ let $\add(M)$ be the category
of all finite direct sums of direct summands of $M$.
Plamondon \cite{P2} 
obtains the following striking result.

\begin{Thm}[Plamondon]\label{plamondon1}
For any finite-dimensional basic algebra $\LL$
the following hold:
\begin{itemize}

\item[(i)]
$$
P_\LL^\sr\df \decirr^\sr(\LL) \to \Z^n
$$ 
is bijective.

\item[(ii)]
For every 
$Z \in \decirr^\sr(\LL)$ 
we have
$$
\add(I_0^\LL(Z)) \cap \add(I_1^\LL(Z)) = 0.
$$ 

\end{itemize}
\end{Thm}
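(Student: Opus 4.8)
The first step is to recognize that $P_\LL$ is the generic $g$-vector. Applying Lemma~\ref{HomologicalE1} to a module generic in a component, together with Lemma~\ref{lemma3.3} (for finite-dimensional $\LL$ one has $\LL_p=\LL$ once $p$ is large), one gets $P_\LL(Z)=g_\LL(Z)$ for every $Z\in\decirr^\sr(\LL)$. So the statement is equivalent to: the generic $g$-vector defines a bijection $\decirr^\sr(\LL)\to\Z^n$, and for each such $Z$ the generic minimal injective copresentation of the module part has no common direct summand. By the decomposition results (Theorems~\ref{decomp1},~\ref{decomp2}) and additivity of $g_\LL$ (Lemma~\ref{gadditive}) one is tempted to reduce to indecomposable components, but this is not enough for injectivity, so the real input must come from a more rigid structure. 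The plan is: prove surjectivity by an explicit generic construction; prove injectivity by lifting to the cluster category; and then deduce (ii).

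\emph{Surjectivity.} Given $g\in\Z^n$, write $g=g^+-g^-$ with $g^\pm\in\N^n$ of disjoint support, and set $I^0:=\bigoplus_i I_i^{\,g^-_i}$ and $I^1:=\bigoplus_i I_i^{\,g^+_i}$, so that $\add I^0\cap\add I^1=0$ by construction. For a \emph{generic} $h\in\Hom_\LL(I^0,I^1)$ put $M_h:=\Ker h$, and let $V_h$ be the decoration recording those indecomposable summands of $I^1$ not occurring in $\soc I_1^\LL(M_h)$; by Corollary~\ref{cor5.5} this decoration is vertex-disjoint from $\soc M_h$. Let $Z_g:=\overline{\bigcup_h\cO(M_h,V_h)}$ inside the relevant $\decrep_{\bd,\bv}(\LL)$. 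One then has to check that the dimension vector and $g_\LL$ are generically constant on $Z_g$ with $g_\LL(Z_g)=g$; that $Z_g$ is an irreducible component (a dimension count using Voigt's Lemma \cite[Proposition~1.1]{G} and Lemma~\ref{dimlemma3}); and that $Z_g$ is strongly reduced, i.e.\ $c_\LL(Z_g)=e_\LL(Z_g)=E_\LL(Z_g)$ — by Proposition~\ref{HomologicalE2} and the Auslander--Reiten formula this reduces to showing that for a generic kernel $M$ no nonzero map $\tau_\LL^-M\to M$ factors through an injective, and that the orbit of $(M,V)$ has the expected codimension. On the part of $\Z^n$ lying in the $g$-vector fan, $Z_g$ is the orbit closure of a $\tau$-rigid pair and this is classical; the difficult case is the ``imaginary'' region $E_\LL(Z_g)>0$, the smallest instance being the regular component of dimension vector $(1,1)$ for the Kronecker quiver.

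\emph{Injectivity.} Here I would follow \cite{P1,P2} and pass to the cluster category $\cC_\LL$ of $\LL$ (allowing infinite-dimensional morphism spaces), with cluster-tilting object $T$ satisfying $\End_{\cC_\LL}(T)\cong\LL$. Via the standard dictionary, a decorated representation $\cM=(M,V)$ corresponds to an object $X_\cM$ of $\cC_\LL$ (shifted copies of summands of $T$ encoding $V$), under which $g_\LL(\cM)$ is the index $\operatorname{ind}_T(X_\cM)$ and $E_\LL(\cM)=\dim\Hom_{\cC_\LL}(X_\cM,X_\cM[1])$ whenever $\cM$ is generic in a strongly reduced component. One shows that $Z\mapsto X_{\cM_Z}$, evaluated at a generic point $\cM_Z$, is well defined and injective on $\decirr^\sr(\LL)$ — two strongly reduced components with isomorphic generic objects coincide — and that strong reducedness forces the defining triangle $T_1\to T_0\to X_{\cM_Z}\to T_1[1]$ to be \emph{reduced}, $\add T_0\cap\add T_1=0$, which is exactly what lets the index separate the generic objects that arise. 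Translating this reducedness back through the dictionary gives $\add I_0^\LL(Z)\cap\add I_1^\LL(Z)=0$, i.e.\ part (ii); alternatively, once (i) is established, (ii) follows because every $Z\in\decirr^\sr(\LL)$ equals some $Z_g$, which satisfies (ii) by construction.

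The main obstacle, I expect, is everything surrounding the imaginary components: showing that the explicit $Z_g$ for $g$ outside the $g$-vector fan is genuinely an irreducible component and is strongly reduced, and showing that the generic $g$-vector still separates these (non-rigid) components. This is precisely the part that is not covered by the Derksen--Weyman--Zelevinsky rigid theory \cite{DWZ1,DWZ2} or the classical support $\tau$-tilting bijection, and it is why a direct argument inside $\rep(\LL)$ is delicate; the point of passing to $\cC_\LL$ as in \cite{P1,P2} is that there $E_\LL$ becomes a symmetric ($2$-Calabi--Yau) pairing and the index is additive on triangles, which makes the bookkeeping manageable.
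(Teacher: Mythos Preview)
The paper does not prove this theorem; it is quoted as Plamondon's result \cite[Theorem~1.2]{P2} and used as a black box (the only remark is that Plamondon's formulation in terms of irreducible components translates easily to decorated components). So there is no ``paper's own proof'' to compare against, and your sketch is in effect an attempt to reconstruct Plamondon's argument.

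Your overall plan --- identify $P_\LL$ with the generic $g$-vector, construct for each $g\in\Z^n$ a component via generic kernels of maps between injectives, and prove injectivity by lifting to the cluster category where the index separates objects with reduced presentations --- is indeed the shape of Plamondon's approach in \cite{P1,P2}. Two points deserve caution. First, the theorem is stated for an \emph{arbitrary} finite-dimensional basic algebra $\LL$, and you invoke ``the cluster category $\cC_\LL$ of $\LL$'' without saying what this means outside the Jacobian/global-dimension-$\le 2$ setting; Plamondon's machinery does handle this, but it is not a triviality and your sketch hides it. Second, your surjectivity step is loose: for generic $h\colon I^0\to I^1$ it is not automatic that $0\to\Ker h\to I^0\to I^1$ is a \emph{minimal} injective presentation, your definition of the decoration $V_h$ is imprecise, and the claim that the resulting $Z_g$ is an irreducible component (let alone strongly reduced) is exactly the hard part --- a dimension count alone does not settle it, and your appeal to Voigt's Lemma and Lemma~\ref{dimlemma3} does not add up to a proof. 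You correctly flag the ``imaginary'' region as the crux, but you do not actually indicate how to handle it beyond saying the $2$-Calabi--Yau symmetry helps; that is where the real content of \cite{P2} lies.
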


Note that Plamondon works with irreducible components,
and not with decorated irreducible components.
But his results translate easily from one concept to the other.

We now generalize Theorem~\ref{plamondon1}(i) to arbitrary
basic algebras $\LL$.
It turns out that $\decirr^\sr(\LL)$ is in general no longer
parametrized by $\Z^n$ but by a subset of $\Z^n$.
Our proof is based on Plamondon's result and uses 
additionally truncations of basic algebras.

For a basic algebra $\LL$
let 
$$
G_\LL\df \decirr(\LL) \to \Z^n
$$
be the map, which sends $Z \in \decirr(\LL)$
to the generic $g$-vector $g_\LL(Z)$ of $Z$.

For finite-dimensional $\LL$, it follows immediately from Lemma~\ref{HomologicalE1}
that $G_\LL = P_\LL$.
Let 
$$
G_\LL^\sr\df \decirr^\sr(\LL) \to \Z^n
$$
be the restriction of $G_\LL$ to $\decirr^\sr(\LL)$.

For a basic algebra $\LL$
let 
$$
\decirr_{<p}(\LL)
$$  
be the set of irreducible components $Z \in \decirr(\LL)$ such that $(\bd,\bv) := \dimv(Z)$ satisfies
$|\bd| < p$.  
Define 
$$
\decirr_{<p}^\sr(\LL) := \decirr_{<p}(\LL) \cap 
\decirr^\sr(\LL).
$$

\begin{Lem}\label{reduction}
For a basic algebra $\LL = \CQ{Q}/I$ the following hold:
\begin{itemize}

\item[(i)]
For all $p \le q$ we have
$$
\decirr_{<p}^\sr(\LL_p) \subseteq  \decirr_{<q}^\sr(\LL_q)
\subseteq  \decirr^\sr(\LL).
$$

\item[(ii)]
We have
$$
\decirr^\sr(\LL) =
\bigcup_{p > 0} \decirr_{<p}^\sr(\LL_p).
$$

\item[(iii)]
We have 
$$
\decirr^\sr(\overline{\LL}) = \decirr^\sr(\LL).
$$

\end{itemize}
\end{Lem}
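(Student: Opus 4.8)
The plan is to reduce the whole statement to the single observation, already packaged in Lemma~\ref{trunclemma}(iv) and Lemma~\ref{truncsr}, that passing from a basic algebra to its $p$-truncation changes nothing in the part of the representation variety of total representation-dimension below $p$. For part (iii) I also use the elementary identity $(\overline{\LL})_p = \LL_p$ for all $p \ge 2$: since $\overline{I} = \bigcap_{r \ge 0}(I+\m^r)$ satisfies $I \subseteq \overline{I} \subseteq I+\m^p$, one gets $\overline{I}+\m^p = I+\m^p$, so $\CQ{Q}/(\overline{I}+\m^p) = \LL_p$. In particular $\overline{\LL}$ is again a basic algebra (with admissible ideal $\overline{I} \subseteq \m^2$) and has the same truncations as $\LL$.

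The core step is the following claim: \emph{for any basic algebra $\LL' = \CQ{Q}/I'$ and any $p > 0$ one has $\decirr_{<p}^\sr(\LL'_p) = \decirr_{<p}^\sr(\LL')$.} To see this, take $Z$ with $\dimv(Z) = (\bd,\bv)$ and $|\bd| < p$. By Lemma~\ref{trunclemma}(iv) the affine varieties $\decrep_{\bd,\bv}(\LL'_p)$ and $\decrep_{\bd,\bv}(\LL')$ coincide, hence they have the same irreducible components; and by Lemma~\ref{truncsr} (applicable since $p > |\bd|$) such a $Z$ is strongly reduced over $\LL'$ if and only if it is strongly reduced over $\LL'_p$. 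This claim carries essentially all the content; the remaining arguments are formal.

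Part (i): applying the claim to $\LL$ twice, together with the trivial inclusion $\decirr_{<p}(\LL) \subseteq \decirr_{<q}(\LL)$ for $p \le q$, gives
$$
\decirr_{<p}^\sr(\LL_p) = \decirr_{<p}^\sr(\LL) \subseteq \decirr_{<q}^\sr(\LL) = \decirr_{<q}^\sr(\LL_q) \subseteq \decirr^\sr(\LL),
$$
the last inclusion being the definition of $\decirr_{<q}^\sr(\LL)$. Part (ii): the inclusion $\supseteq$ is contained in part (i); for $\subseteq$, given $Z \in \decirr^\sr(\LL)$ with $\dimv(Z) = (\bd,\bv)$, choose any $p > |\bd|$, so that $Z \in \decirr_{<p}^\sr(\LL) = \decirr_{<p}^\sr(\LL_p)$ by the claim. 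Part (iii): $\overline{\LL}$ is a basic algebra, so part (ii) applies to it, and using $(\overline{\LL})_p = \LL_p$ we obtain
$$
\decirr^\sr(\overline{\LL}) = \bigcup_{p>0} \decirr_{<p}^\sr\bigl((\overline{\LL})_p\bigr) = \bigcup_{p>0}\decirr_{<p}^\sr(\LL_p) = \decirr^\sr(\LL).
$$
The only genuinely delicate points — and the nearest thing to an obstacle — are checking the ideal identity $(\overline{\LL})_p = \LL_p$ and keeping straight which ambient basic algebra each invocation of Lemma~\ref{trunclemma} and Lemma~\ref{truncsr} refers to; no further homological input is required.
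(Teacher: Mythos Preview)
Your proof is correct and follows essentially the same route as the paper's own proof: the paper likewise reduces (i) and (ii) to the observation that for $p > |\bd|$ one has $\decrep_{\bd,\bv}(\LL_p) = \decrep_{\bd,\bv}(\LL)$ together with $c_{\LL_p}(Z) = c_\LL(Z)$ and $E_{\LL_p}(Z) = E_\LL(Z)$ (i.e.\ the content of Lemmas~\ref{trunclemma}(iv) and~\ref{truncsr}), and derives (iii) from (ii) via the identity $\overline{I}+\m^p = I+\m^p$. Your packaging of the key step as the single claim $\decirr_{<p}^\sr(\LL'_p) = \decirr_{<p}^\sr(\LL')$ is slightly more explicit than the paper's phrasing, but the argument is the same.
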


\begin{proof}
Let $Z \in \decirr_{\bd,\bv}(\LL)$, and let
$p > |\bd|$.
Thus we have $Z \in \decirr_{\bd,\bv}(\LL_p)$ and 
$Z \in \decirr_{<p}(\LL_p)$.
Furthermore, we have $c_{\LL_p}(Z) = c_\LL(Z)$ and
$E_{\LL_p}(Z) = E_\LL(Z)$.
Thus $Z \in \decirr^\sr(\LL)$ if and only if $Z \in \decirr^\sr(\LL_p)$.
This yields (i) and (ii).
Recall that 
$$
\overline{I} = \bigcap_{p \ge 0} (I+\m^p)
$$
and $\LL_p = \CQ{Q}/(I+\m^p)$.
For $p \ge 2$ it is easy to check that
$$
\overline{I} + \m^p = I + \m^p.
$$
This implies $\overline{\LL}_p = \LL_p$.
Now (ii) implies (iii).
\end{proof}

\begin{Thm}\label{parametrization}
For a basic algebra $\LL$ the following hold:
\begin{itemize}

\item[(i)]
The map
$$
G_\LL^\sr\df \decirr^\sr(\LL) \to \Z^n
$$
is injective.

\item[(ii)]
The following are equivalent:
\begin{itemize}

\item[(a)]
$G_\LL^\sr$ is surjective.

\item[(b)]
$\overline{\LL}$ is finite-dimensional.

\end{itemize}

\end{itemize}
\end{Thm}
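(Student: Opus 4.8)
The strategy is to deduce everything from Plamondon's theorem (Theorem~\ref{plamondon1}) applied to the finite-dimensional truncations $\LL_p$, together with the compatibility results already established: Lemma~\ref{truncsr} (a component is strongly reduced for $\LL$ iff it is so for $\LL_p$ when $p>|\bd|$), Lemma~\ref{lemma3.3} and Proposition~\ref{HomologicalE2} (the generic $g$-vector is computed inside any sufficiently large truncation), Lemma~\ref{reduction}(ii) (every strongly reduced component of $\LL$ appears in some $\decirr^\sr_{<p}(\LL_p)$), and the identity $G_{\LL_p}=P_{\LL_p}$ for finite-dimensional algebras.

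First I would prove injectivity of $G_\LL^\sr$. Take $Z, Z' \in \decirr^\sr(\LL)$ with $g_\LL(Z)=g_\LL(Z')$. By Lemma~\ref{reduction}(ii) choose $p$ large enough that both $Z$ and $Z'$ lie in $\decirr^\sr_{<p}(\LL_p)$; enlarging $p$ further (using $p>|\bd|,|\bd'|$) we may also assume, by Proposition~\ref{HomologicalE2} and Lemma~\ref{lemma3.3}, that $g_{\LL_p}(Z)=g_\LL(Z)$ and $g_{\LL_p}(Z')=g_\LL(Z')$. Then $G_{\LL_p}^\sr(Z)=P_{\LL_p}^\sr(Z)=P_{\LL_p}^\sr(Z')=G_{\LL_p}^\sr(Z')$, and since $P_{\LL_p}^\sr$ is injective by Theorem~\ref{plamondon1}(i), we get $Z=Z'$. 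This gives part~(i).

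For part~(ii), the implication (b)$\implies$(a) should go as follows: if $\overline{\LL}$ is finite-dimensional, then $\overline{\LL}=\LL_p$ for all sufficiently large $p$ (since $\overline{I}+\m^p=I+\m^p$ and the descending chain of the $\C Q_m$-components of $\overline I$ stabilizes), so $\decirr^\sr(\LL)=\decirr^\sr(\overline\LL)=\decirr^\sr(\LL_p)$ by Lemma~\ref{reduction}(iii), and $G_\LL^\sr=G_{\overline\LL}^\sr=G_{\LL_p}^\sr=P_{\LL_p}^\sr$ is surjective by Theorem~\ref{plamondon1}(i). For the contrapositive of (a)$\implies$(b): suppose $\overline\LL$ is infinite-dimensional. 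Then some indecomposable injective $\overline I_i$ is infinite-dimensional (equivalently $\dim\Hom_{\overline\LL}(-,\overline I_i)$, i.e. the $i$-th composition-multiplicity function, is unbounded on $\rep(\overline\LL)$). Using the chain $I_{i,2}\subseteq I_{i,3}\subseteq\cdots$ with $\overline I_i=\bigcup_p I_{i,p}$, the soc/top data of the truncated injectives $I_{i,p}$ grow without bound, and one shows that the would-be $g$-vector $-e_i$ — or more precisely some explicit vector that could only be hit by a component built from a ``completed injective'' that does not exist for $\LL$ — is not in the image of $G_\LL^\sr$. Concretely: the negative simple $\cS_i^-$ has $g$-vector $e_i$, so $e_i\in\Ima(G_\LL^\sr)$; the obstruction is with vectors like $-e_i$: a strongly reduced component $Z$ with $g_\LL(Z)=-e_i$ would, inside each truncation $\LL_p$, correspond via Plamondon to the orbit closure of the injective envelope $I_{i,p}$ of $S_i$ (since $P_{\LL_p}(\overline{\cO(I_{i,p},0)})=-\dimv(\soc I_{i,p})=-e_i$), but these orbit closures do not stabilize to a single component of fixed dimension vector as $p\to\infty$ precisely because $\overline I_i$ is infinite-dimensional; hence no $Z\in\decirr^\sr(\LL)$ has $g$-vector $-e_i$.

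\textbf{Main obstacle.} The delicate direction is (a)$\implies$(b) (equivalently, showing non-surjectivity when $\overline\LL$ is infinite-dimensional). One must pin down which $g$-vector fails to be attained and argue that attaining it would force the existence of a finite-dimensional module playing the role of $\overline I_i$; this requires carefully tracking, via Plamondon's description $P_{\LL_p}(Z)=-\dimv(\soc I_0^{\LL_p}(Z))+\dimv(\soc I_1^{\LL_p}(Z))+\bv$, how the parametrizing data of a putative component $Z$ behave across the tower $(\LL_p)$, and showing that boundedness of $|\dimv(\pi Z)|$ (forced by $Z$ being a single component) is incompatible with the unboundedness of $\dim I_{i,p}$. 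The rest of the argument is a routine assembly of the truncation lemmas already proved.
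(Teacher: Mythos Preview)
Your proposal is correct and follows essentially the same route as the paper: reduce to Plamondon's bijectivity on the finite-dimensional truncations $\LL_p$ via Lemma~\ref{reduction}, and for the non-surjectivity direction show that $-\be_i$ cannot be attained when $\overline{I}_i$ is infinite-dimensional. The only tactical difference is in the endgame of (a)$\Rightarrow$(b): you identify the putative $Z$ with $\overline{\cO(I_{i,p},0)}$ by \emph{uniqueness} (injectivity of $P_{\LL_p}^\sr$, having first checked that this orbit closure is strongly reduced with the right $g$-vector) and then invoke unboundedness of $\dim I_{i,p}$, whereas the paper instead uses Theorem~\ref{plamondon1}(ii) to read off $I_0^{\LL_p}(Z)=I_{i,p}$, $I_1^{\LL_p}(Z)=0$ directly and proves the explicit bound $\dim(I_{i,p})\ge p$ to obtain the contradiction from a single $p$.
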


\begin{proof}
Since $\LL_p$ is finite-dimensional for all $p$, we
know from Plamondon's Theorem~\ref{plamondon1}(i) that 
$$
G_{\LL_p}^\sr\colon \decirr^\sr(\LL_p) \to
\Z^n
$$ 
is bijective. 
Now Lemma~\ref{reduction} yields that
the map
$$
G_\LL^\sr\colon \decirr^\sr(\LL) \to \Z^n
$$
sends
$Z \in \decirr_{<p}^\sr(\LL_p)$ to
$G_{\LL_p}^\sr(Z)$, and that
$G_\LL^\sr$ is injective.
This proves (i).
Theorem \ref{plamondon1}(i) together with Lemma~\ref{reduction}(iii)
says that (b) implies (a).
(Recall that $\decrep(\LL) = \decrep(\overline{\LL})$.
This implies that $G_\LL = G_{\overline{\LL}}$ and
$G_\LL^\sr = G_{\overline{\LL}}^\sr$.
Note also
that for every $Z \in \decirr(\LL)$ we have
$G_\LL(Z) = G_{\LL_p}(Z)$ for some large enough $p$.)

To show the converse, assume that $\overline{\LL}$ is infinite dimensional.
Recall that
$$
\overline{\LL} =
\underleftarrow{\lim}(\LL_p)
$$
and
$$
\overline{I}_i = \bigcup_{p \ge 2} I_{i,p}
$$
where 
$\overline{I}_i$ is the injective envelope of the simple $\overline{\LL}$-module $S_i$, and
$I_{i,p}$ is the injective envelope of $S_i$ considered as
a
$\LL_p$-module, and $\LL_p = \CQ{Q}/(I+\m^p)$.
We have $I_{i,p} = D(e_i\LL_p)$,
where $D = \Hom_K(-,K)$ is the $K$-dual functor.
This implies
$$
\dim(\LL_p) = \sum_{i=1}^n \dim(I_{i,p}).
$$
It follows that there exists
some $1 \le i \le n$ such that $\overline{I}_i$
is infinite dimensional.

As a vector space, $e_i\LL_p$ is generated by the
residue classes $a + (I+\m^p)$ of all paths $a$ in $Q$ with
$t(a) = i$.
We have 
$$
I_{i,2} = D(e_i\LL_2) = D(e_i(\CQ{Q}/(I+\m^2)))
= D(e_i(\CQ{Q}/\m^2)).
$$
This implies $\dim(I_{i,2}) \ge 2$. (Otherwise, there is no
arrow $a$ with $t(a) = i$, which implies $I_{i,p} = I_{i,2}$ for
all $p \ge 2$, a contradiction since $\overline{I}_i$ is infinite
dimensional.)
Now suppose that $I_{i,p-1} = I_{i,p}$ for some $p \ge 3$.
This implies $e_i\LL_{p-1} = e_i\LL_p$.
Thus we have $e_i(I+\m^{p-1}) = e_i(I+\m^p)$.
It follows that
$$
e_i(I+\m^{p+1}) = e_iI+e_i(I+\m^p)\m
= e_iI+e_i(I+\m^{p-1})\m = e_i(I+\m^p). 
$$
In other words, we have $I_{i,p+1} = I_{i,p}$.
By induction we get $I_{i,q} = I_{i,p-1}$ for all
$q \ge p$, a contradiction since $\overline{I}_i$ is infinite
dimensional.
Thus we proved that
$\dim(I_{i,p}) \ge p$ for all $p \ge 2$.

Now assume that $-\be_i$ is in the image of $G_\LL^\sr$.
(Here $\be_i$ denotes the $i$th standard basis vector of
$\Z^n$.)
In other words,
there is some $Z \in \decirr^\sr(\LL)$ such that
$G_\LL^\sr(Z) = -\be_i$.
By Lemma~\ref{reduction}(ii) we know that
$Z \in \decirr_{<p}^\sr(\LL_p)$ for some $p \ge 1$.
Since $g_\LL(Z) = -\be_i$, we have $I_0^{\LL_p}(Z) = I_{i,p}$
and $I_1^{\LL_p}(Z) = 0$.
(Here we use Theorem~\ref{plamondon1}(ii).)
This implies that $Z$ is the closure of the orbit of the decorated
representation $(I_{i,p},0)$.
But $\dim(I_{i,p}) \ge p$ and the dimension
of all representations in $Z$ is strictly smaller than $p$, 
a contradiction.
\end{proof}


\section{Component graphs and $CC$-clusters}\label{sec6}


\subsection{The graph of strongly reduced components}\label{compgraph}
Let $\LL$ be a basic algebra.
In \cite{CBS} the \emph{component graph} $\Gamma(\irr(\LL))$ of $\LL$ is defined as follows:
The vertices of $\Gamma(\irr(\LL))$ are the indecomposable irreducible
components in $\irr(\LL)$.
There is an edge between (possibly equal) vertices $Z_1$ and $Z_2$ if $\gExt_\LL^1(Z_1,Z_2) = \gExt_\LL^1(Z_2,Z_1) = 0$.

We want to define an analogue of $\Gamma(\irr(\LL))$ for 
strongly reduced components.
The graph 
$\Gamma(\decirr^\sr(\LL))$
of strongly reduced components 
has as vertices the indecomposable components
in $\decirr^\sr(\LL)$,
and there is an edge between (possibly equal) vertices
$Z_1$ and $Z_2$ if
$E_\LL(Z_1,Z_2) = E_\LL(Z_2,Z_1) = 0$.

\subsection{Component clusters}
Let $\Gamma$ be a graph, and let  $\Gamma_0$ be the set of vertices of $\Gamma$.
We allow only single edges, and we allow loops, i.e. edges
from a vertex to itself.
For a subset $\cU \subseteq \Gamma_0$ let $\Gamma_\cU$ be the full subgraph,
whose set of vertices is $\cU$. 
The subgraph $\Gamma_\cU$ is
\emph{complete} if for each $i,j \in \cU$ with $i \not= j$
there is an edge between $i$ and $j$.
A complete subgraph $\Gamma_\cU$ is \emph{maximal} if
for each complete subgraph $\Gamma_{\cU'}$ with $\cU \subseteq \cU'$
we have $\cU = \cU'$.
We call a subgraph $\Gamma_\cU$ \emph{loop-complete} if $\Gamma_\cU$ is complete and
there is a  loop at each vertex in $U$.

The set of vertices of a 
maximal complete subgraph of $\Gamma := \Gamma(\decirr^\sr(\LL))$
is called a
\emph{component cluster} of $\LL$.
A component cluster $\cU$ of $\LL$ is 
$E$-\emph{rigid} provided $E_\LL(Z)=0$ 
for all $Z \in \cU$.
(Recall that there is a loop at a vertex $Z$ of
$\Gamma$ if and only if $E_\LL(Z,Z) = 0$.
If $E_\LL(Z) = 0$, then $E_\LL(Z,Z) = 0$, 
but the converse does not hold.)
Clearly, each $E$-rigid component cluster is loop-complete.

\begin{Thm}\label{upperbound}
For each loop-complete subgraph $\Gamma_\cU$ of 
$\Gamma := \Gamma(\decirr^\sr(\LL))$
we have $|\cU| \le n$.
\end{Thm}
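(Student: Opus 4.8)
The plan is to show that a loop-complete subgraph $\Gamma_\cU$ corresponds to a collection of strongly reduced components $\{Z_i\}_{i \in \cU}$ that are pairwise "$E$-orthogonal" (both $E_\LL(Z_i,Z_j)=0$ and $E_\LL(Z_j,Z_i)=0$ for $i\neq j$) and each satisfy $E_\LL(Z_i,Z_i)=0$, and then to convert this into a statement about $g$-vectors to which Theorem~\ref{parametrization}(i) (injectivity of $G_\LL^\sr$) and a sign-coherence/dimension-counting argument applies. The key technical input should be Theorem~\ref{decomp2}: since $E_\LL(Z_i,Z_j)=0$ for all $i\neq j$ and each $Z_i$ is strongly reduced, the closure $Z := \overline{Z_{i_1}\oplus\cdots\oplus Z_{i_m}}$ (where $\cU = \{i_1,\ldots,i_m\}$) is itself a strongly reduced irreducible component. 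So the loop-completeness condition packages a subfamily of $\decirr^\sr(\LL)$ into a single strongly reduced component, and the task becomes: bound the number of indecomposable summands of a strongly reduced component by $n$.

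\emph{First} I would reduce to the finite-dimensional case: by Lemma~\ref{truncsr} (and Lemma~\ref{trunclemma}, Proposition~\ref{HomologicalE2}), choosing $p$ larger than the total dimension vector of $Z$, all the $Z_i$ and their sum $Z$ can be regarded as strongly reduced components of the finite-dimensional truncation $\LL_p$, with $c$ and $E$ unchanged; so WLOG $\LL = \LL_p$ is finite-dimensional. \emph{Then} I would invoke Plamondon's Theorem~\ref{plamondon1}: for each indecomposable $Z_i$ there are injective modules $I_0^\LL(Z_i), I_1^\LL(Z_i)$ with $\add(I_0^\LL(Z_i))\cap\add(I_1^\LL(Z_i))=0$, and by additivity (Lemma~\ref{HomologicalE1}, Lemma~\ref{lemma3.3}) we have $I_0^\LL(Z)=\bigoplus_i I_0^\LL(Z_i)$ and $I_1^\LL(Z)=\bigoplus_i I_1^\LL(Z_i)$, and again $\add(I_0^\LL(Z))\cap\add(I_1^\LL(Z))=0$ since $Z$ is strongly reduced. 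The $g$-vector reads $g_\LL(Z) = -\dimv(\soc(I_0^\LL(Z))) + \dimv(\soc(I_1^\LL(Z))) + \bv$, and the disjointness of $\add(I_0)$ and $\add(I_1)$ means that for each vertex $k$ the injective $I_k$ appears in at most one of $I_0^\LL(Z)$, $I_1^\LL(Z)$.

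\emph{The main obstacle}, and the heart of the argument, is the combinatorial counting: I must show that $m = |\cU|$ indecomposable strongly reduced components that are pairwise $E$-orthogonal and loop-complete cannot exceed $n$. The idea is that each indecomposable $Z_i$ "uses up" at least one of the $n$ injective indecomposables $I_1,\ldots,I_n$ in an injective way. Concretely, for an indecomposable strongly reduced $Z_i$ that is not a negative simple, $I_0^\LL(Z_i)\neq 0$; for a negative simple $\cS_k^-$, the slot is $\bv = \be_k$; in either case one associates to $Z_i$ a nonempty subset $S_i \subseteq \{1,\ldots,n\}$ of "used" injective/negative-simple indices, and I would argue that the loop-complete (hence $E$-orthogonal in both directions) condition forces the $S_i$ to be pairwise disjoint — an overlap $I_k \in \add(I_0^\LL(Z_i))\cap\add(I_0^\LL(Z_j))$ (or the mixed $I_0/I_1$ versions, or two negative simples at the same vertex) would contradict $E_\LL(Z_i,Z_j)=0$ via the homological formula $E_\LL(\cM,\cN) = \dim\Hom_\LL(M,N) - \dim\Hom_\LL(M,I_0^\LL(N)) + \dim\Hom_\LL(M,I_1^\LL(N)) + \sum_i \dim(M_i)\dim(W_i)$ from Proposition~\ref{HomologicalE2} combined with $\dim\Hom_\LL(M,I_k)=\dim(M_k)$ from \eqref{dimMi}; working out these incompatibilities case-by-case (indecomposable non-negative-simple vs. the same, vs. negative simple, two negative simples) is the routine-but-delicate part. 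Since the $S_i$ are nonempty and pairwise disjoint subsets of an $n$-element set, $\sum_i |S_i| \le n$ forces $m \le n$, which is the claim $|\cU|\le n$.
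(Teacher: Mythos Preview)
Your reduction to the finite-dimensional case and your use of Theorem~\ref{decomp2} to see that the $Z_i$ assemble into a single strongly reduced component are correct and match the paper. The gap is in the ``disjoint slots'' step: the claim that $E_\LL(Z_i,Z_j)=0$ forces the index sets $S_i$ (indices of injectives appearing in $I_0^\LL(Z_i)$) to be pairwise disjoint is false. Already for $\LL=\C Q$ with $Q$ the $A_2$ quiver $1\to 2$, the indecomposables $P_1$ (with $\dimv=(1,1)$) and $S_2$ both have socle $S_2$, hence $I_0^\LL(P_1)=I_0^\LL(S_2)=I_2$, so $S_1=S_2=\{2\}$; yet $E_\LL((P_1,0),(S_2,0))=E_\LL((S_2,0),(P_1,0))=0$ and both components have loops, so $\{P_1,S_2\}$ is loop-complete. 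The same phenomenon occurs for any two adjacent preinjective components of the Kronecker quiver in Section~\ref{sec9.4.3}. The formula from Proposition~\ref{HomologicalE2} does not help here: the term $-\dim\Hom_\LL(M,I_0^\LL(N))$ enters with a \emph{negative} sign, so a shared summand in $I_0^\LL(Z_i)$ and $I_0^\LL(Z_j)$ does not push $E_\LL(Z_i,Z_j)$ upward. Plamondon's Theorem~\ref{plamondon1}(ii) only rules out overlap between $I_0^\LL(Z)$ and $I_1^\LL(Z)$, not between $I_0^\LL(Z_i)$ and $I_0^\LL(Z_j)$.

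The paper's argument avoids this combinatorics entirely. It observes that loop-completeness (via Theorem~\ref{decomp2}) makes every $Z_\ba:=\overline{Z_1^{a_1}\oplus\cdots\oplus Z_{n+1}^{a_{n+1}}}$ strongly reduced, with $g_\LL(Z_\ba)=\sum a_i\,g_\LL(Z_i)$ by additivity and $Z_\ba\neq Z_\bb$ for $\ba\neq\bb$ by Theorem~\ref{decomp1}(iii). Since $n+1$ vectors in $\Z^n$ are linearly dependent over $\Q$, one finds $\ba\neq\bb$ in $\N^{n+1}$ with $g_\LL(Z_\ba)=g_\LL(Z_\bb)$, contradicting the injectivity of $G_\LL^\sr$ from Theorem~\ref{parametrization}(i). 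So the bound comes from linear algebra on $g$-vectors, not from a pigeonhole on injective summands.
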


\begin{proof}
Assume that
$Z_1,\ldots,Z_{n+1}$ 
are pairwise different vertices of a loop-complete
subgraph $\Gamma_J$ of $\Gamma(\decirr^\sr(\LL))$.
For $1 \le i \le n+1$ let $g_\LL(Z_i)$ 
be the generic $g$-vector of $Z_i$.
Since $\Gamma_J$ is loop-complete we know by Theorem~\ref{decomp2}
that 
$$
Z_\ba := \overline{Z_1^{a_1} \oplus \cdots \oplus Z_{n+1}^{a_{n+1}}}
$$
is again a strongly reduced component for each
$\ba = (a_1,\ldots,a_{n+1}) \in \N^{n+1}$.
By the additivity of $g$-vectors we get
$$
g_\LL(Z_\ba) = a_1 g_\LL(Z_1) + \cdots + a_{n+1}g_\LL(Z_{n+1}).
$$
Furthermore, we know from Theorem~\ref{decomp1} that $Z_\ba = Z_\bb$
if and only if $\ba = \bb$. 
Now one can essentially copy the proof of 
\cite[Theorem~1.1]{GS} to show that there are
$\ba,\bb \in \N^{n+1}$ with
$g_\LL(Z_\ba) = g_\LL(Z_\bb)$ but $\ba \not= \bb$.
By Theorem~\ref{parametrization} different strongly
reduced components have different $g$-vectors.
Thus we have a contradiction.
\end{proof}

\begin{Cor}
Let $\LL$ be a finite-dimensional basic
algebra.
Let $M$ be a representation of $\LL$ with
$\Hom_\LL(\tau_\LL^-(M),M) = 0$.
Then $M$ has at most $n$ isomorphism classes
of indecomposable direct summands.
\end{Cor}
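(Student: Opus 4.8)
The plan is to realize the distinct indecomposable direct summands of $M$ as the vertices of a loop-complete subgraph of $\Gamma(\decirr^\sr(\LL))$ and then apply Theorem~\ref{upperbound}. Write $M \cong M_1^{m_1} \oplus \cdots \oplus M_t^{m_t}$ with $M_1,\ldots,M_t$ pairwise non-isomorphic indecomposable representations of $\LL$ and $m_i \ge 1$; the goal is to show $t \le n$. Since $\tau_\LL^-$ is additive, the hypothesis $\Hom_\LL(\tau_\LL^-(M),M) = 0$ is equivalent to $\Hom_\LL(\tau_\LL^-(M_i),M_j) = 0$ for all $i,j$. I would then set $\cM_i := (M_i,0) \in \decrep(\LL)$, let $\bd_i := \dimv(M_i)$, and take $Z_i$ to be the Zariski closure of the orbit $\cO(\cM_i)$ inside $\decrep_{\bd_i,0}(\LL)$; since $M_i \not\cong M_j$ for $i \ne j$, the $Z_i$ are pairwise distinct.

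The main work is to show that each $Z_i$ is an indecomposable strongly reduced irreducible component. By the Auslander--Reiten formula $\Ext_\LL^1(M_i,M_i) \cong {\rm D}\underline{\Hom}_\LL(\tau_\LL^-(M_i),M_i)$ used in the proof of Lemma~\ref{lemmaineq}, and since $\underline{\Hom}_\LL$ is a quotient of $\Hom_\LL$, the hypothesis forces $\Ext_\LL^1(M_i,M_i) = 0$, i.e.\ $M_i$ is rigid. Voigt's Lemma \cite[Proposition~1.1]{G} then shows that $\cO(\cM_i)$ is open in $\decrep_{\bd_i,0}(\LL)$, so $Z_i = \overline{\cO(\cM_i)}$ is an irreducible component; it is indecomposable because $\cO(\cM_i)$ is a dense open subset of $Z_i$ consisting of indecomposable decorated representations. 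Moreover $c_\LL(Z_i) = \dim(Z_i) - \dim\cO(\cM_i) = 0$. Since $\LL$ is finite-dimensional we have $\LL_p = \LL$ for $p$ large, so Proposition~\ref{HomologicalE2} gives $E_\LL(\cM_i,\cM_j) = \dim\Hom_\LL(\tau_\LL^-(M_j),M_i) = 0$ for all $i,j$; in particular $E_\LL(\cM_i,\cM_i) = 0$, and as $\cO(\cM_i)$ is dense in $Z_i$ this forces the generic value $E_\LL(Z_i)$ to be $0$. Combining with Lemma~\ref{lemmaineq} we obtain $c_\LL(Z_i) = e_\LL(Z_i) = E_\LL(Z_i) = 0$, so $Z_i \in \decirr^\sr(\LL)$.

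It then remains to observe that, since $\cO(\cM_i) \times \cO(\cM_j)$ is dense in $Z_i \times Z_j$ and $E_\LL(\cM_i,\cM_j) = 0$ for all $i,j$, the generic value $E_\LL(Z_i,Z_j)$ equals $0$ for all $i,j$. Hence there is an edge between $Z_i$ and $Z_j$ whenever $i \ne j$ and a loop at every $Z_i$ in $\Gamma(\decirr^\sr(\LL))$; that is, $\{Z_1,\ldots,Z_t\}$ is the vertex set of a loop-complete subgraph. Theorem~\ref{upperbound} then gives $t = |\{Z_1,\ldots,Z_t\}| \le n$, as desired. The step I expect to be the main obstacle is verifying that the orbit closures $Z_i$ are genuine strongly reduced components; this relies on the rigidity of the $M_i$ (through Auslander--Reiten duality), Voigt's Lemma, and density of orbits, whereas the remaining manipulations of $c_\LL$, $e_\LL$ and $E_\LL$ are routine.
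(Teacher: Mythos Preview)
Your proof is correct and is precisely the argument the paper intends: the Corollary is stated immediately after Theorem~\ref{upperbound} with no proof, and you have carefully carried out the intended reduction by showing that the orbit closures $Z_i=\overline{\cO(M_i,0)}$ are indecomposable strongly reduced components forming a loop-complete subgraph of $\Gamma(\decirr^\sr(\LL))$. All the ingredients you invoke (Auslander--Reiten duality to get rigidity, Voigt's Lemma for openness of the orbit, Proposition~\ref{HomologicalE2} to identify $E_\LL(\cM_i,\cM_j)$ with $\dim\Hom_\LL(\tau_\LL^-(M_j),M_i)$, and density of orbits to pass to generic values) are exactly right.
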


The following conjecture might be a bit too optimistic.
But 
it is true for $\LL = \CQ{Q}$ the path algebra of an acyclic quiver $Q$, see \cite[Corollary~21]{DW} 
and Section~\ref{srhereditary}.

\begin{Conj}\label{conjgraph1}
For any basic algebra $\LL$ the following hold:
\begin{itemize}

\item[(i)]
The component clusters of  $\LL$
have cardinality at most $n$.

\item[(ii)]
The $E$-rigid component clusters of $\LL$ are exactly the
component clusters of cardinality $n$.

\end{itemize}
\end{Conj}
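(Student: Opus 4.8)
\textbf{A strategy towards Conjecture~\ref{conjgraph1}.}
For (i) the plan is to prove the stronger statement that for every component cluster $\cU=\{Z_1,\ldots,Z_m\}$ the generic $g$-vectors $g_\LL(Z_1),\ldots,g_\LL(Z_m)$ are linearly independent over $\Q$; since they lie in $\Z^n$, this forces $m\le n$. I would split $\cU=\cU^\circ\sqcup\cU^\bullet$, where $\cU^\circ$ consists of the \emph{loop vertices} (those $Z_i$ with $E_\LL(Z_i,Z_i)=0$). As $\cU$ is complete, $\cU^\circ$ is loop-complete, and the argument behind Theorem~\ref{upperbound} --- additivity of $g$-vectors (Lemma~\ref{gadditive}), Theorem~\ref{decomp2}, injectivity of $G_\LL^\sr$ (Theorem~\ref{parametrization}) and the counting lemma of \cite{GS} --- should in fact give that the $g$-vectors coming from $\cU^\circ$ are $\Q$-linearly independent. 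Then, starting from any nontrivial relation among all the $g_\LL(Z_i)$, I would move negative terms across and clear denominators to obtain
$$
\sum_{i\in S^+}a_i\,g_\LL(Z_i)=\sum_{i\in S^-}b_i\,g_\LL(Z_i),\qquad a_i,b_i\in\Z_{>0},\quad S^+\cap S^-=\varnothing,
$$
with at least one side nonempty. Provided every vertex occurring here with multiplicity $\ge 2$ is a loop vertex, Theorem~\ref{decomp2} shows that both $\overline{\bigoplus_{i\in S^+}Z_i^{a_i}}$ and $\overline{\bigoplus_{i\in S^-}Z_i^{b_i}}$ are strongly reduced components (each $Z_i$ is strongly reduced, the $E_\LL(Z_i,Z_j)$ vanish for $i\ne j$ because $\cU$ is complete, and $E_\LL(Z_i,Z_i)=0$ exactly on $\cU^\circ$); by Lemma~\ref{gadditive} they share a generic $g$-vector, while by the uniqueness in Theorem~\ref{decomp1}(iii) they are distinct components, contradicting Theorem~\ref{parametrization}(i).

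For (ii), one direction is fast: if $\cU$ is $E$-rigid then all its vertices carry loops, so $\cU$ is loop-complete and $|\cU|\le n$ by Theorem~\ref{upperbound}. To get $|\cU|=n$ I would use Lemma~\ref{reduction} and Proposition~\ref{HomologicalE2} to pass to a truncation $\LL_p$ with $p\gg 0$, reducing to the case $\LL$ finite-dimensional with $G_\LL^\sr\colon\decirr^\sr(\LL)\to\Z^n$ a bijection (Theorem~\ref{parametrization}(ii)). If $m:=|\cU|<n$, then $\Span_\Q\{g_\LL(Z_i)\}$ is a proper subspace and, by $E$-rigidity, $\overline{Z_1^{a_1}\oplus\cdots\oplus Z_m^{a_m}}$ is strongly reduced for all $\ba\in\N^m$; the aim is to produce an indecomposable strongly reduced $W\notin\cU$ with $E_\LL(W,Z_i)=E_\LL(Z_i,W)=0$ for all $i$, contradicting maximality of $\cU$. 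The natural candidate for $W$ is the unique strongly reduced component realizing a well-chosen $g$-vector off $\Span_\Q\{g_\LL(Z_i)\}$, to be analyzed through Theorem~\ref{plamondon1}(ii) together with Lemmas~\ref{gadditive} and \ref{cadditive}. Conversely, if $|\cU|=n$, then the $g_\LL(Z_i)$ are a $\Q$-basis (by the linear independence from (i)) and are sign-coherent by Theorem~\ref{signcohintro}; I would argue that this forces $c_\LL(Z_i)=E_\LL(Z_i)=0$ for each $i$ --- i.e. $E$-rigidity --- by comparing the component $\overline{\bigoplus_{j\ne i}Z_j}$ with the possible completions of the sub-configuration $\{g_\LL(Z_j)\}_{j\ne i}$.

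The hard part, and the reason the conjecture may be too optimistic, is exactly the case left open in the first paragraph: a non-loop vertex $Z$ (for instance one with $g_\LL(Z)=0$, which necessarily has $E_\LL(Z,Z)\ne 0$ --- otherwise Theorem~\ref{decomp2} would make $\overline{Z^a}$ strongly reduced for all $a$ with generic $g$-vector $0$, contradicting Theorem~\ref{parametrization}(i)) that is nonetheless $E$-compatible with a size-$n$ component cluster, or more generally a non-loop vertex forced to appear with multiplicity $\ge 2$ in every dependency relation. Controlling such vertices --- equivalently, establishing a ``no gaps'' property for the $g$-vector fan of $\LL$, so that every $E$-rigid sub-configuration extends to a maximal one of size $n$ and nothing can be appended beyond a loop-complete family of full size --- is the genuine obstacle; a failure here would produce a counterexample to (i).
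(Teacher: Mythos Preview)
This statement is a \emph{conjecture} in the paper, not a theorem; the paper offers no proof and in fact remarks that it ``might be a bit too optimistic.'' So there is nothing to compare your argument against, and your proposal is correctly framed as a strategy rather than a proof.

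Your diagnosis of the obstruction in (i) is accurate, and it is a genuine gap, not a technicality. The rearrangement argument --- splitting a rational dependence into two disjoint $\N$-combinations and using Theorem~\ref{decomp2} to produce two distinct strongly reduced components with the same generic $g$-vector --- only goes through when every index appearing with multiplicity $\ge 2$ carries a loop. You cannot in general arrange this: if some non-loop $Z\in\cU$ has $g_\LL(Z)$ lying in the $\Q$-span of the remaining $g$-vectors with a coefficient of absolute value exceeding $1$, no rewriting makes $Z$ appear at most once on each side. More to the point, $\Q$-linear independence of the $g$-vectors of a component cluster is strictly stronger than~(i) and is not known even in cases where~(i) is established; proving it would already be a significant result.

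For (ii), the truncation step is more fragile than you suggest. Passing to $\LL_p$ and invoking Theorem~\ref{IyamaReiten}(ii) does produce an indecomposable completion $\cN\in\decrep(\LL_p)$ of the $E$-rigid family, but the orbit closure of $\cN$ may well have underlying dimension $\ge p$ and hence not lie in $\decirr_{<p}^\sr(\LL_p)$; in that case it does not correspond to a strongly reduced component for $\LL$. The paper flags exactly this phenomenon right after Theorem~\ref{IyamaReiten}, and the example in Section~\ref{9.3.1} shows the failure concretely. Your converse direction (cardinality $n$ implies $E$-rigid) is only gestured at: sign-coherence together with the $g$-vectors spanning $\Q^n$ does not by itself force $E_\LL(Z_i)=0$, and the comparison you propose with ``possible completions of $\{g_\LL(Z_j)\}_{j\ne i}$'' is not an argument as it stands.
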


\subsection{$E$-rigid representations}
After most of this work was done, we learned that
Iyama and Reiten \cite{IR} obtained some beautiful results on
socalled $\tau$-\emph{rigid modules} over finite-dimensional algebras, which fit perfectly
into the framework of Caldero-Chapoton algebras.

Adapting their terminology to decorated representations of basic algebras,
a decorated representation $\cM$ of a basic algebra $\LL$
is called $E$-\emph{rigid} provided $E_\LL(\cM) = 0$.
The following theorem is just a reformulation of Iyama
and Reiten's results on $\tau$-rigid modules.
Part (i) follows also directly from the
more general statement in Theorem~\ref{upperbound}.

For $\cM \in \decrep(\LL)$ let 
$\Sigma(\cM)$ be the number of isomorphism classes
of
indecomposable direct summands of $\cM$.

\begin{Thm}[{\cite{IR}}]\label{IyamaReiten}
Let $\LL = \CQ{Q}/I$ be a finite-dimensional basic algebra.
For $\cM \in \decrep(\LL)$ the following hold:
\begin{itemize}

\item[(i)]
If $\cM$ is $E$-rigid, then $\Sigma(\cM) \le n$.

\item[(ii)]
For each $E$-rigid $\cM \in \decrep(\LL)$ there exists
some $\cN \in \decrep(\LL)$ such that
$\cM \oplus \cN$ is $E$-rigid and 
$\Sigma(\cM \oplus \cN) = n$.

\item[(iii)]
For each $E$-rigid $\cM \in \decrep(\LL)$ with $\Sigma(\cM) = n-1$ there are exactly two non-isomorphic
indecomposable decorated representations
$\cN_1,\cN_2 \in \decrep(\LL)$ such that $\cM \oplus \cN_i$
is $E$-rigid and $\Sigma(\cM \oplus \cN_i) = n$ for
$i=1,2$.
 
\end{itemize}
\end{Thm}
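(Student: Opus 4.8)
The plan is to recognise Theorem~\ref{IyamaReiten} as the translation, into the language of decorated representations, of Iyama and Reiten's results on $\tau$-rigid pairs, the real content being a precise dictionary.

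\emph{A dictionary.} Let $P_i$ denote the indecomposable projective $\LL$-module with top $S_i$. To a decorated representation $\cM = (M,V)$ with $\dimv(V) = (v_1,\ldots,v_n)$ I would attach the pair $(M,P_V)$ where $P_V := P_1^{v_1} \oplus \cdots \oplus P_n^{v_n}$; since $\cM = M \oplus \bigoplus_{i=1}^n(\cS_i^-)^{v_i}$, this simply replaces each negative simple $\cS_i^-$ by $P_i$. This is a bijection between isomorphism classes of decorated representations of $\LL$ and isomorphism classes of pairs $(M,P)$ with $M \in \rep(\LL)$ and $P$ projective, and since the $P_i$ are pairwise non-isomorphic it identifies $\Sigma(\cM)$ with the number of pairwise non-isomorphic indecomposable summands of $(M,P_V)$.

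\emph{$E$-rigidity versus $\tau$-rigidity.} As $\LL$ is finite-dimensional, choosing $p > \dim(M)$ and applying Proposition~\ref{HomologicalE2} gives
$$
E_\LL(\cM) = \dim\Hom_\LL(\tau_\LL^-(M),M) + \sum_{i=1}^n \dim(M_i)\dim(V_i),
$$
and both summands are non-negative (the first by the Corollary following Proposition~\ref{HomologicalE2}). Hence $E_\LL(\cM) = 0$ iff $\Hom_\LL(\tau_\LL^-(M),M) = 0$ and $\dim(M_i)\dim(V_i) = 0$ for all $i$. By Auslander--Reiten duality the first condition is equivalent to $\Hom_\LL(M,\tau_\LL(M)) = 0$, i.e.\ to $M$ being $\tau$-rigid, and since $\dim\Hom_\LL(P_i,M) = \dim(M_i)$ the second condition says exactly $\Hom_\LL(P_V,M) = 0$. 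So $\cM$ is $E$-rigid iff $(M,P_V)$ is a $\tau$-rigid pair in the sense of \cite{IR}, and support $\tau$-tilting pairs then correspond to $E$-rigid $\cM$ with $\Sigma(\cM) = n$.

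\emph{Conclusion.} Granting the dictionary, (i) is the bound that a $\tau$-rigid pair has at most $n$ summands; alternatively, as noted in the paper, it follows from Theorem~\ref{upperbound}: if $\cM$ is $E$-rigid with $\Sigma(\cM) = r$ and $\cN_1,\ldots,\cN_r$ its pairwise non-isomorphic indecomposable summands, then biadditivity of $E_\LL(-,?)$ together with $E_\LL \ge 0$ forces $E_\LL(\cN_j,\cN_k) = 0$ for all $j,k$, each $\cN_j$ is rigid so $\overline{\cO(\cN_j)}$ is an indecomposable component, these components are strongly reduced and pairwise distinct (the orbits being open), and they form a loop-complete subgraph of $\Gamma(\decirr^\sr(\LL))$, whence $r \le n$. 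Part (ii) is the completion of any $\tau$-rigid pair to a support $\tau$-tilting pair, and (iii) is the fact that an almost complete support $\tau$-tilting pair (one with $n-1$ summands) has exactly two completions; both are theorems of \cite{IR} and transport back along the dictionary. The main obstacle is the dictionary of the second step: one must match the exact conventions of \cite{IR} with the formula for $E_\LL$ above, the delicate point being the Auslander--Reiten-theoretic equivalence $\Hom_\LL(\tau_\LL^-(M),M) = 0 \Leftrightarrow \Hom_\LL(M,\tau_\LL(M)) = 0$; once this is in place the three statements are routine translations.
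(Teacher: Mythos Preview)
Your overall strategy matches the paper exactly: the paper does not prove Theorem~\ref{IyamaReiten} but merely states that it is ``just a reformulation of Iyama and Reiten's results on $\tau$-rigid modules'' and that (i) also follows from Theorem~\ref{upperbound}. You make the reformulation explicit via a dictionary and supply the alternative argument for (i), which is precisely what is called for.

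However, the dictionary contains a genuine error. The equivalence
\[
\Hom_\LL(\tau_\LL^-(M),M)=0 \;\Longleftrightarrow\; \Hom_\LL(M,\tau_\LL(M))=0
\]
that you flag as ``delicate'' is in fact \emph{false}. Take the finite-dimensional algebra $\LL_2$ of Section~\ref{9.3.1} (quiver $1 \xleftarrow{b} 2$ with a loop $a$ at $2$, relations $ba$ and $a^2$). The indecomposable projective $P_2$ has dimension vector $(1,2)$, $\dim\End_{\LL_2}(P_2)=2$, and minimal injective copresentation $0\to P_2\to I_1\oplus I_2\to I_2$, so $g_{\LL_2}(P_2)=(-1,0)$ and
\[
E_{\LL_2}((P_2,0)) \;=\; 2 + (1,2)\cdot(-1,0) \;=\; 1 \;\neq\; 0.
\]
Thus $\Hom_{\LL_2}(\tau^-P_2,P_2)\neq 0$ by Proposition~\ref{HomologicalE2}, while $\Hom_{\LL_2}(P_2,\tau P_2)=0$ since $P_2$ is projective. (This is consistent with the component graph displayed in Section~\ref{9.3.1}, where $P_2$ does not appear.) Auslander--Reiten duality only identifies the \emph{stable} spaces $\overline{\Hom}(M,\tau M)\cong D\Ext^1(M,M)\cong\underline{\Hom}(\tau^-M,M)$, not the full Hom-spaces.

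The repair is immediate: use injectives instead of projectives. Since $\dim(M_i)=\dim\Hom_\LL(M,I_i)$ by Equation~(\ref{dimMi}), setting $I_V:=\bigoplus_i I_i^{v_i}$ one gets that $E_\LL(\cM)=0$ is equivalent to $\Hom_\LL(\tau_\LL^-M,M)=0$ together with $\Hom_\LL(M,I_V)=0$; in other words, $\cM$ is $E$-rigid iff $(M,I_V)$ is a $\tau^-$-rigid pair. The results of \cite{IR} have exact $\tau^-$-analogues (apply the $\tau$-tilting theory to $\LL^{\rm op}$ and dualise via $D$), and (i)--(iii) then transport back along this corrected dictionary exactly as you describe.
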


It is easy to find examples of infinite dimensional
basic algebras $\LL$ such that 
Theorem~\ref{IyamaReiten}(iii) does not hold, see Section~\ref{9.3.1}.

A basic algebra $\LL$ is \emph{representation-finite}
if there are only finitely many isomorphism classes
of indecomposable representations in $\rep(\LL)$.
One easily checks that $\LL$ is finite-dimensional
in this case.

\begin{Cor}\label{cor6.5}
Assume that $\LL$ is a representation-finite basic algebra.
Then the following hold:
\begin{itemize}

\item[(i)]
Each component cluster of $\LL$ is $E$-rigid.

\item[(ii)]
Each component cluster of $\LL$ has cardinality $n$.

\item[(iii)]
There is bijection between the set of isomorphism classes
of $E$-rigid representation of $\LL$ to
the set $\decirr^\sr(\LL)$ of strongly reduced components.
Namely, one maps an $E$-rigid representation $\cM$
to the closure of the orbit $\cO(\cM)$.

\end{itemize}
\end{Cor}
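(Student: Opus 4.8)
The plan is to first record, using representation-finiteness, a dimension count that identifies strongly reduced components with orbit closures of $E$-rigid decorated representations, and then to translate Theorems~\ref{decomp2}, \ref{upperbound} and \ref{IyamaReiten} into the statements about component clusters. \emph{Preliminary reduction.} Since $\LL$ is representation-finite it is finite-dimensional, and for each $(\bd,\bv)$ the variety $\decrep_{\bd,\bv}(\LL) \cong \rep_\bd(\LL)$ decomposes into finitely many $G_\bd$-orbits; hence every $Z \in \decirr(\LL)$ contains a unique dense orbit $\cO(\cM_Z)$, so $\dim Z = \dim\cO(\cM_Z)$ and thus $c_\LL(Z) = 0$, while the generic values of $E_\LL$ on $Z$ and on $Z_1 \times Z_2$ agree with the values at $\cM_Z$ and at $(\cM_{Z_1},\cM_{Z_2})$. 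By Lemma~\ref{lemmaineq} we get $0 = c_\LL(Z) \le e_\LL(Z) \le E_\LL(Z)$, so $Z$ is strongly reduced if and only if $E_\LL(\cM_Z) = 0$, i.e. if and only if $\cM_Z$ is $E$-rigid, and in that case $E_\LL(Z) = 0$.

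For (iii), I would check that $\cM \mapsto \overline{\cO(\cM)}$ is well defined on isomorphism classes of $E$-rigid decorated representations: if $\cM = (M,V)$ is $E$-rigid, then Proposition~\ref{HomologicalE2} together with the Auslander--Reiten formula gives $\Ext_\LL^1(M,M) = 0$ (passing from $\LL_p$ to $\LL$ via Lemma~\ref{trunclemma}(iii)), so by Voigt's Lemma the orbit $\cO(\cM)$ is open in $\decrep_{\dimv(\cM)}(\LL)$, whence $\overline{\cO(\cM)}$ is an irreducible component with dense orbit $\cO(\cM)$ and, by the preliminary reduction, is strongly reduced. Injectivity is immediate, since two $E$-rigid representations with equal orbit closure have open, hence dense, hence intersecting orbits in an irreducible variety and are therefore isomorphic; surjectivity is the last sentence of the preliminary reduction. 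For (i), every vertex of $\Gamma(\decirr^\sr(\LL))$ is a strongly reduced component, hence has vanishing $E$-invariant by the preliminary reduction, so each component cluster is $E$-rigid; being $E$-rigid it is loop-complete, so Theorem~\ref{upperbound} gives $|\cU| \le n$, which is one half of (ii).

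The remaining, and hardest, point is $|\cU| \ge n$ in (ii), where the module-theoretic completion statement of Iyama--Reiten must be matched to the combinatorics of $\Gamma(\decirr^\sr(\LL))$. Given a component cluster $\cU$ with $|\cU| = t$, I would use (iii) to write $Z = \overline{\cO(\cM_Z)}$ with $\cM_Z$ indecomposable and $E$-rigid; the $\cM_Z$, $Z \in \cU$, are pairwise non-isomorphic, and $E_\LL(\cM_Z,\cM_{Z'}) = 0$ for $Z \ne Z'$ by the edge condition, so that (using additivity of $E_\LL(-,-)$ in each variable on direct sums and $E_\LL \ge 0$) $\cM := \bigoplus_{Z \in \cU} \cM_Z$ is $E$-rigid with $\Sigma(\cM) = t$. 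If $t < n$, Theorem~\ref{IyamaReiten}(ii) produces $\cN$ with $\cM \oplus \cN$ $E$-rigid and $\Sigma(\cM \oplus \cN) = n$, so some indecomposable summand $\cN_0$ of $\cN$ is not isomorphic to any $\cM_Z$; then $W := \overline{\cO(\cN_0)}$ is, by (iii), an indecomposable strongly reduced component not in $\cU$, and since $\cM_Z$ and $\cN_0$ are direct summands of the $E$-rigid representation $\cM \oplus \cN$ we get $E_\LL(W,Z) = E_\LL(Z,W) = 0$ for all $Z \in \cU$, transferring the vanishing cross-$E$-invariants from summands to components via genericity. Thus $\cU \cup \{W\}$ is a strictly larger complete subgraph, contradicting maximality, so $|\cU| = n$. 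The main obstacles are exactly the bookkeeping just indicated: that representation-finiteness forces each component to be an orbit closure, that the extra $E$-rigid summand from Iyama--Reiten yields a genuine new vertex of the component graph adjacent to all old ones, and that summand-wise vanishing of $E$-invariants transfers to the associated components.
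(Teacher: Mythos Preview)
Your proof is correct and follows essentially the same route as the paper: both hinge on the single observation that representation-finiteness forces every component to contain a dense orbit, so $c_\LL(Z)=0$ and hence strongly reduced components are exactly orbit closures of $E$-rigid decorated representations; this immediately yields (i) and (iii), and (ii) then comes from Theorem~\ref{IyamaReiten}. The paper's proof is a terse three-line version of what you wrote; your additional bookkeeping (Voigt's Lemma for openness, the injectivity argument, the explicit completion step via Iyama--Reiten and the check that the new summand gives a genuinely new adjacent vertex) fills in details the paper leaves implicit, and your separate citation of Theorem~\ref{upperbound} for the bound $|\cU|\le n$ is cleaner than the paper's bare reference to Theorem~\ref{IyamaReiten}(ii) for all of (ii).
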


\begin{proof}
Since $\LL$ is representation-finite, every irreducible  component $Z \in \decirr(\LL)$ is a union of finitely many orbits, and exactly one of these orbits has do be dense in 
$Z$. 
Thus we have $c_\LL(Z) = 0$. 
This implies (i) and (iii).
Now (ii) follows directly from Theorem~\ref{IyamaReiten}(ii).
\end{proof}

\subsection{Generic Caldero-Chapoton functions}\label{defgenericCC}
For each $(\bd,\bv) \in \N^n \times \N^n$ let
$$
C_{\bd,\bv}\colon \decrep_{\bd,\bv}(\LL) \to \Z[x_1^\pm,\ldots,x_n^\pm]
$$
be the function defined by $\cM \mapsto C_\LL(\cM)$.
The map $C_{\bd,\bv}$ is a constructible function.
In particular, the image of $C_{\bd,\bv}$ is finite.
Thus 
for an irreducible component $Z \in \decirr_{\bd,\bv}(\LL)$
there exists a dense open subset $U \subseteq Z$ such
that $C_{\bd,\bv}$ is constant on $U$.
Define 
$$
C_\LL(Z) := C_\LL(\cM)
$$ 
with $\cM$ any representation in $U$.
The element $C_\LL(Z)$ depends only on $Z$ and not on the
choice of $U$.

Define
$$
\cB_\LL := \{ C_\LL(Z) \mid Z \in \decirr^\sr(\LL) \}.
$$
We refer to the elements of $\cB_\LL$ as \emph{generic
Caldero-Chapoton functions}.

\begin{Prop}
Let $\LL = \CQ{Q}/I$ be a basic algebra.
If $\Ker(B_Q) \cap \Q_{\ge 0}^n = 0$, then
$\cB_\LL$ is 
linearly independent in $\cA_\LL$.
\end{Prop}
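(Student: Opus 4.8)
The plan is to deduce this directly from Proposition~\ref{li2} together with the injectivity of $G_\LL^\sr$ established in Theorem~\ref{parametrization}(i); essentially all the work has already been done in those two results.

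First I would choose, for each strongly reduced component $Z \in \decirr^\sr(\LL)$ with $\dimv(Z) = (\bd,\bv)$, a decorated representation $\cM_Z \in Z$ lying in the intersection of the dense open subset of $Z$ on which the constructible function $C_{\bd,\bv}$ is constant (and hence equal to $C_\LL(Z)$) with the dense open subset of $Z$ on which $\cM \mapsto g_\LL(\cM)$ is constant (and hence equal to the generic $g$-vector $g_\LL(Z)$). Since $Z$ is irreducible, this intersection is again dense and open, hence non-empty, so such an $\cM_Z$ exists; by construction it satisfies $C_\LL(\cM_Z) = C_\LL(Z)$ and $g_\LL(\cM_Z) = g_\LL(Z)$.

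Next I would invoke Theorem~\ref{parametrization}(i): the map $G_\LL^\sr\colon \decirr^\sr(\LL) \to \Z^n$, $Z \mapsto g_\LL(Z)$, is injective. Hence the $g$-vectors $g_\LL(\cM_Z)$, $Z \in \decirr^\sr(\LL)$, are pairwise different. Thus the family $(\cM_Z)_{Z \in \decirr^\sr(\LL)}$ satisfies hypothesis (ii) of Proposition~\ref{li2}, while hypothesis (i) of that proposition is precisely our standing assumption $\Ker(B_Q) \cap \Q_{\ge 0}^n = 0$. Proposition~\ref{li2} therefore yields that the Caldero-Chapoton functions $C_\LL(\cM_Z) = C_\LL(Z)$, $Z \in \decirr^\sr(\LL)$, are pairwise different and linearly independent in $\cA_\LL$; that is, $\cB_\LL$ is linearly independent. (In particular $Z \mapsto C_\LL(Z)$ is injective, so $\cB_\LL$ is genuinely indexed by $\decirr^\sr(\LL)$ and no collapsing of the index set occurs.)

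The only step that needs any justification beyond quoting the two earlier results is the compatibility of the two generic choices in the second paragraph, and this is immediate from the irreducibility of $Z$; so I do not anticipate any real obstacle.
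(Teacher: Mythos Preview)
Your argument is correct and is essentially the same as the paper's proof: choose a generic $\cM_Z$ in each $Z$ realizing both $C_\LL(Z)$ and $g_\LL(Z)$, use Theorem~\ref{parametrization}(i) to get pairwise distinct $g$-vectors, and apply Proposition~\ref{li2}. The paper states this in three sentences without spelling out the intersection-of-dense-subsets step, but the content is identical.
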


\begin{proof}
For each $Z \in \decirr^\sr(\LL)$ there is some $\cM \in Z$
such that $g_\LL(\cM) = g_\LL(Z)$ and $C_\LL(\cM) = C_\LL(Z)$.
By Theorem~\ref{parametrization}(i) the generic
$g$-vectors of the strongly reduced
components of decorated representations of $\LL$ are
pairwise different.
Now Proposition~\ref{li2} yields the result.
\end{proof}

If $\cB_\LL$ is a basis of $\cA_\LL$, then we call $\cB_\LL$
the \emph{generic basis} of $\cA_\LL$.

\subsection{$CC$-clusters}
For a component cluster $\cU$ of a basic algebra 
$\LL$
let
$$
\cC_\cU := \{ C_\LL(Z) \mid Z \in \cU \}
\text{\;\;\; and \;\;\;}
\cM_\cU := \{ \prod_{Z \in \cU} C_\LL(Z)^{a_Z} \mid a_Z \in I_Z \}
$$
where 
$$
I_Z := \begin{cases}
\N & \text{if $E_\LL(Z,Z) = 0$},\\
\{ 0,1 \} & \text{otherwise}.
\end{cases}
$$
(In each of the products above
we assume that $a_Z = 0$ for all but finitely many
$Z \in \cU$.)
The set $\cC_\cU$ is called a $CC$-\emph{cluster} of $\LL$, 
and the elements in $\cM_\cU$ are 
$CC$-\emph{cluster monomials}.
(The letters $CC$ just indicate that we deal with sets of
Caldero-Chapoton functions.)
A $CC$-cluster $\cC_\cU$ is $E$-\emph{rigid} provided
$E_\LL(Z) = 0$ for all $Z \in \cU$.

Note that 
$$
\cC_\cU \subseteq \cM_\cU \subseteq \cA_\LL.
$$
The following result is a direct consequence of the
definition of $\cB_\LL$ and 
Theorem~\ref{decomp2}.

\begin{Prop}
Let $\LL$ be a basic algebra.
Then
$$
\cB_\LL = \bigcup_{\cU} \cM_\cU
$$
where the union is over all component clusters $\cU$ of $\LL$.
\end{Prop}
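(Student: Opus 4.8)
The plan is to prove the two inclusions $\cB_\LL \subseteq \bigcup_\cU \cM_\cU$ and $\bigcup_\cU \cM_\cU \subseteq \cB_\LL$ separately, using the decomposition theorems (Theorems~\ref{decomp1} and \ref{decomp2}) together with the multiplicativity of Caldero-Chapoton functions on direct sums of components. The underlying fact I would isolate first is: if $Z = \overline{Z_1 \oplus \cdots \oplus Z_t}$ with $Z_1,\ldots,Z_t \in \decirr(\LL)$, then $C_\LL(Z) = \prod_{i=1}^t C_\LL(Z_i)$. To see this, pick dense open subsets $U_i \subseteq Z_i$ on which $C_{\dimv(Z_i)}$ is constant with value $C_\LL(Z_i)$; the sum morphism $G_\bd \times Z_1 \times \cdots \times Z_t \to Z$ (as in the proof of Lemma~\ref{dimlemma3}) is dominant, so its restriction to $G_\bd \times U_1 \times \cdots \times U_t$ still has dense image, which by Lemma~\ref{additive}(iii) consists of points $\cM$ with $C_\LL(\cM) = \prod_i C_\LL(Z_i)$; intersecting this dense constructible set with the dense open subset of $Z$ on which $C_{\dimv(Z)}$ equals $C_\LL(Z)$ gives the identity.

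For $\cB_\LL \subseteq \bigcup_\cU \cM_\cU$: given $Z \in \decirr^\sr(\LL)$, I would use the remark following Theorem~\ref{decomp2} to write $Z = \overline{Z_1 \oplus \cdots \oplus Z_t}$ with each $Z_i \in \decirr^\sr(\LL)$ indecomposable. By Theorem~\ref{decomp2} we have $E_\LL(Z_i,Z_j) = 0$ for all $i \neq j$, so the set $\cU_0$ of pairwise distinct components among $Z_1,\ldots,Z_t$ spans a complete subgraph of $\Gamma := \Gamma(\decirr^\sr(\LL))$. By Zorn's lemma (completeness has finite character) I extend $\cU_0$ to a maximal complete subgraph, i.e.\ a component cluster $\cU \supseteq \cU_0$. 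Let $a_{Z'}$ be the multiplicity of $Z'$ among the $Z_i$ for $Z' \in \cU_0$, and $a_{Z'} = 0$ otherwise. If $a_{Z'} \ge 2$ then $Z'$ occurs at least twice, hence $E_\LL(Z',Z') = 0$ and $I_{Z'} = \N$; if $a_{Z'} \le 1$ then trivially $a_{Z'} \in \{0,1\} \subseteq I_{Z'}$. Thus $a_{Z'} \in I_{Z'}$ for all $Z' \in \cU$, and the multiplicativity fact gives $C_\LL(Z) = \prod_i C_\LL(Z_i) = \prod_{Z' \in \cU} C_\LL(Z')^{a_{Z'}} \in \cM_\cU$.

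For the reverse inclusion: let $\cU$ be a component cluster and $m = \prod_{Z \in \cU} C_\LL(Z)^{a_Z} \in \cM_\cU$ with $a_Z \in I_Z$ and $a_Z = 0$ for all but finitely many $Z$. Put $\cU_0 := \{ Z \in \cU \mid a_Z > 0 \}$ and $Z_m := \overline{\bigoplus_{Z \in \cU_0} Z^{a_Z}}$. I would verify condition (ii) of Theorem~\ref{decomp2} for the family of summands (counted with multiplicity): each $Z \in \cU_0 \subseteq \decirr^\sr(\LL)$ is strongly reduced; for two distinct summands $E_\LL$ vanishes in both directions because $\cU$ is a complete subgraph of $\Gamma$; and for two copies of the same component $Z$ we have $a_Z \ge 2$, which forces $I_Z = \N$, i.e.\ $E_\LL(Z,Z) = 0$. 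Hence $Z_m \in \decirr^\sr(\LL)$, and the multiplicativity fact gives $C_\LL(Z_m) = \prod_{Z \in \cU_0} C_\LL(Z)^{a_Z} = m$, so $m = C_\LL(Z_m) \in \cB_\LL$. I expect the genuinely delicate part to be not the density argument but the bookkeeping matching the exponents $a_Z$ with the index sets $I_Z$, i.e.\ making sure repeated summands are exactly the ones allowed to have $E_\LL(Z,Z) = 0$; everything else is a routine application of Theorems~\ref{decomp1} and \ref{decomp2}.
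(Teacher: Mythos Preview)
Your proof is correct and follows precisely the route the paper intends: the paper records this result as ``a direct consequence of the definition of $\cB_\LL$ and Theorem~\ref{decomp2}'' without further argument, and you have supplied exactly the unwinding one would expect, including the multiplicativity $C_\LL(\overline{Z_1\oplus\cdots\oplus Z_t})=\prod_i C_\LL(Z_i)$ and the bookkeeping matching multiplicities $a_Z$ to the index sets $I_Z$.
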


\subsection{A change of perspective}
The $CC$-clusters are a generalization of the clusters
of a cluster algebra defined by Fomin and Zelevinsky.
In general, the Fomin-Zelevinsky cluster monomials
form just a small subset of the set of $CC$-cluster monomials.
Recall that 
the Fomin-Zelevinsky cluster monomials are obtained by the inductive
procedure of cluster mutation \cite{FZ1,FZ2}, and the 
relation between neighbouring clusters is described by
the exchange relations.
One can see the exchange relations as part of the definition
of a cluster algebra.
On the other hand,
the definition of a
Caldero-Chapoton algebra does not involve any mutations
of $CC$-clusters. 
The $CC$-clusters are given by collections
of irreducible components, and they do not have to be constructed inductively.
One can find a meaningful notion of \emph{neighbouring}
$CC$-clusters, and it remains quite a challenge to actually
determine an analogue of the exchange relations.

\subsection{Open problems}
In this section let $\LL$ be any basic algebra.
The following conjecture is again quite optimistic
in this generality.

\begin{Conj}\label{conj1}
$\cB_\LL$ is a $\C$-basis of $\cA_\LL$.
\end{Conj}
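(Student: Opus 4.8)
As this statement is a conjecture, what follows is a plan of attack rather than a proof. The claim decomposes into two parts: that $\cB_\LL$ is linearly independent over $\C$, and that it spans $\cA_\LL$. By Lemma~\ref{alggen} the whole set $\cC_\LL$ of Caldero-Chapoton functions spans $\cA_\LL$, so for the spanning part it suffices to show that each $C_\LL(\cM)$, $\cM\in\decrep(\LL)$, lies in $\Span_\C(\cB_\LL)$. Linear independence is already known (the Proposition in Section~\ref{defgenericCC}) whenever $\Ker(B_Q)\cap\Q_{\ge 0}^n=0$; in the remaining cases I would try to salvage the proof of Proposition~\ref{li2} by replacing the partial order $\le$ on $\Z^n$ (which is then no longer antisymmetric) with a finer admissible preorder — for instance breaking ties by $|\bd|$, or by a sufficiently generic integral linear functional — so that $g_\LL(\cM)$ still selects a distinguished leading monomial of $C_\LL(\cM)$ whose coefficient can be shown to be nonzero.

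For the spanning part the plan is a unitriangularity (Gaussian-elimination) argument along the $g$-vector filtration. Fix $\cM=(M,V)$; using Lemma~\ref{additive}(ii) one may first strip off the factor $C_\LL(0,V)=\bx^{\dimv(V)}$ and reduce to $V=0$. The key auxiliary statement is a realizability assertion: there exists $Z\in\decirr^\sr(\LL)$ with $g_\LL(Z)=g_\LL(\cM)$ (unique by Theorem~\ref{parametrization}(i)). I expect this to be provable by passing to a truncation $\LL_p$ with $p>\dim(M)$, invoking surjectivity of $G_{\LL_p}^\sr$ (Plamondon, Theorem~\ref{plamondon1}(i)), and then showing — in the spirit of the proof of Theorem~\ref{parametrization}(ii), with the growth estimate $\dim(I_{i,p})\ge p$ as the essential input — that a $g$-vector occurring for a decorated $\LL_p$-module of dimension $<p$ must in fact be realized by a strongly reduced component of dimension $<p$, hence by one over $\LL$. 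Granting this, let $Z$ be that component. By the argument in the proof of Proposition~\ref{li2}, all monomials of $C_\LL(\cM)$ and of $C_\LL(Z)$ are $\le\bx^{g_\LL(\cM)}$, with $\bx^{g_\LL(\cM)}$ the unique maximal one; subtracting a suitable scalar multiple $c\,C_\LL(Z)$ cancels the $\bx^{g_\LL(\cM)}$-term, and the difference $C_\LL(\cM)-c\,C_\LL(Z)$ is supported on strictly smaller monomials, each of the form $\bx^{g_\LL(\cM)+B_Q\be}$ with $0\ne\be\le\dimv(M)$. Since this index set is finite, a Noetherian induction along the resulting finite poset of monomials writes $C_\LL(\cM)$ as a finite $\C$-linear combination of elements of $\cB_\LL$; combined with linear independence this proves $\cB_\LL$ is a basis.

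The genuine obstacles are the two places where this is more than bookkeeping. First, one must be able to divide by nonzero leading coefficients: as in the linear-independence discussion, the naive $g$-vector degree fails to do this when $\Ker(B_Q)\cap\Q_{\ge 0}^n\ne 0$, and it is unclear whether a purely combinatorial refinement suffices or whether genuinely new input is needed. Second, the realizability step is itself a statement about how the geometry of $\decrep_{\bd,\bv}(\LL)$ constrains its $g$-vectors, and when $\cM$ is not generic in its component $C_\LL(\cM)$ is a non-generic value of the constructible function $C_{\bd,\bv}$ — the whole content of the conjecture is that such non-generic values nonetheless expand triangularly into generic ones $C_\LL(Z)$. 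A complementary route worth developing in parallel is to analyse the stratification of $\decrep_{\bd,\bv}(\LL)$ by the fibres of $C_{\bd,\bv}$ directly and to show that each non-generic stratum contributes only combinations of generic Caldero-Chapoton functions attached to components of smaller dimension or smaller $g$-vector; this is essentially how the analogous basis statements are established in the representation-finite case (Corollary~\ref{cor6.5}) and the hereditary case (Section~\ref{srhereditary}), where representation-finiteness, resp. the absence of relations, makes the stratification explicit.
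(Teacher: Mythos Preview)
The statement is a conjecture; the paper does not prove it but only records that it holds for $\LL=\CQ{Q}$ with $Q$ acyclic and in several further classes via \cite{GLSChamber}. So there is no ``paper's proof'' to compare against, and I assess your plan on its own terms.

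Your framing is reasonable and you correctly flag the realizability step and the $\Ker(B_Q)\cap\Q_{\ge 0}^n\ne 0$ issue as the nontrivial inputs. But the spanning argument has a termination gap that you pass over. After the first subtraction you assert that $C_\LL(\cM)-c\,C_\LL(Z)$ is supported on monomials $\bx^{g_\LL(\cM)+B_Q\be}$ with $0\ne\be\le\dimv(M)$. That bound is correct for the monomials coming from $C_\LL(\cM)$, but \emph{not} for those contributed by $-c\,C_\LL(Z)$: the latter are $\bx^{g_\LL(Z)+B_Q\be'}$ with $0\le\be'\le\dimv(\pi Z)$, and nothing ties $\dimv(\pi Z)$ to $\dimv(M)$. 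Hence the support of the difference need not stay inside your ``finite poset'', and at the next step you subtract a further $C_\LL(Z')$ whose support may again enlarge it. The order of Proposition~\ref{li2} has infinite strictly descending chains (for any $i$ with $B_Q\be_i\ne 0$ the sequence $\ba_0+B_Q(k\be_i)$ is one), so there is no well-foundedness to rescue the induction. To make this scheme terminate you would need a uniform bound on $\dimv(\pi Z)$ in terms of $g_\LL(Z)$, or some other size control on the generic Caldero--Chapoton functions you subtract; no such bound is available in general, and this is exactly where the known proofs bring in substantially different machinery rather than a pure leading-term elimination.

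There is also a genuine gap in the realizability step when $\overline{\LL}$ is infinite-dimensional, beyond what you acknowledge. Your reduction to $\LL_p$ produces a component $Z_p\in\decirr^\sr(\LL_p)$ with the right $g$-vector, but promoting it to $\decirr^\sr(\LL)$ via Lemma~\ref{reduction} requires $Z_p\in\decirr_{<p}^\sr(\LL_p)$, i.e.\ $|\dimv(\pi Z_p)|<p$. The proof of Theorem~\ref{parametrization}(ii) exhibits exactly the opposite behaviour: for the $g$-vector $-\be_i$ with $\overline{I}_i$ infinite-dimensional one has $\dim(I_{i,p})\ge p$, so $Z_p$ escapes the window for every $p$. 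You would therefore need to show that $g$-vectors arising as $g_\LL(\cM)$ for an \emph{actual} $\cM\in\decrep(\LL)$ cannot behave this way --- a statement comparing $\Ima(G_\LL)$ with $\Ima(G_\LL^\sr)$ that is not established anywhere in the paper and is itself close in strength to the conjecture.
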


Conjecture~\ref{conj1} is true for every $\LL = \CQ{Q}$ with
$Q$ an acyclic quiver and also for numerous other examples,
see \cite{GLSChamber}.

\begin{Problem}\label{conj2}
Does
the set 
$$
\{ C_\LL(Z) \mid Z \in \decirr^\sr(\LL),\, 
E_\LL(Z) = 0 \}
$$
generate $\cA_\LL$ as a $\C$-algebra?
\end{Problem}

We say that 
$\cA_\LL$ has the \emph{Laurent phenomenon property}, if
for any 
$E$-rigid component cluster
$\{ Z_1,\ldots,Z_n \}$ of $\LL$,
we have
$$
\cA_\LL \subseteq \C[C_\LL(Z_1)^\pm,\ldots,C_\LL(Z_n)^\pm].
$$

\begin{Problem}\label{conj3}
Under which assumptions does $\cA_\LL$ have the Laurent phenomenon property?
\end{Problem}


\section{Caldero-Chapoton algebras and cluster algebras}\label{sec7}


\subsection{Caldero-Chapoton algebras of Jacobian algebras}
Suppose that $Q$ is a 2-acyclic 
quiver
with a non-degenerate potential $W$,
and let $\LL := \cP(Q,W)$ be the associated Jacobian algebra.
Let $\cA_Q$ and $\cA_Q^\up$ be the cluster algebra
and upper cluster algebra associated to $Q$, respectively.
Set $\cA_{Q,W} := \cA_\LL$, $\cB_{Q,W} := \cB_\LL$.
Let 
$$
\cM_{Q,W} := \{ C_\LL(Z) \mid Z \in \decirr^\sr(\LL),\,
E_\LL(Z) = 0
\}.
$$

The first part of the following proposition is a consequence of \cite[Lemma~5.2]{DWZ2},
compare also the calculation at the end of \cite[Section~6.3]{GLSChamber}.
The rest follows from \cite[Corollary~7.2]{DWZ2}.

\begin{Prop}
We have
$$
\cA_Q \subseteq \cA_{Q,W} \subseteq \cA_Q^\up. 
$$
The set $\cM_Q$ of cluster monomials of $\cA_Q$ is contained
in $\cB_{Q,W}$.
More precisely, we have
$$
\cM_Q \subseteq 
\cM_{Q,W} \subseteq
\cB_{Q,W}.
$$
\end{Prop}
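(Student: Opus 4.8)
The plan is to establish the chain $\cA_Q \subseteq \cA_{Q,W} \subseteq \cA_Q^\up$ first, and then the chain $\cM_Q \subseteq \cM_{Q,W} \subseteq \cB_{Q,W}$, treating the two as essentially separate assertions built on the cited results of Derksen--Weyman--Zelevinsky.

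For the first chain, I would argue as follows. For the inclusion $\cA_Q \subseteq \cA_{Q,W}$, recall that $\cA_Q$ is generated as a $\C$-algebra by its cluster variables, so it suffices to show each cluster variable lies in $\cA_{Q,W} = \cA_\LL$. By \cite[Corollary~7.2]{DWZ2}, the cluster variables (and more generally the cluster monomials) of $\cA_Q$ coincide with Caldero--Chapoton functions $C_\LL(\cM)$ for suitable decorated representations $\cM$ of $\LL = \cP(Q,W)$ obtained via the mutation procedure on quivers with potential; here one must check that the version of the $CC$-function used in \cite{DWZ2} matches the function $C_\LL(\cM)$ defined in Section~\ref{defCC}, which follows by comparing the $g$-vector conventions (our $g_\LL$ equals the DWZ $g$-vector by the Lemma in Section~\ref{sec3.1}) and the exchange-matrix convention. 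Hence every cluster variable is an element of $\cC_\LL \subseteq \cA_\LL = \cA_{Q,W}$, giving $\cA_Q \subseteq \cA_{Q,W}$. For the inclusion $\cA_{Q,W} \subseteq \cA_Q^\up$, it suffices by Lemma~\ref{alggen} (since $\cC_\LL$ generates $\cA_\LL$ even as a vector space) to show that each individual function $C_\LL(\cM)$ lies in the upper cluster algebra; but the upper cluster algebra is $\bigcap_{t} \C[x_1^{(t)\pm},\ldots,x_n^{(t)\pm}]$ over all seeds $t$, and the Laurent expansion of $C_\LL(\cM)$ in every cluster is provided by \cite[Lemma~5.2]{DWZ2} together with the mutation compatibility of $CC$-functions (the ``$F$-polynomial / $g$-vector'' Laurent formula), so $C_\LL(\cM)$ is a Laurent polynomial in every cluster and therefore lies in $\cA_Q^\up$. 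The computation ``at the end of \cite[Section~6.3]{GLSChamber}'' referenced in the statement is exactly the verification that the Laurent expansion lands in the right ring.

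For the second chain, the inclusion $\cM_{Q,W} \subseteq \cB_{Q,W}$ is immediate from the definitions: $\cM_{Q,W}$ is by definition $\{C_\LL(Z) \mid Z \in \decirr^\sr(\LL),\ E_\LL(Z)=0\}$, and $\cB_{Q,W} = \cB_\LL = \{C_\LL(Z) \mid Z \in \decirr^\sr(\LL)\}$, so the former is a subset of the latter with no work needed. The inclusion $\cM_Q \subseteq \cM_{Q,W}$ is the substantive one: one takes a cluster monomial $m$ of $\cA_Q$, writes it (again via \cite[Corollary~7.2]{DWZ2}) as $C_\LL(\cM)$ where $\cM$ is an $E$-rigid decorated representation, i.e.\ $E_\LL(\cM) = 0$ (this is where $E$-rigidity of the representations attached to cluster monomials in \cite{DWZ2} is used; an $E$-rigid $\cM$ has $c_\LL(\overline{\cO(\cM)}) = e_\LL(\overline{\cO(\cM)}) = E_\LL(\overline{\cO(\cM)}) = 0$ by Lemma~\ref{lemmaineq}, so the orbit closure $Z := \overline{\cO(\cM)}$ is a strongly reduced component). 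One then checks $C_\LL(Z) = C_\LL(\cM) = m$ because $\cO(\cM)$ is dense in $Z$ and $C_\LL$ is constant on a dense open subset, and $E_\LL(Z) = E_\LL(\cM) = 0$, so $m \in \cM_{Q,W}$. Combining the decomposition $\cM = \bigoplus \cM_i$ into indecomposables with Theorem~\ref{decomp2} shows these $Z$ are exactly the direct sums of indecomposable strongly reduced components with vanishing mutual $E$-invariants, i.e.\ $CC$-cluster monomials, which re-confirms consistency but is not strictly needed for the inclusion.

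The main obstacle is the bookkeeping in matching conventions: the $CC$-functions, $g$-vectors, $F$-polynomials and exchange matrices in \cite{DWZ1,DWZ2} are set up with specific sign and transpose conventions, and one must verify carefully that the function $C_\LL(\cM)$ of Section~\ref{defCC} (with the matrix $B_Q$ and $g_\LL(\cM)$ as defined in Sections~\ref{sec2.1} and \ref{sec3.1}) agrees on the nose with the object whose mutation behavior and Laurent-positivity \cite{DWZ2} controls. Once that identification is pinned down, the inclusions follow formally from \cite[Lemma~5.2]{DWZ2} and \cite[Corollaries~7.2]{DWZ2}; the remaining steps (density of orbits, strong reducedness of $E$-rigid orbit closures via Lemma~\ref{lemmaineq}) are routine given the results already proved in the excerpt.
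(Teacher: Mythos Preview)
Your proposal is correct and follows essentially the same approach as the paper, which gives no detailed proof but merely cites \cite[Lemma~5.2]{DWZ2} and \cite[Section~6.3]{GLSChamber} for the algebra chain and \cite[Corollary~7.2]{DWZ2} for the cluster-monomial chain. You have simply unpacked those citations into the connecting arguments (cluster variables as $CC$-functions, Laurent expansions in every seed via mutation compatibility, and $E$-rigid orbit closures being strongly reduced components via Lemma~\ref{lemmaineq} and Voigt), which is exactly what the paper leaves implicit; the only genuine work, as you note, is the convention-matching between the $CC$-functions here and those in \cite{DWZ2}.
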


In general, the sets $\cM_Q$, $\cM_{Q,W}$ and $\cB_{Q,W}$ are pairwise different.

\subsection{Example}
Let $Q$ be the quiver
$$
\xymatrix@-1.2pc{
&&2 \ar@<0.4ex>[dddrr]^{b_1}\ar@<-0.4ex>[dddrr]_{b_2}\\
&&&&\\
&&&&\\
1 \ar@<0.4ex>[rruuu]^{a_1}\ar@<-0.4ex>[rruuu]_{a_2} &&&&3 \ar@<0.4ex>[llll]^{c_1}\ar@<-0.4ex>[llll]_{c_2}
}
$$
and define
\begin{align*}
W_1 &:= c_1b_1a_1 + c_2b_2a_2,\\
W_2 &:= c_1b_1a_1 + c_2b_2a_2 - c_2b_1a_2c_1b_2a_1.
\end{align*}
It is not difficult to check that $\cP(Q,W_1)$ is infinite dimensional and $\cP(Q,W_2)$ is finite-dimensional.
By
\cite[Proposition~1.26]{BFZ} the algebras $\cA_Q$ and $\cA_Q^\up$ do not coincide.
The potentials $W_1$ and $W_2$ are both non-degenerate,
see \cite[Example~8.6]{DWZ1} and \cite[Example~8.2]{La2},
respectively. 
Furthermore, by
\cite[Example~4.3]{P2} 
the set $\cB_{Q,W_2}$ of generic functions 
is not contained in $\cA_Q$.
In particular, $\cA_Q$ and $\cA_{Q,W_2}$ do not coincide.
We conjecture that $\cA_Q = \cA_{Q,W_1}$ and
$\cA_Q^\up = \cA_{Q,W_2}$.

\subsection{Open problems}

\begin{Problem}\label{conj4}
Are there always 
non-degenerate potentials $W_1$ and
$W_2$ of $Q$ such that
$\cA_Q = \cA_{Q,W_1}$ and $\cA_Q^\up = \cA_{Q,W_2}$?
\end{Problem}

\begin{Problem}\label{conj5}
For a non-degenerate potential $W$ of $Q$ find a
necessary and sufficient condition for 
$\cA_{Q,W} = \cA_Q^\up$.
Is this related to the two conditions
\begin{itemize}

\item[(i)]
$\cP(Q,W)$ is finite-dimensional,

\item[(ii)]
$W$ is rigid (see \cite[Section~6]{DWZ1} for the definition)?

\end{itemize}
\end{Problem}

\begin{Problem}\label{conj45}
Suppose that there is only one non-degenerate potential $W$
of $Q$ up to right equivalence.
Does it follows that
$\cA_Q = \cA_Q^\up$?
\end{Problem}


\section{Sign-coherence of generic $g$-vectors}\label{sec8}


The following result implies Theorem~\ref{signcohintro}.
The special case, where $\LL = \cP(Q,W)$ is a Jacobian algebra
with non-degenerate potential $W$ and $\cU$ is an $E$-rigid
component cluster, is proved in
\cite[Theorem~3.7(1)]{P2}.

\begin{Thm}
Let $\LL$ be a basic algebra, and let
$\cU$ be a component cluster
of $\LL$.
Then the set $\{ g_\LL(Z) \mid Z \in \cU\}$ is sign-coherent.  
\end{Thm}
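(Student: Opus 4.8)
The statement asserts that for a component cluster $\cU$ of $\LL$, the set $\{g_\LL(Z)\mid Z\in\cU\}$ is sign-coherent, i.e. for each coordinate $i$, all the generic $g$-vectors have $i$-th entry of one sign. The plan is to reduce the claim, exactly as in the proof of Lemma~\ref{lemmaineq}, to the finite-dimensional case via truncation, and then to exploit the component decomposition theorem together with the sign-coherence statement that is already available for finite-dimensional algebras from Plamondon's work (which is what \cite[Theorem~3.7(1)]{P2} gives in the Jacobian case, but whose proof works for any finite-dimensional basic algebra).

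First I would observe that it suffices to treat a finite subset $\{Z_1,\ldots,Z_t\}\subseteq\cU$, since sign-coherence of a set is equivalent to sign-coherence of all its finite subsets. Since $\cU$ is a component cluster, the full subgraph on $\{Z_1,\ldots,Z_t\}$ is complete with a loop at every vertex (a maximal complete subgraph of $\Gamma(\decirr^\sr(\LL))$ need not be loop-complete in general, but actually for a pair $Z,Z'$ joined by an edge we have $E_\LL(Z,Z')=E_\LL(Z',Z)=0$; if $Z$ carries no loop we simply take the monomial exponent of $Z$ to be at most $1$, which still allows the argument below — but cleanest is to note that Theorem~\ref{decomp2} applies to any collection with $E_\LL(Z_i,Z_j)=0$ for $i\ne j$, and that is precisely what being pairwise joined by edges means, regardless of loops). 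Hence by Theorem~\ref{decomp2}, for every $\ba=(a_1,\ldots,a_t)\in\N^t$ the component
$$
Z_\ba:=\overline{Z_1^{a_1}\oplus\cdots\oplus Z_t^{a_t}}
$$
is a strongly reduced irreducible component, and by Lemma~\ref{gadditive} its generic $g$-vector is $\sum_j a_j\, g_\LL(Z_j)$.

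Now fix a coordinate $i$, and let $\bd_\ba$ denote the generic dimension vector of $Z_\ba$. I would pick $p>|\bd_\ba|$ for the particular $\ba$ of interest (or, working in $\decirr^\sr(\LL_p)$ for $p$ large, use Lemma~\ref{truncsr} and Lemma~\ref{reduction} to transport all the $Z_j$ into $\decirr^\sr(\LL_p)$ simultaneously; note $g_\LL=g_{\LL_p}$ generically by Lemma~\ref{lemma3.3}). By Theorem~\ref{plamondon1}(ii) applied to $\LL_p$, the injective presentation data of $Z_\ba$ satisfies $\add(I_0^{\LL_p}(Z_\ba))\cap\add(I_1^{\LL_p}(Z_\ba))=0$, and by Lemma~\ref{lemma3.3} the $i$-th entry of $g_\LL(Z_\ba)$ is $-[\soc(I_0^{\LL_p}(Z_\ba)):S_i]+[\soc(I_1^{\LL_p}(Z_\ba)):S_i]+v_i$, where $v_i$ is the $i$-th decoration multiplicity. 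Because the two injective summand-categories are disjoint, for each fixed $i$ at most one of $I_0$ and $I_1$ contains $I_i$ as a summand; so the $i$-th $g$-entry is either $\le 0$ (plus the decoration) or $\ge$ the decoration — i.e. for $\ba$ the sign of the non-decoration part of the $i$-th entry is determined. The key point is that this sign cannot switch as $\ba$ varies: if some $g_\LL(Z_j)$ had positive $i$-th entry and some $g_\LL(Z_k)$ negative $i$-th entry, then choosing $\ba$ with large $a_j$ and $\ba'$ with large $a_k$ would force $g_\LL(Z_\ba)$ and $g_\LL(Z_{\ba'})$ to have $i$-th entries of opposite sign of arbitrarily large magnitude; but each such vector is the generic $g$-vector of a genuine strongly reduced component of $\LL_p$ (for $p$ chosen appropriately), and for finite-dimensional $\LL_p$ the sign-coherence statement (Plamondon's argument, the finite-dimensional version of \cite[Theorem~3.7(1)]{P2}) tells us that within any $E$-compatible family the $i$-th entries are all one sign. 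Contradiction. Therefore the $i$-th entries of $g_\LL(Z_1),\ldots,g_\LL(Z_t)$ are all of one sign, which is what we wanted.

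The main obstacle I anticipate is making the last step fully rigorous without circularity: I am invoking a finite-dimensional sign-coherence statement that the paper attributes to \cite[Theorem~3.7(1)]{P2} only in the Jacobian case, so one must either check that Plamondon's proof goes through verbatim for an arbitrary finite-dimensional basic algebra $\LL_p$ (which it does — it only uses the existence and properties of $I_0^{\LL_p}(Z),I_1^{\LL_p}(Z)$ and Theorem~\ref{plamondon1}(ii), both of which hold for all finite-dimensional basic algebras), or prove the finite-dimensional case directly from Theorem~\ref{plamondon1}(ii) and Lemma~\ref{lemma3.3} as sketched above, deriving sign-coherence of a single component's own $g$-vector and then combining with Lemma~\ref{gadditive} and the fact (Theorem~\ref{decomp1}) that $Z_\ba=Z_\bb$ forces $\ba=\bb$ so that the $g$-vectors $\sum a_j g_\LL(Z_j)$ are pairwise distinct for distinct $\ba$. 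Once one commits to the direct approach, the proof is essentially: each strongly reduced component has a sign-coherent-in-itself $g$-vector by Theorem~\ref{plamondon1}(ii)+Lemma~\ref{lemma3.3}, and all positive integer combinations of $g_\LL(Z_1),\ldots,g_\LL(Z_t)$ are again such $g$-vectors by Theorem~\ref{decomp2}+Lemma~\ref{gadditive}, which is incompatible with two of the $g_\LL(Z_j)$ having opposite-sign $i$-th coordinates.
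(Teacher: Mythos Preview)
Your ingredients are right, but the execution has real gaps. The detour through all $Z_\ba$ with ``large $a_j$'' and the appeal to \cite[Theorem~3.7(1)]{P2} for $\LL_p$ is both circular (that is the finite-dimensional instance of what you are proving) and insufficient (Plamondon's statement is only for Jacobian algebras and $E$-rigid clusters). Your ``direct approach'' at the end is the correct idea, but two pieces are missing. First, you never state the key fact that generic minimal injective presentations are additive: for $Z = \overline{Z_1 \oplus Z_2}$ one has $I_k^{\LL_p}(Z) \cong I_k^{\LL_p}(Z_1) \oplus I_k^{\LL_p}(Z_2)$ for $k=0,1$. Only with this does Theorem~\ref{plamondon1}(ii) applied to $Z$ force $\add(I_0^{\LL_p}(Z_1)) \cap \add(I_1^{\LL_p}(Z_2)) = 0$, which via Lemma~\ref{lemma3.3} is what actually rules out having the $i$th entry of $g_\LL(Z_1)$ negative and that of $g_\LL(Z_2)$ positive in the undecorated case. (There is no need for general $\ba$: a single pair $Z_1, Z_2$ suffices. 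Your aside that $Z_\ba$ is strongly reduced ``regardless of loops'' is in fact wrong whenever some exponent exceeds $1$, since then $E_\LL(Z_i,Z_i)=0$ is required.)

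Second, you do not handle decorations, and the injective-presentation argument breaks there. If $Z_2$ is indecomposable with nonzero decoration part, then $Z_2 = \cO(\cS_i^-)$ for some $i$ and $g_\LL(Z_2) = \be_i$; since $Z_2$ has no module part, $I_0^{\LL_p}(Z_2) = I_1^{\LL_p}(Z_2) = 0$ and the disjointness argument gives nothing. The correct argument is different: a negative $i$th entry of $g_\LL(Z_1)$ forces $S_i$ into the generic socle of $\pi Z_1$, so the $i$th entry of $\bd_1 := \dimv(\pi Z_1)$ is positive, while Lemma~\ref{negsimple} gives $E_\LL(Z_1,Z_2)$ equal to precisely that entry, contradicting the edge between $Z_1$ and $Z_2$. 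The paper's proof is exactly this two-case analysis on a single pair $Z_1,Z_2$, with the trivial case (both $Z_1,Z_2$ negative simples) disposed of separately.
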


\begin{proof}
Assume that $\{ g_\LL(Z) \mid Z \in \cU\}$
is not sign-coherent.
Thus there are $Z_1,Z_2 \in \cU$ such that the set $\{ g_\LL(Z_1),g_\LL(Z_2) \}$ is not sign-coherent.
Since $\cU$ is a component cluster, we know from Theorem~\ref{decomp2} that $Z := \overline{Z_1 \oplus Z_2}$
is a strongly reduced component.
By Lemma~\ref{gadditive} we have $g_\LL(Z) = g_\LL(Z_1) + g_\LL(Z_2)$.
By Lemma~\ref{reduction}(ii)
there is some $p$ such that $Z,Z_1,Z_2 \in \decirr_{<p}^\sr(\LL_p)$.
We also know that $g_{\LL_p}(Z) = g_\LL(Z)$ and
$g_{\LL_p}(Z_i) = g_\LL(Z_i)$ for $i=1,2$, and that  
$$
I_0^{\LL_p}(Z) = I_0^{\LL_p}(Z_1) + I_0^{\LL_p}(Z_2)
\text{\;\;\; and \;\;\;} 
I_1^{\LL_p}(Z) = I_1^{\LL_p}(Z_1) + I_1^{\LL_p}(Z_2).
$$
For $i=1,2$ let $(\bd_i,\bv_i) := \dimv(Z_i)$.

We first assume that $\bv_1 = \bv_2 = 0$.
Since $\{ g_\LL(Z_1),g_\LL(Z_2)\}$ is not sign-coherent, 
we get from Lemma~\ref{lemma3.3} that
$$
\add(I_0^{\LL_p}(Z_1)) \cap \add(I_1^{\LL_p}(Z_2)) \not= 0
\text{\;\;\; or \;\;\;}
\add(I_1^{\LL_p}(Z_1)) \cap \add(I_0^{\LL_p}(Z_2)) \not= 0,
$$
a contradiction to Theorem~\ref{plamondon1}(ii).

Next, assume that $\bv_1$ and $\bv_2$ are both non-zero.
The components $Z_1$ and $Z_2$ are indecomposable.
It follows that $Z_1$ and $Z_2$ are just the orbits of
some negative simple representations.
But then $\{ g_\LL(Z_1),g_\LL(Z_2)\}$ has to be sign-coherent,
a contradiction.

Finally, let $\bv_1 = 0$ and $\bv_2 \not= 0$.
Thus we get $Z_2 = \cO(\cS_i^-)$ for some $1 \le i \le n$.
This implies $g_\LL(Z_2) = \be_i$.
Since $\{ g_\LL(Z_1),g_\LL(Z_2) \}$ is not sign-coherent,
the $i$th entry of $g_\LL(Z_1)$ has to be negative.
It follows that the socle of each representation
in $Z_1$ has $S_i$ as a composition factor.
In particular, the $i$th entry $d_i$ of $\bd_1$ is non-zero.
But we also have $E_\LL(Z_1,Z_2) = 0$.
Now Lemma~\ref{negsimple} implies that $d_i = 0$, a contradiction.
\end{proof}


\section{Examples}\label{sec9}


\subsection{Strongly reduced components for hereditary algebras}\label{srhereditary}

\subsubsection{}
Assume that $\LL = \CQ{Q}$ with $Q$ an acyclic quiver.
Thus $\LL$ is equal to the ordinary path
algebra $\C Q$.
Clearly, for each $(\bd,\bv) \in \N^n \times \N^n$
the variety $\decrep_{\bd,\bv}(\LL)$ is an affine space.
In particular, it has just one irreducible component,
namely $Z_{\bd,\bv} := \decrep_{\bd,\bv}(\LL)$.

\begin{Lem}\label{schofield1}
The following hold:
\begin{itemize}

\item[(i)]
For irreducible components 
$Z_{\bd_1,0},Z_{\bd_2,0} \in \decirr(\LL)$ we have 
$$
\gExt_\LL^1(Z_{\bd_1,0},Z_{\bd_2,0}) = 
E_\LL(Z_{\bd_1,0},Z_{\bd_2,0}).
$$

\item[(ii)]
$Z_{\bd,\bv}$ is strongly reduced
if and only if $d_iv_i = 0$ for all $1 \le i \le n$.

\end{itemize}
\end{Lem}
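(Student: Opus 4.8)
The main tool will be the Euler form of the hereditary algebra $\LL$: since $Q$ is acyclic, $\CQ{Q}$ is finite-dimensional and coincides with the ordinary path algebra $\C Q$, so $\gldim(\LL)\le 1$ and
$$
\dim\Hom_\LL(M,N)-\dim\Ext_\LL^1(M,N)=\langle\dimv(M),\dimv(N)\rangle
$$
for all $M,N\in\rep(\LL)$, where $\langle\ba,\bb\rangle:=\sum_{i=1}^n a_ib_i-\sum_{a\in Q_1}a_{s(a)}b_{t(a)}$. For (i) the plan is to prove the \emph{pointwise} identity $E_\LL(\cM,\cN)=\dim\Ext_\LL^1(M,N)$ for $\cM=(M,0)$, $\cN=(N,0)$ and then pass to generic values. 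Applying the Euler form to $S_i$ (of dimension vector $\be_i$) gives $g_i((N,0))=-\langle\be_i,\dimv(N)\rangle$, hence $\sum_i d_{1,i}g_i((N,0))=-\langle\bd_1,\bd_2\rangle$ when $\dimv(M)=\bd_1$ and $\dimv(N)=\bd_2$; substituting into the definition of the $E$-invariant and using the Euler form again yields $E_\LL(\cM,\cN)=\dim\Hom_\LL(M,N)-\langle\bd_1,\bd_2\rangle=\dim\Ext_\LL^1(M,N)$. Since $Z_{\bd_1,0}\times Z_{\bd_2,0}$ is irreducible and both sides attain their generic value on a dense open subset by upper semicontinuity (cf. Lemma~\ref{open}), the generic value of the left side is $E_\LL(Z_{\bd_1,0},Z_{\bd_2,0})$ and that of the right side is $\gExt_\LL^1(Z_{\bd_1,0},Z_{\bd_2,0})$, so (i) follows.

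For (ii), the forward implication is immediate: if $Z_{\bd,\bv}$ is strongly reduced then $\decirr_{\bd,\bv}^\sr(\LL)\ne\varnothing$, so $d_iv_i=0$ for all $i$ by Corollary~\ref{cor5.5} (note that $Z_{\bd,\bv}$ is the unique irreducible component of $\decrep_{\bd,\bv}(\LL)$). For the converse, assume $d_iv_i=0$ for all $i$ and observe that $Z_{\bd,\bv}=\overline{Z_{\bd,0}\oplus Z_{0,\bv}}$, since $\decrep_{\bd,\bv}(\LL)$ is already the closure of the locus of decorated representations $(M\oplus 0,\C^\bv)$. I then want to invoke Theorem~\ref{decomp2}, so I must check that $Z_{\bd,0}$ and $Z_{0,\bv}$ are strongly reduced and that $E_\LL(Z_{\bd,0},Z_{0,\bv})=E_\LL(Z_{0,\bv},Z_{\bd,0})=0$. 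The component $Z_{0,\bv}$ is the single point $\cO(0,\C^\bv)$, which is strongly reduced as recalled in Section~\ref{sec5}; writing $(0,\C^\bv)=\bigoplus_i(\cS_i^-)^{v_i}$ and using additivity of $E_\LL$ in each argument together with Lemma~\ref{negsimple} gives $E_\LL(Z_{\bd,0},Z_{0,\bv})=\sum_i v_id_i=0$ and $E_\LL(Z_{0,\bv},Z_{\bd,0})=0$.

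It remains to show that $Z_{\bd,0}$ itself is strongly reduced, which is the one step requiring a genuine computation and where I expect the (mild) difficulty. I would compute $\dim(Z_{\bd,0})-\dim(G_\bd)=\sum_{a\in Q_1}d_{s(a)}d_{t(a)}-\sum_i d_i^2=-\langle\bd,\bd\rangle$, and use the Euler form in the shape $\dim\End_\LL(M)=\langle\bd,\bd\rangle+\dim\Ext_\LL^1(M,M)$ for \emph{every} $M\in\rep_\bd(\LL)$, so that $\dim\End_\LL(-)$ and $\dim\Ext_\LL^1(-,-)$ are minimized on a common dense open subset and hence $\gEnd_\LL(Z_{\bd,0})=\langle\bd,\bd\rangle+e_\LL(Z_{\bd,0})$. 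Combining this with $c_\LL(Z_{\bd,0})=\dim(Z_{\bd,0})-\dim(G_\bd)+\gEnd_\LL(Z_{\bd,0})$ gives $c_\LL(Z_{\bd,0})=e_\LL(Z_{\bd,0})$, while the pointwise identity of (i) with $M_1=M_2$ together with upper semicontinuity gives $E_\LL(Z_{\bd,0})=e_\LL(Z_{\bd,0})$; by Lemma~\ref{lemmaineq} all three quantities coincide. Thus $Z_{\bd,0}$ is strongly reduced, and Theorem~\ref{decomp2} then shows $Z_{\bd,\bv}=\overline{Z_{\bd,0}\oplus Z_{0,\bv}}$ is strongly reduced, completing (ii). No deeper obstacle is anticipated; the only care needed throughout is to keep track of the fact that for an irreducible variety the generic value of an upper semicontinuous homological invariant equals its minimum.
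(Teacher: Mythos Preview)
Your proof is correct, but it differs from the paper's in both parts.

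For (i), the paper invokes the Auslander--Reiten formula $\dim\Ext_\LL^1(M,N)=\dim\Hom_\LL(\tau_\LL^-(N),M)$, valid for hereditary algebras, and then cites Proposition~\ref{HomologicalE2} to identify the right-hand side with $E_\LL((M,0),(N,0))$. Your route via the Euler form is more elementary and entirely self-contained: it avoids Auslander--Reiten theory and the truncation machinery of Proposition~\ref{HomologicalE2}, at the modest cost of a short bilinear-form computation. Either way one obtains the same pointwise identity and then passes to generic values.

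For (ii), the paper's argument is shorter. It gets $c_\LL(Z_{\bd,0})=e_\LL(Z_{\bd,0})$ in one line from Voigt's Lemma (since $Z_{\bd,0}$ is an affine space), rather than by your explicit Euler-form computation --- though your computation is of course just Voigt's Lemma unwound in this case. For the converse direction, the paper does not invoke Theorem~\ref{decomp2}; instead it implicitly uses the identity $E_\LL(Z_{\bd,\bv})=E_\LL(Z_{\bd,0})+\sum_i d_iv_i$ from the proof of Corollary~\ref{cor5.5}, together with $c_\LL(Z_{\bd,\bv})=c_\LL(Z_{\bd,0})$, to conclude directly that $Z_{\bd,\bv}$ is strongly reduced when $\sum_i d_iv_i=0$. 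Your use of Theorem~\ref{decomp2} is a heavier tool than needed here, but it is perfectly valid and has the virtue of making the decomposition $Z_{\bd,\bv}=\overline{Z_{\bd,0}\oplus Z_{0,\bv}}$ explicit.
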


\begin{proof}
Since $\LL$ is a finite-dimensional hereditary algebra,
we have 
$$
\dim \Ext_\LL^1(M,N)  = 
\dim \Hom_\LL(\tau_\LL^-(N),M)
$$ 
for all $M,N \in \rep(\LL)$.
Now Proposition~\ref{HomologicalE2} implies (i).
In particular, for $Z = Z_{\bd,0}$ we have $e_\LL(Z) = E_\LL(Z)$.
Since $Z = \decrep_{\bd,0}(\LL)$ is an affine space, 
Voigt's Lemma implies that $c_\LL(Z) = e_\LL(Z)$.
Thus $Z$ is strongly reduced. 
The components
$Z_{0,\be_i}$ are obviously also strongly reduced.
Now Corollary~\ref{cor5.5} yields (ii).
\end{proof}

The following result is a direct consequence of Lemma~\ref{schofield1} and
Schofield's \cite{Scho} ground breaking work on general representations of acyclic quivers.
For all unexplained terminology we refer to \cite{Scho}.

\begin{Prop}\label{schofield2}
Let $\LL = \CQ{Q}$ with $Q$ an acyclic quiver.
Then
the indecomposable strongly reduced components
are the components
$Z_{\bd,0}$, where $\bd$ is a Schur root, and the components
$Z_{0,\be_1},\ldots,Z_{0,\be_n}$, where
$\be_i$ is the $i$th standard basis vector of $\Z^n$.
\end{Prop}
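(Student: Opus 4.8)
The plan is to combine Lemma~\ref{schofield1}, Theorem~\ref{decomp1} (or rather its strongly reduced analogue Theorem~\ref{decomp2}), and Schofield's theory of general representations of acyclic quivers. By Theorem~\ref{decomp1} together with Lemma~\ref{schofield1}(i), every irreducible component of $\decrep(\LL)$ decomposes as a direct sum of indecomposable components, and the indecomposable ones among the $Z_{\bd,0}$ are precisely those for which the generic representation of dimension vector $\bd$ is indecomposable — that is, $\bd$ is a Schur root, by Schofield \cite{Scho}. The components $Z_{0,\be_i}$ are clearly indecomposable (a single negative simple). So the only real content is to match up the ``indecomposable'' condition with the ``Schur root'' condition and check that strong reducedness imposes no further restriction beyond what Lemma~\ref{schofield1}(ii) already says.

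Concretely, first I would dispose of the vertices with a nonzero $\bv$: if $\bv \neq 0$ and $\bd \neq 0$ then by Lemma~\ref{schofield1}(ii) (equivalently Corollary~\ref{cor5.5}) $Z_{\bd,\bv}$ is not strongly reduced, so the only strongly reduced components with $\bv \neq 0$ have $\bd = 0$, and among those the indecomposable ones are exactly the $Z_{0,\be_i}$. Second, for $\bv = 0$: every $Z_{\bd,0}$ is strongly reduced by Lemma~\ref{schofield1}(ii), and it is indecomposable (in the sense of Section~\ref{sec5}, i.e.\ a dense open subset consists of indecomposable representations) if and only if the generic representation of $\rep_{\bd}(\LL)$ is indecomposable. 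By Schofield's results this holds precisely when $\bd$ is a Schur root. (For the converse direction within the decomposition: if $\bd$ is not a Schur root, the canonical decomposition of $\bd$ into Schur roots gives $Z_{\bd,0} = \overline{Z_{\bd_1,0} \oplus \cdots \oplus Z_{\bd_t,0}}$ with $t \geq 2$, using that $\decrep_{\bd,0}(\LL)$ is an affine space and Lemma~\ref{schofield1}(i) to see the $\gExt$-vanishing matches the canonical decomposition.) Third, invoke Theorem~\ref{decomp2} (or Theorem~\ref{decomp1}) to conclude that this list exhausts the indecomposable strongly reduced components: any $Z \in \decirr^\sr(\LL)$ is $Z_{\bd,\bv}$ for a unique $(\bd,\bv)$, and its decomposition into indecomposables is forced to be built from the components just described.

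The main obstacle — really the only place where something nontrivial is imported rather than checked — is the identification of indecomposable components of $\rep_\bd(\C Q)$ with Schur roots, and more precisely the compatibility of the direct-sum decomposition $Z = \overline{Z_1 \oplus \cdots \oplus Z_t}$ with Schofield's canonical decomposition of a dimension vector. This is exactly the dictionary between the $\gExt^1$-orthogonality criterion of Theorem~\ref{decomp1} and Schofield's criterion for the canonical decomposition (general $\Ext^1$ vanishing between the generic summands), and it is why the statement says ``a direct consequence of Lemma~\ref{schofield1} and Schofield's work'': once Lemma~\ref{schofield1}(i) tells us $E_\LL$ and $\gExt_\LL^1$ agree on the relevant components, the decomposition theorem for strongly reduced components becomes literally the canonical decomposition, and Schur roots appear as the indecomposable constituents. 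I would therefore keep the proof short: state the reduction to the $\bv = 0$ case, cite Lemma~\ref{schofield1} for strong reducedness and for the $E = \gExt^1$ identification, cite Schofield for Schur roots $=$ indecomposable components, and cite Theorem~\ref{decomp1} (resp.\ \ref{decomp2}) to assemble everything, without grinding through the representation-theoretic details of the canonical decomposition.
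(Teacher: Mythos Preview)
Your approach is correct and matches the paper's: the paper gives no detailed proof at all, stating only that the result is ``a direct consequence of Lemma~\ref{schofield1} and Schofield's work,'' which is precisely the combination you invoke and flesh out. One small correction: your claim that $\bd \neq 0$ and $\bv \neq 0$ forces $Z_{\bd,\bv}$ to fail strong reducedness is not quite right (take $\bd$ and $\bv$ with disjoint supports; then $d_iv_i = 0$ for all $i$ and Lemma~\ref{schofield1}(ii) says it \emph{is} strongly reduced) --- the correct reduction is the observation from Section~5 that any indecomposable component has $\bd = 0$ or $\bv = 0$, after which your case analysis goes through verbatim.
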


For a finite-dimensional path algebra $\LL = \C Q$ one can use 
Schofield's algorithm \cite{Scho} (see also \cite{DW} for a more efficient version of the algorithm) to determine the canonical decomposition of a dimension vector, and one can also use it to
decide if $\gExt_\LL^1(Z_1,Z_2)$ is zero or not.
So at least in principle, the graph $\Gamma(\decirr^\sr(\LL))$ can
be computed.
However, even in this case there are numerous interesting open questions on the structure of the graph 
$\Gamma(\decirr^\sr(\LL))$, see \cite{Sche}.

\subsection{Strongly reduced components for 
1-vertex algebras}

\begin{Prop}
Let $\LL = \CQ{Q}/I$ be a basic algebra with $n=1$.
Then the following hold:
\begin{itemize}

\item[(i)]
If $\overline{\LL}$ is finite-dimensional, then 
the indecomposable strongly reduced components
in $\decirr(\LL)$ are
$\cO(\cS_1^-)$ and the closure of $\cO((\overline{I}_1,0))$, where
$\overline{I}_1$ is 
the injective envelope of the simple $\overline{\LL}$-module $S_1$.

\item[(ii)]
If $\overline{\LL}$ is infinite-dimensional, then 
the only indecomposable strongly reduced component 
in $\decirr(\LL)$ is
$\cO(\cS_1^-)$.

\end{itemize}
\end{Prop}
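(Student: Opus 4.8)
The plan is to reduce to two cases according to the type of the dimension vector, to pin down the generic $g$-vector of each candidate component, and then to play these $g$-vectors off against the parametrization theorem.

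First I would observe that an indecomposable $Z\in\decirr_{\bd,\bv}(\LL)$ has $\bd=0$ or $\bv=0$; since $n=1$, here $\bd$ and $\bv$ are non-negative integers. If $\bd=0$, then $\bv\neq 0$ and indecomposability forces $\bv=1$, so $Z=\cO(\cS_1^-)$. Its orbit is a single point equal to its closure, it is strongly reduced as remarked after the definition of strongly reduced components, and $g_\LL(\cS_1^-)=1$. It therefore remains to treat the case $\bv=0$, $\bd\geq 1$, that is, indecomposable strongly reduced components $Z$ supported on the representation side, and to show that each such $Z$ satisfies $g_\LL(Z)=-1$ and equals $\overline{\cO((\overline{I}_1,0))}$ (which exists only when $\overline{\LL}$ is finite-dimensional). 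To this end, pick $p>\bd$; by Lemma~\ref{truncsr} we have $Z\in\decirr^\sr(\LL_p)$, and $\LL_p$ is finite-dimensional with a unique simple module, hence a unique indecomposable injective $I_{1,p}$, and every injective $\LL_p$-module is a direct sum of copies of $I_{1,p}$. By Plamondon's Theorem~\ref{plamondon1}(ii), $\add(I_0^{\LL_p}(Z))\cap\add(I_1^{\LL_p}(Z))=0$, so one of $I_0^{\LL_p}(Z)$, $I_1^{\LL_p}(Z)$ is zero; since $\bd\geq 1$ the injective envelope $I_0^{\LL_p}(M)$ of a generic $M\in Z$ is non-zero, so $I_1^{\LL_p}(Z)=0$, i.e. the generic $M$ is an injective $\LL_p$-module, and by indecomposability $M\cong I_{1,p}$, so $Z=\overline{\cO((I_{1,p},0))}$. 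Then Lemma~\ref{lemma3.3} gives $g_\LL(Z)=g_{\LL_p}(Z)=-[\soc(I_{1,p}):S_1]=-1$, as the socle of the indecomposable injective $I_{1,p}$ is $S_1$.

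Now I would split into the two cases. If $\overline{\LL}$ is infinite-dimensional, then $\dim(\LL_p)=\dim(I_{1,p})\to\infty$, so $\overline{I}_1=\bigcup_p I_{1,p}$ is infinite-dimensional, and by the argument used in the proof of Theorem~\ref{parametrization} the vector $-1=-\be_1$ lies outside the image of $G_\LL^\sr$. Since every representation-side indecomposable strongly reduced component has generic $g$-vector $-1$, there are none, and $\cO(\cS_1^-)$ is the only indecomposable strongly reduced component, proving (ii). If $\overline{\LL}$ is finite-dimensional, then for large $p$ we have $\LL_p=\overline{\LL}$ and $I_{1,p}=\overline{I}_1$, a finite-dimensional injective $\overline{\LL}$-module; using $\decirr^\sr(\LL)=\decirr^\sr(\overline{\LL})$ from Lemma~\ref{reduction}(iii), the set $\overline{\cO((\overline{I}_1,0))}$ is a strongly reduced component (closure of an orbit of an injective module over a finite-dimensional algebra), it is indecomposable, and it has $g$-vector $-1$, hence is distinct from $\cO(\cS_1^-)$. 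Since $G_\LL^\sr$ is injective (Theorem~\ref{parametrization}(i)) and every representation-side indecomposable strongly reduced component has $g$-vector $-1$, this is the unique one, proving (i).

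The main obstacle is the middle step: identifying the generic representation of a representation-side indecomposable strongly reduced component with the indecomposable injective. This rests essentially on Plamondon's structural statement Theorem~\ref{plamondon1}(ii) together with the one-vertex peculiarity that there is a single indecomposable injective (over each truncation $\LL_p$); once this is secured, the remainder is bookkeeping with the truncations $\LL_p$, Lemma~\ref{lemma3.3}, and the injectivity/surjectivity dichotomy of Theorem~\ref{parametrization}.
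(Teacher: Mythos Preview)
Your argument is correct. The approach differs from the paper's in one notable way for part (i): the paper argues by exhibiting, for every $m\in\Z$, an explicit $E$-rigid strongly reduced component with $g$-vector $m$ (namely the orbit closures of $(\cS_1^-)^m$ and $(\overline{I}_1,0)^m$), and then invokes the \emph{bijectivity} of $G_\LL^\sr$ from Theorem~\ref{parametrization} to conclude that these are all strongly reduced components; the indecomposable ones are then read off. You instead restrict to indecomposable components from the outset and use Plamondon's structural statement Theorem~\ref{plamondon1}(ii) together with the one-vertex fact that $\LL_p$ has a single indecomposable injective to pin down the generic module of any representation-side indecomposable strongly reduced component as $I_{1,p}$; you then need only the \emph{injectivity} of $G_\LL^\sr$. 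Your route yields slightly more: it identifies the generic module explicitly rather than merely matching $g$-vectors, at the cost of invoking Theorem~\ref{plamondon1}(ii) directly (which the paper's proof of this proposition does not). For part (ii) both arguments reduce to the proof of Theorem~\ref{parametrization}(ii).
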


\begin{proof}
Assume that $\overline{\LL}$ is
finite-dimensional.
Then Theorem~\ref{parametrization} implies that
$\Ima(G_\LL^\sr) = \Z$. 
For $m \ge 0$, we know
that the orbit closures of $(\cS_1^-)^m$ and $(\overline{I}_1,0)^m$
are $E$-rigid strongly reduced components with
generic $g$-vectors $m\be_1$ and $-m\be_1$, respectively.
This implies (i).
Part (ii) follows from the proof of Theorem~\ref{parametrization}(ii).
\end{proof}

\subsection{Strongly reduced components for some
representation-finite algebras}
By Corollary~\ref{cor6.5} each vertex of the component graph of a representation-finite basic algebra has a loop.
In the following examples,
for each $E$-rigid indecomposable
strongly reduced component, we just display the indecomposable decorated representation whose orbit closure
is the component.
We describe representations by
displaying their socle series and their composition factors.
For $1 \le i \le n$ we write $i$ and $-i$ instead of $\cS_i$ and $\cS_i^-$, respectively.
For a decorated representation of the form $\cM = (M,0)$
we just display $M$.

\subsubsection{}\label{9.3.1}
Let $Q$ be the quiver
$$
\xymatrix{
1 & 2 \ar[l]_b \ar@(ur,dr)[]^a
}
$$
and let $\LL := \CQ{Q}/I$, where $I$ is generated
by $ba$.
Then $\Gamma(\decirr^\sr(\LL))$ looks as follows:
$$
\xymatrix@-1.5pc{
&&\\
&{\bsm 2\\1 \esm} \ar@{-}[rr]\ar@{-}@(u,l)[] && {\bsm 1 \esm}\ar@{-}@(u,r)[]
\ar@{-}[dd]\\
\\
&{\bsm -1 \esm} \ar@{-}[rr] \ar@{-}@(l,d)[]&& {\bsm -2 \esm}
\ar@{-}@(r,d)[]\\
&&
}
$$
For $p=2$, the component graph 
$\Gamma(\decirr^\sr(\LL_p))$ looks as follows:
$$
\xymatrix@-1.5pc{
&&\\
&{\bsm 2\\1 \esm}\ar@{-}@(u,l)[] \ar@{-}[rr] && {\bsm 1 \esm}\ar@{-}@(u,r)[]
\ar@{-}[dd]\\
{\bsm 2\\2 \esm}\ar@{-}@(ul,dl)[] \ar@{-}[ur]\ar@{-}[dr]\\
&{\bsm -1 \esm} \ar@{-}@(d,l)[]\ar@{-}[rr] && {\bsm -2 \esm}
\ar@{-}@(d,r)[]\\
&&
}
$$
To repair the somewhat non-symmetric graph
$\Gamma(\decirr^\sr(\LL))$ one could insert a vertex for the infinite-dimensional indecomposable injective $\LL$-module $I_2$.
Such aspects will be dealt with elsewhere.

\subsubsection{}
Let $Q$ be the quiver
$$
\xymatrix{
1 & 2 \ar[l]_b & 3 \ar[l]_a
}
$$
and let $\LL := \CQ{Q}/I$, where $I$ is generated
by $ba$.
Then $\Gamma(\decirr^\sr(\LL))$ looks as follows:
$$
\xymatrix@-1.5pc{
&&\\
&&&&{\bsm -3 \esm} \ar@{-}[ddll] \ar@{-}[ddrr]
\ar@{-}@/^2.2pc/[ddddrrrr]\ar@{-}@/_2.2pc/[ddddllll]
\ar@{-}[dddddddd]\ar@{-}@(ul,ur)[]\\
&&\\
&& {\bsm 1 \esm}\ar@{-}[dddd] \ar@{-}@(u,l)[]\ar@{-}[rrrr]&&&&{\bsm 2\\1 \esm}\ar@{-}[dddd]\ar@{-}[ddddllll]\ar@{-}@(u,r)[]\\
&&\\
{\bsm -2 \esm} \ar@{-}@(ul,dl)[] \ar@{-}[uurr]\ar@{-}[ddrr]&&&&&&&&{\bsm 2 \esm}\ar@{-}[ddll]\ar@{-}[uull]\ar@{-}@(ur,dr)[] 
\\
&&\\
&& {\bsm 3 \esm}\ar@{-}@(d,l)[]\ar@{-}[rrrr] &&&&{\bsm 3\\2 \esm}
\ar@{-}@(d,r)[]\\
&&\\
&&&&{\bsm -1 \esm} \ar@{-}[uurr]\ar@{-}@(dl,dr)[]
\ar@{-}[uull] \ar@{-}@/_2.2pc/[uuuurrrr]\ar@{-}@/^2.2pc/[uuuullll]
\\
&&
}
$$
Note that for $M = \bsm 3\\2 \esm$ and 
$N = \bsm 1 \esm$ and we have $\Ext_\LL^1(M,N) = 0$  but $E_\LL((M,0),(N,0)) = \Hom_\LL(\tau^-(N),M) \not= 0$.

\subsection{Examples of Caldero-Chapoton algebras}

\subsubsection{}
Let $Q$ be the quiver
$$
\xymatrix{
1 \ar@(dr,ur)
}
$$
and let $\LL := \CQ{Q}$.
The skew-symmetric adjacency matrix $B_Q$ of $Q$ is
just $(0)$.
Up to isomorphism, for each $d \ge 1$ there
is a unique indecomposable representation $M_d$ of $\LL$
with $\dim(M_d) = d$, given by a nilpotent Jordan block of size $d$.
One easily checks that
$$
C_\LL(M_d) = d+1.
$$
This implies
$\cA_\LL = \C$.

For $p \ge 2$ the indecomposable representations
of the $p$-truncation $\LL_p$ are
$M_1,\ldots,M_p$, and we get
$$
C_{\LL_p}(M_d) =
\begin{cases}
(d+1)x_1^{-1} & \text{if $d=p$},\\
(d+1) & \text{otherwise}.
\end{cases}
$$
This implies
$\cA_{\LL_p} = \C[x_1^{-1}]$.

\subsubsection{}\label{a3example}
In this section, 
let $Q$ be the quiver 
$$
\xymatrix{1\ar^a[r]&2\ar^b[r]&3\ar@(dr,ur)_c}
$$
and let $\LL := \CQ{Q}/I$, where $I$ is the ideal generated by
$c^2$. 
(This example is
closely related to Zhou and Zhu's \cite{ZZ} study of cluster tubes.
We thank the anonymous referee for pointing this out.
The considerations in this section 
also hold for the corresponding more general case of a linear quiver with $n$
vertices and a loop $c$ with $c^2 = 0$. 
This and also a comparison with the results in \cite{ZZ} will be carried out in a
separate publication.)

The basic algebra $\LL$ is a representation-finite string algebra, and its Auslander-Reiten quiver looks as follows:
(Recall that there is an arrow from $M$ to $N$ if and only if there
is an irreducible homomorphism from $M$ to $N$,
and for all non-projective $M$ we draw a dashed arrow
from $M$ to its Auslander-Reiten translate $\tau_\LL(M)$. )
$$
\xymatrix@R=6pt{
&&&*+[F]{\bsm 1\\2\\3\\3 \esm}
\ar[dr]
&&*+[F]{\bsm 2 \esm}\ar[dr]\ar@{-->}[ll]&&*+[F]{\bsm 1 \esm}\ar@{-->}[ll]\\
&&*+[F]{\bsm 2\\3\\3 \esm}\ar[ur]\ar[dr]&&*+[F]{\bsm 1 && \\2 &&\\3 && 2\\& 3 \esm}\ar[ur]\ar[dr]\ar@{-->}[ll]&&*+[F]{\bsm 1\\2 \esm}\ar[ur]\ar@{-->}[ll]&&\\
&*+[F]{\bsm 3\\3 \esm}\ar[dr]\ar[ur]&&*+[F]{\bsm 2&&\\3&&2\\&3 \esm}\ar@{-->}[ll]\ar[dr]\ar[ur]&&*+[F]{\bsm 1&&\\2&&1\\3&&2\\&3 \esm}\ar@{-->}[ll]\ar[ur]\ar[dr]&&
\\
*+{\bsm 3 \esm}\ar[ur]\ar[dr]&&*+{\bsm 3&&2\\&3 \esm}\ar[dr]\ar[ur]\ar@{-->}[ll]&&*+{\bsm 2&&1\\3&&2\\&3 \esm}\ar[ur]\ar[dr]\ar@{-->}[ll]&&*+{\bsm 1\\2\\3 \esm}\ar@{-->}[ll]
\\
&*+{\bsm 2\\3 \esm}\ar[dr]\ar[ur]&&*+{\bsm &&1\\3&&2\\&3 \esm}\ar@{-->}[ll]\ar[dr]\ar[ur]&&*+{\bsm 2\\3 \esm}\ar[ur]\ar@{-->}[ll]&&\\
&&*+{\bsm 1\\2\\3 \esm}\ar[ur]&&*+{\bsm 3 \esm}\ar[ur]\ar@{-->}[ll]&&&&
}
$$ 
In this quiver the two south-west  and the two south-east edges are identified. 
The framed representations are the indecomposable $E$-rigid 
representations of $\LL$.
We have
\begin{align*}
I_1 &= {\bsm 1 \esm}, &
I_2 &= {\bsm 1\\2 \esm}, &
I_3 &= {\bsm 1\\2&&1\\3&&2\\&3\esm}.
\end{align*}

We now describe explicitly the Caldero-Chapoton functions associated to the 12 indecomposable $E$-rigid decorated representations of $\LL$. 
By definition $C_\Lambda (\cS_i^-)=x_i$, for $i=1,2,3$. 
The remaining 9 functions are 
\begin{align*}
C_\Lambda({\bsm 1 \esm}) &= {\dfrac{1+x_2}{x_1}}, &
C_\Lambda\left({\bsm 1\\2\esm}\right)&={\dfrac{x_1+x_3+x_2x_3}{x_1x_2}},\\
C_\Lambda\left({\bsm 1\\2\\3\\3\esm}\right)&={\dfrac{x_1x_2^2+x_1x_2+x_1+x_3+x_2x_3}{x_1x_2x_3}}, &
C_\Lambda({\bsm 2 \esm})&={\dfrac{x_1+x_3}{x_2}},\\
C_\Lambda\left({\bsm 3\\3\esm}\right)&={\dfrac{x_2^2+x_2+1}{x_3}},&
C_\Lambda\left({\bsm 2\\3\\3\esm}\right)&=
{\dfrac{x_1x_2^2+x_1x_2+x_1+x_3}{x_2x_3}},
\end{align*}
\begin{align*}
C_\Lambda\left({\bsm 1\\2\\3&&2\\&3\esm}\right)&=
\dfrac{x_1^2x_2^2+x_1^2x_2+x_1x_2x_3+2x_1x_3+x_1^2+x_1x_2x_3+x_2x_3^2+x_3^2}{x_1x_2^2x_3},\\
C_\Lambda\left({\bsm 1\\2&&1\\3&&2\\&3\esm}\right)&=\dfrac{x_1^2x_2^2+x_1^2x_2+x_1^2+x_1x_2x_3+2x_1x_3+x_3^2+x_1x_2^2x_3+2x_1x_2x_3+2x_2x_3^2+x_2^2x_3^2}{x_1^2x_2^2x_3},\\
C_\Lambda\left({\bsm 2\\3&&2\\&3\esm}\right)&=
\dfrac{x_1^2x_2^2+x_1^2x_2+x_1^2+x_1x_2x_3+2x_1x_3+x_3^2}{x_2^2x_3}.
\end{align*}
The Caldero-Chapoton functions associated to the 6 indecomposable non-$E$-rigid 
representations of $\Lambda$ are
\begin{align*}
C_\Lambda({\bsm 3\esm})&=x_2+1,\\
C_\Lambda\left({\bsm 2\\3\esm}\right)&=\dfrac{x_1x_2+x_1+x_3}{x_2},\\
C_\Lambda\left({\bsm 3&&2\\&3\esm}\right)&=\dfrac{x_1x_2^2+x_1x_2+x_1+x_3+x_2x_3}{x_2x_3},\\
C_\Lambda\left({\bsm 1\\2\\3\esm}\right)&=\dfrac{x_1x_2+x_1+x_3+x_2x_3}{x_1x_2},\\
C_\Lambda\left({\bsm &&1\\3&&2\\&3\esm}\right)&=\dfrac{x_1x_2^2+x_1x_2+x_1+x_2x_3+x_2^2x_3+x_2x_3+x_3}{x_1x_2x_3},\\
C_\Lambda\left({\bsm 2&&1\\3&&2\\&3\esm}\right)&=\dfrac{x_1^2x_2^2+x_1^2x_2+x_1^2+x_1x_2x_3+2x_1x_3+x_3^2+x_1x_2^2x_3+x_1x_2x_3+x_2x_3^2}{x_1x_2^2x_3}.
\end{align*}
The following statement says that in our example, 
there is a positive answer to Problem~\ref{conj2}.

\begin{Prop}\label{claim9.4}
The set
\[
\{ C_\LL(Z) \mid Z \in \decirr^\sr(\LL),\, 
E_\LL(Z) = 0 \}
\]
generates the Caldero-Chapoton algebra $\cA_\Lambda$ as a $\C$-algebra. 
\end{Prop}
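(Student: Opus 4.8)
The plan is to reduce Proposition~\ref{claim9.4} to a finite comparison of the explicit Laurent polynomials listed above, exploiting that $\LL$ is representation-finite. First I would record that, by Lemma~\ref{alggen} together with Lemma~\ref{additive}(iii) and the Krull--Remak--Schmidt decomposition of decorated representations, the algebra $\cA_\LL$ is generated as a $\C$-algebra by the \emph{finitely many} functions $C_\LL(\cM)$ with $\cM$ an indecomposable decorated representation of $\LL$. By the Auslander--Reiten quiver displayed above these are the functions $C_\LL(M)$ with $M$ an indecomposable representation of $\LL$ ($15$ of them), together with $C_\LL(\cS_1^-)=x_1$, $C_\LL(\cS_2^-)=x_2$, $C_\LL(\cS_3^-)=x_3$; exactly $12$ of these $18$ generators come from $E$-rigid decorated representations, namely the $9$ framed modules and the $3$ negative simples.

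Next I would identify the generating set appearing in the statement. By Corollary~\ref{cor6.5}(iii) every strongly reduced component is the orbit closure $\overline{\cO(\cM)}$ of a unique $E$-rigid decorated representation $\cM$; since $C_\LL$ and $E_\LL$ are isomorphism invariants one gets $C_\LL(\overline{\cO(\cM)})=C_\LL(\cM)$ and $E_\LL(\overline{\cO(\cM)})=E_\LL(\cM)=0$, so the set in the statement equals $\cB_\LL=\{C_\LL(\cM)\mid \cM\ \text{$E$-rigid}\}$. Applying Lemma~\ref{additive}(iii) once more, the $\C$-subalgebra $\cA'\subseteq\cA_\LL$ generated by this set coincides with the $\C$-subalgebra generated by the $12$ functions attached to the \emph{indecomposable} $E$-rigid decorated representations; in particular $1,x_1,x_2,x_3\in\cA'$.

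It therefore remains to show that the $6$ functions $C_\LL(M)$ attached to the indecomposable non-$E$-rigid representations lie in $\cA'$. This I would do by directly comparing the formulas listed above: clearing denominators and comparing coefficients, one verifies the identities
\begin{align*}
C_\LL\left({\bsm 3\esm}\right) &= C_\LL(\cS_2^-)+1, &
C_\LL\left({\bsm 2\\3\esm}\right) &= C_\LL(\cS_1^-)+C_\LL\left({\bsm 2\esm}\right),\\
C_\LL\left({\bsm 1\\2\\3\esm}\right) &= C_\LL\left({\bsm 1\\2\esm}\right)+1, &
C_\LL\left({\bsm 3&&2\\&3\esm}\right) &= C_\LL\left({\bsm 2\\3\\3\esm}\right)+1,\\
C_\LL\left({\bsm &&1\\3&&2\\&3\esm}\right) &= C_\LL\left({\bsm 1\\2\\3\\3\esm}\right)+C_\LL\left({\bsm 1\esm}\right), &
C_\LL\left({\bsm 2&&1\\3&&2\\&3\esm}\right) &= C_\LL\left({\bsm 1\\2\\3&&2\\&3\esm}\right)+1,
\end{align*}
whose right-hand sides all lie in $\cA'$. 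Consequently all $18$ generators of $\cA_\LL$ lie in $\cA'$, so $\cA'=\cA_\LL$, which is the claim. There is no conceptual obstacle here; the only work is the bookkeeping --- assembling the complete list of indecomposables together with their $g$-vectors and quiver-Grassmannian Euler characteristics so that the $C_\LL(M)$ are known explicitly, and then spotting the six elementary cancellations above (each writes a non-$E$-rigid Caldero--Chapoton function as the sum of an $E$-rigid one and an element of $\cA'$ that is either the constant $1$ or itself $E$-rigid). That such cancellations occur at all is special to this algebra, which is exactly why the corresponding general question is only posed as Problem~\ref{conj2}.
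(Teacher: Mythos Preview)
Your proof is correct and follows essentially the same approach as the paper: reduce to the six Caldero--Chapoton functions of indecomposable non-$E$-rigid representations and verify, by direct comparison of the explicit Laurent polynomials, the very same six identities expressing each as a sum of $E$-rigid Caldero--Chapoton functions. Your write-up is slightly more detailed in justifying the reduction step (via Lemma~\ref{alggen}, Lemma~\ref{additive}(iii), and Corollary~\ref{cor6.5}), whereas the paper simply asserts that it suffices to handle these six functions; but the substance is identical.
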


\begin{proof}
It is enough to express the Caldero-Chapoton functions of the 6 indecomposable non-$E$-rigid representations in terms of
Caldero-Chapoton functions of 
$E$-rigid decorated representations.
An easy calculation yields
\begin{align*}
C_\Lambda({\bsm 3\esm})&=x_2+1,\\
C_\Lambda\left({\bsm 2\\3\esm }\right)&=x_1+C_\Lambda({\bsm 2 \esm}),\\
C_\Lambda\left({\bsm 3&&2\\&3\esm}\right)&=C_\Lambda\left({\bsm2\\3\\3\esm}\right)+1\\
C_\Lambda\left({\bsm1\\2\\3\esm}\right)&=
C_\Lambda\left({\bsm 1\\2 \esm}\right)+1,\\
C_\Lambda\left({\bsm &&1\\3&&2\\&3\esm}\right)&=C_\Lambda\left({\bsm 1\\2\\3\\3\esm}\right)+C_\Lambda({\bsm 1 \esm}),\\
C_\Lambda\left({\bsm 2&&1\\3&&2\\&3\esm}\right)&=C_\Lambda\left({\bsm 1\\2\\3&&2\\&3\esm}\right)+1.
\end{align*}
All summands of the right hand side of the above equations
are Caldero-Chapoton functions of $E$-rigid decorated
representations, and the 6 Caldero-Chapoton functions
of the indecomposable non-$E$-rigid representations are
on the left side.
(Note that $x_i = C_\LL(\cS_i^-)$ and $1 = C_\LL(0)$, and
$\cS_i^-$ and $0$ are both $E$-rigid.)
This finishes the proof.
\end{proof}

Since $\LL$ is representation-finite, 
each strongly reduced component contains an $E$-rigid decorated representation.
Each vertex of $\Gamma(\decirr^\sr(\LL))$ has a loop.
Let $\Gamma(\decirr^\sr(\LL))^\circ$ be the graph obtained
by deleting these loops. 
We display $\Gamma(\decirr^\sr(\LL))^\circ$ in the following picture. 
Note that each component cluster is $E$-rigid and contains exactly three
irreducible components.
$$
\begin{xy} 0;<1pt,0pt>:<0pt,-1pt>:: 
(0,0) *+[F]{\bsm 3\\3 \esm} ="0",
(200,30) *+[F]{\bsm 2\\3\\3 \esm} ="1",
(150,60) *+[F]{\bsm -1 \esm} ="2",
(250,60) *+[F][F]{\bsm 2&&\\3&&2\\&3 \esm} ="3",
(200,106) *+[F]{\bsm 2 \esm} ="4",
(160,160) *+[F]{\bsm -3 \esm} ="5",
(80,120) *+[F]{\bsm -2 \esm} ="6",
(240,160) *+[F]{I_2} ="7",
(310,150) *+[F]{I_3} ="8",
(291,90) *+[F][F]{\bsm 1\\2\\3&&2\\&3 \esm} ="9",
(400,0) *+[F]{\bsm 1\\2\\3\\3 \esm} ="10",
(200,250) *+[F]{I_1} ="11",
"1", {\ar@{-}"0"},
"0", {\ar@{-}"2"},
"0", {\ar@{-}"6"},
"0", {\ar@{-}"10"},
"0", {\ar@{-}@/_5pc/"11"},
"2", {\ar@{-}"1"},
"3", {\ar@{-}"1"},
"1", {\ar@{-}"10"},
"2", {\ar@{-}"3"},
"2", {\ar@{-}"4"},
"2", {\ar@{-}"5"},
"2", {\ar@{-}"6"},
"4", {\ar@{-}"3"},
"9", {\ar@{-}"3"},
"3", {\ar@{-}"10"},
"5", {\ar@{-}"4"},
"7", {\ar@{-}"4"},
"4", {\ar@{-}"8"},
"4", {\ar@{-}"9"},
"6", {\ar@{-}"5"},
"5", {\ar@{-}"7"},
"5", {\ar@{-}"11"},
"6", {\ar@{-}"11"},
"7", {\ar@{-}"8"},
"7", {\ar@{-}"11"},
"8", {\ar@{-}"9"},
"8", {\ar@{-}"10"},
"8", {\ar@{-}"11"},
"9", {\ar@{-}"10"},
"11", {\ar@{-}@/_5pc/"10"},
\end{xy}
$$

\subsubsection{}\label{sec9.4.3}
Let $Q$ be the 2-Kronecker quiver
$$
\xymatrix@-0.5pc{
1 \ar@<0.4ex>[d]\ar@<-0.4ex>[d]\\
2
}
$$
and let $\LL = \CQ{Q}$.
The following picture describes the quiver $\Gamma(\decirr^\sr(\LL))$. 
(For indecomposable strongly reduced components of the form $Z_{\bd,0}$ or
$Z_{0,\be_i}$ we just
display the vectors $\bd$ or $-\be_i$, respectively.)
$$
\xymatrix@-0.5pc{
&&\\
&&&&& {\bsm1\\1\esm} \ar@{-}@(ul,ur)[]\\
&&\\
\cdots \ar@{-}[r] & 
{\bsm 4\\3\esm} \ar@{-}[r]\ar@{-}@(ul,ur)[] &
{\bsm 3\\2\esm} \ar@{-}[r]\ar@{-}@(ul,ur)[] &
{\bsm 2\\1\esm} \ar@{-}[r]\ar@{-}@(ul,ur)[] &
{\bsm 1\\0\esm} \ar@{-}[r]\ar@{-}@(ul,ur)[] &
{\bsm 0\\-1\esm} \ar@{-}[r]\ar@{-}@(ul,ur)[] &
{\bsm -1\\0\esm} \ar@{-}[r]\ar@{-}@(ul,ur)[] &
{\bsm 0\\1\esm} \ar@{-}[r]\ar@{-}@(ul,ur)[] &
{\bsm 1\\2\esm} \ar@{-}[r]\ar@{-}@(ul,ur)[] & 
{\bsm 2\\3\esm} \ar@{-}[r]\ar@{-}@(ul,ur)[] &
{\bsm 3\\4\esm} \ar@{-}[r]\ar@{-}@(ul,ur)[] & 
\cdots
}
$$
Thus there is exactly one component cluster $\{ Z \}$ 
of cardinality one,
and there are infinitely many component clusters of cardinality two.
One can easily check that $E_\LL(Z,Z) = 0$, hence the loop
at $Z$, but $E_\LL(Z) \not= 0$.
Thus $\{ Z \}$ is not $E$-rigid.
The other component clusters are $E$-rigid.
The $CC$-cluster monomials are
$$
C_\LL(\bsm 0\\-1\esm)^aC_\LL(\bsm -1\\0\esm)^b, \;\;\;\;
C_\LL(\bsm i+1\\i\esm)^aC_\LL(\bsm i\\i-1\esm)^b, \;\;\;\;
C_\LL(\bsm i-1\\i\esm)^aC_\LL(\bsm i\\i+1\esm)^b \;\;\;\;
\text{and} \;\;\;\;
C_\LL(\bsm 1\\1\esm)^a
$$
where $a,b,i \ge 0$.

The set $\cB_\LL$ of generic Caldero-Chapoton functions
is just the set of $CC$-cluster monomials.
Recall from \cite{BFZ} that for any acyclic quiver $Q$ we have
$\cA_Q = \cA_Q^\up$.
In this case, $\cB_\LL$ 
is a $\C$-basis of $\cA_Q$, see \cite{GLSChamber}.

For acyclic quivers $Q$ of wild representation type and
$\LL = \CQ{Q}$,
the component graph $\Gamma(\decirr^\sr(\LL))$ 
will have vertices without loops.
For example, let $Q$ be the 3-Kronecker quiver
$$
\xymatrix@-0.5pc{
1 \ar@<0.8ex>[d]\ar@<-0.8ex>[d]\ar[d]\\
2
}
$$
and let $\LL = \CQ{Q}$.
Let
$$
\phi = \left(\bbm -1&3\\-3&8\ebm\right)
$$
be the Coxeter matrix of $\LL$.
For $k \ge 0$ define
\begin{align*}
\bp_{2k} := \phi^k\left(\bsm 0\\1\esm\right),\\
\bp_{2k+1} := \phi^k\left(\bsm 1\\3\esm\right),\\
\bq_{2k} := \phi^{-k}\left(\bsm 1\\0\esm\right),\\
\bq_{2k+1} := \phi^{-k}\left(\bsm 3\\1\esm\right).
\end{align*}
Set $\bp_{-1} := -\be_1$ and $\bq_{-1} := -\be_2$. 
One connected component of the component graph
$\Gamma(\decirr^\sr(\LL))$
looks as follows:
$$
\xymatrix@-0.6pc{
&&\\
\cdots \ar@{-}[r] & 
\bq_3 \ar@{-}[r]\ar@{-}@(ul,ur)[] &
\bq_2 \ar@{-}[r]\ar@{-}@(ul,ur)[] &
\bq_1 \ar@{-}[r]\ar@{-}@(ul,ur)[] &
\bq_0 \ar@{-}[r]\ar@{-}@(ul,ur)[] &
\bq_{-1} \ar@{-}[r]\ar@{-}@(ul,ur)[] &
\bp_{-1} \ar@{-}[r]\ar@{-}@(ul,ur)[] &
\bp_0 \ar@{-}[r]\ar@{-}@(ul,ur)[] &
\bp_1 \ar@{-}[r]\ar@{-}@(ul,ur)[] & 
\bp_2 \ar@{-}[r]\ar@{-}@(ul,ur)[] &
\bp_3 \ar@{-}[r]\ar@{-}@(ul,ur)[] & 
\cdots
}
$$
These are precisely the $E$-rigid vertices of $\Gamma(\decirr^\sr(\LL))$.

The set of Schur roots of $Q$ consists of \emph{real} and \emph{imaginary}
Schur roots.
The above picture shows the real Schur roots (and the vectors
$-\be_1$ and $-\be_2$).  
The set $R_{\rm im}^+$ of imaginary Schur roots 
consists of all dimension vectors
$\bd = (d_1,d_2) \in \N^2$ with $d_2 \not= 0$
such that
$$
(3 - \sqrt{5})/2 \le d_1/d_2 \le (3 + \sqrt{5})/2,
$$
see \cite[Section~3]{DW} and \cite[Section~6]{K}.
There is no edge between
$Z_{\bd,0}$ and any other vertex of $\Gamma(\decirr^\sr(\LL))$.
In particular, there is no loop at $Z_{\bd,0}$.

The $CC$-cluster monomials are
$$
C_\LL(\bq_{-1})^aC_\LL(\bp_{-1})^b, \;\;\;\;
C_\LL(\bp_{i-1})^aC_\LL(\bp_i)^b, \;\;\;\;
C_\LL(\bq_i)^aC_\LL(\bq_{i-1})^b \;\;\;\;
\text{and} \;\;\;\;
C_\LL(\bd)
$$
where $a,b,i \ge 0$ and $\bd \in R_{\rm im}^+$.
Again it follows from \cite{GLSChamber} that these $CC$-cluster
monomials form a $\C$-basis
of $\cA_Q$.
It remains a challenge to compute the exchange relations
between all \emph{neighbouring} $CC$-clusters.
For the $E$-rigid $CC$-clusters, the exchange relations are known
from the Fomin-Zelevinsky exchange relations arising from
mutations of clusters.
But for $\bd,\bd_1,\bd_2 \in R_{\rm im}^+$ and $i \ge -1$
it remains
an open problem to express the products
$$
C_\LL(\bd)C_\LL(\bp_i), \;\;\;\;
C_\LL(\bd)C_\LL(\bq_i) \;\;\;\; and \;\;\;\;
C_\LL(\bd_1)C_\LL(\bd_2)
$$
as linear combinations of elements from the basis $\cB_\LL$.

\bigskip
{\parindent0cm \bf Acknowledgements.}\,
We thank Ben Davison, Christof Gei{\ss}, Sven Meinhardt and Pierre-Guy Plamondon for helpful discussions.
We are indebted to Charlotte Ricke for carefully reading the first version
of this article and for pointing out some inaccuracies.



\begin{thebibliography}{999}


\bibitem[A]{A}
C. Amiot,
\emph{Cluster categories for algebras of global dimension $2$ 
and quivers with potential}, 
Ann. Inst. Fourier (Grenoble) 59 (2009), no. 6, 2525--2590.

\bibitem[AR]{AR}
M. Auslander, I. Reiten,
\emph{Modules determined by their composition factors},
Illinois J. Math. 29 (1985), no. 2, 280--301. 

\bibitem[ARS]{ARS}
M. Auslander, I. Reiten, S. Smal\o,
\emph{Representation theory of Artin algebras}, 
Corrected reprint of the 1995 original. Cambridge Studies in Advanced Mathematics, 36. Cambridge University Press, Cambridge, 1997. 
xiv+425 pp.

\bibitem[ASS]{ASS}
I. Assem, D. Simson, A. Skowro\'nski,
\emph{Elements of the representation theory of associative algebras}, 
Vol. 1. Techniques of representation theory. London Mathematical Society Student Texts, 65. Cambridge University Press, Cambridge, 2006. x+458 pp. 

\bibitem[BFZ]{BFZ}
A. Berenstein, S. Fomin, A. Zelevinsky, 
\emph{Cluster algebras. III. Upper bounds and double Bruhat cells}, 
Duke Math. J. 126 (2005), no. 1, 1--52. 

\bibitem[CC]{CC}
P. Caldero, F. Chapoton, 
\emph{Cluster algebras as Hall algebras of quiver representations}, 
Comment. Math. Helv. 81 (2006), no. 3, 595--616.

\bibitem[C]{C}
G. Cerulli Irelli,
\emph{Cluster Algebras of Type $A_2^{(1)}$}, 
Algebr. Represent. Theory 15 (2012), no. 5, 977--1021. 

Algebras and Representation Theory (to appear),
40pp.,	arXiv:0904.2543

\bibitem[CBS]{CBS}
W. Crawley-Boevey,  J. Schr\"oer, 
\emph{Irreducible components of varieties of modules}, 
J. Reine Angew. Math. 553 (2002), 201--220. 

\bibitem[DW]{DW}
H. Derksen, J. Weyman,
\emph{On the canonical decomposition of quiver representations}, 
Compositio Math. 133 (2002), no. 3, 245--265.

\bibitem[DWZ1]{DWZ1}
H. Derksen, J. Weyman, A. Zelevinsky,
\emph{Quivers with potentials and their representations. I. Mutations}, 
Selecta Math. (N.S.) 14 (2008), no. 1, 59--119. 

\bibitem[DWZ2]{DWZ2}
H. Derksen, J. Weyman, A. Zelevinsky,
\emph{Quivers with potentials and their representations II: applications to cluster algebras}, 
J. Amer. Math. Soc. 23 (2010), no. 3, 749--790.

\bibitem[FZ1]{FZ1}
S. Fomin, A. Zelevinsky, 
\emph{Cluster algebras. I. Foundations}, 
J. Amer. Math. Soc. 15 (2002), no. 2, 497--529.

\bibitem[FZ2]{FZ2}
S. Fomin, A. Zelevinsky, 
\emph{Cluster algebras. II. Finite type classification},
Invent. Math. 154 (2003), no. 1, 63--121. 

\bibitem[FK]{FK}
C. Fu, B. Keller, 
\emph{On cluster algebras with coefficients and $2$-Calabi-Yau categories}, 
Trans. Amer. Math. Soc. 362 (2010), no. 2, 859--895. 

\bibitem[G]{G}
P. Gabriel, 
\emph{Finite representation type is open}, 
Proceedings of the International Conference on Rep- resentations of Algebras (Carleton Univ., Ottawa, Ont., 1974), Paper No. 10, 23pp. Carleton Math. Lecture Notes, No. 9, Carleton Univ., Ottawa, Ont., 1974.

\bibitem[GLaS]{GLaS}
C. Gei{\ss}, D. Labardini-Fragoso, J. Schr\"oer,
Preprint in preparation.

\bibitem[GLS]{GLSChamber}
C. Gei{\ss}, B. Leclerc, J. Schr\"oer,
\emph{Generic bases for cluster algebras and the Chamber ansatz}, 
J. Amer. Math. Soc. 25 (2012), no. 1, 21--76.

\bibitem[GS]{GS}
C. Gei{\ss}, J. Schr\"oer,
\emph{Extension-orthogonal components of preprojective varieties},
Trans. Amer. Math. Soc. 357 (2005), no. 5, 1953--1962. 

\bibitem[IR]{IR}
O. Iyama, I. Reiten,
$\tau$-\emph{tilting modules},
Talk in Trondheim on 28.03.2012.

\bibitem[K]{K}
V. Kac,
\emph{Infinite root systems, representations of graphs and invariant theory. II}, 
J. Algebra 78 (1982), no. 1, 141--162.

\bibitem[La1]{La1}
D. Labardini-Fragoso, 
\emph{Quivers with potentials associated to triangulated surfaces}, Proc. Lond. Math. Soc. (3) 98 (2009), no. 3, 797--839.

\bibitem[La2]{La2}
D. Labardini-Fragoso,
\emph{Quivers with potentials associated to triangulated surfaces, Part II: Arc representations}, 
Preprint (2009), 52pp., arXiv:0909.4100v2 

\bibitem[MRZ]{MRZ}
R. Marsh, M. Reineke, A. Zelevinsky, 
\emph{Generalized associahedra via quiver representations}, 
Trans. Amer. Math. Soc. 355 (2003), no. 10, 4171--4186.

\bibitem[Pa]{Pa}
Y. Palu,
\emph{Cluster characters for $2$-Calabi-Yau triangulated categories}, 
Ann. Inst. Fourier (Grenoble) 58 (2008), no. 6, 2221--2248.

\bibitem[P1]{P1}
P.-G. Plamondon, 
\emph{Cluster algebras via cluster categories with infinite-dimensional morphism spaces}, 
Compos. Math. 147 (2011), 1921--1954.

\bibitem[P2]{P2}
P.-G. Plamondon,
\emph{Generic bases for cluster algebras from the cluster category},
Int. Math. Res. Notices (to appear),
35pp., arXiv:1111.4431

\bibitem[Sche]{Sche}
S. Scherotzke,
\emph{Generalized clusters for acyclic quivers},
Preprint (2012).

\bibitem[Scho]{Scho}
A. Schofield,
\emph{General representations of quivers}, 
Proc. London Math. Soc. (3) 65 (1992), no. 1, 46--64.

\bibitem[ZZ]{ZZ}
Y. Zhou, B. Zhu, 
\emph{Cluster algebras of type C via cluster tubes},
Preprint (2010), 20pp., 
arXiv:1008.3444
\end{thebibliography}
\end{document}